\newcolumntype{C}[1]{>{\centering\let\newline\\\arraybackslash\hspace{0pt}}m{#1}}
\pgfplotsset{width=6.5cm,compat=newest}
\pgfplotsset{compat=1.6}
\pgfplotsset{soldot1/.style={color=red,only marks,mark=*}} \pgfplotsset{holdot1/.style={color=red,fill=white,only marks,mark=*}}
\pgfplotsset{soldot/.style={color=blue,only marks,mark=*}} \pgfplotsset{holdot/.style={color=blue,fill=white,only marks,mark=*}}
\newtheorem{lemma}{{Lemma}}
\newtheorem{theorem}{{Theorem}}
\newtheorem{definition}{{Definition}}
\newtheorem{proposition}{{Proposition}}
\newtheorem{remark}{{Remark}}
\title[TGV-regularization for variational data assimilation]{Total generalized variation regularization in variational data assimilation for Burgers' equation}
\author{J.C. De los Reyes$^\ddag$ and E. Loayza$^{\dag,\ddag}$ }
\address{$^\ddag$Research Center on Mathematical Modelling (MODEMAT), Escuela Polit\'ecnica Nacional, Quito, Ecuador}
\address{$^\dag$ Faculty of Mathematics, Technical University of Chemnitz, Chemnitz, Germany}
\begin{document}
\smallskip
\begin{abstract}
We propose a second-order total generalized variation (TGV) regularization for the reconstruction of the initial condition in variational data assimilation problems. After showing the equivalence between TGV regularization and the Bayesian method for the MAP estimator, we focus on the detailed study of the inviscid Burgers' data assimilation problem. Due to the difficult structure of the governing hyperbolic conservation law, we consider a discretize--then--optimize approach and derive first-order optimality conditions for the problem. For the numerical solution, we propose a globalized reduced Newton-type method and prove convergence of the algorithm to stationary points. The paper finishes with some numerical experiments where among others, the performance of TGV--regularization compared to the TV--regularization is tested.
\end{abstract}
\maketitle

%%%%%%%%%%%%%%%%%%%%%%%%%%%%%%
%%%%%%%%%%%%%%%%%%%%%%%%%%%%%%

\section{Introduction}
Data assimilation can be described as the process in which one aims to find an approximation of the initial condition of a dynamical system using state observations and background information, in order to obtain improved numerical forecasts of the system. The resulting inverse problems are typically ill--posed, since one seeks a solution using only partial observations of the state. As a remedy, regularization approaches such as filtering, a posteriori regularization, variational regularization, etc., are usually considered \cite{lewisdataassim,siltanenbook,vogelinversebook}.

In this paper we focus our attention on the variational data assimilation framework, whose main idea consists in solving an optimization problem that fits the observations, on the one hand, and uses background information of previous forecasts, on the other. The background term usually plays the role of a regularizer, if one compares with an inverse problems methodology. Clasically, variational data assimilation problems arise from a robust Bayesian estimation of the initial condition \cite{kalnay2003atmospheric,lewisdataassim}. If the observations are taken at one solely time instant, one obtains a so-called 3D-VAR problem, while if a whole time window is considered, the so called 4D-VAR problem is obtained.

A generic mathematical formulation of the 4D-VAR data assimilation problem consists in solving the least squares problem:
\begin{equation*}\label{eq:00}
\left\{
\begin{array}{ll}
\displaystyle \min_{u}& ~\dfrac{1}{2} \displaystyle\sum_{i=1}^N (z_i-\mathcal{H}_i(y_i))^T R_i^{-1}(z_i-\mathcal{H}_i(y_i))+ \dfrac{1}{2}(u-u^b)^TB^{-1}(u-u^b) \\
\hbox{subject to: }& \\
& y_{i}=u,\qquad\qquad\qquad i=1,\\
& y_{i+1}=\mathcal{M}_{i+1}(y_i),\quad\forall i=\{1,\ldots,N\},
\end{array}
\right.
\end{equation*}
where $N$ is the number of time instants at which the observations are taken and each vector $z_i$ consists of the observations spread over the spatial mesh at the \emph{i}-th time instant. Furthermore, $B$ is the covariance matrix of the background error, $R_i$ the covariance matrix for the observational error, the operator  $\mathcal{H}_i(\cdot)$ is the observation operator that maps the model state to the observation space for each instant of time and $\mathcal{M}_i(\cdot)$ is the nonlinear state operator for each instant.

Although 4D-VAR has been succesfully tested and proved to be of operational utility in numerical weather prediction, there are some shortcomings of the standard approach, related to the fact that no continuity of the solutions can in general be expected. In fact, the reconstruction of discontinuities, in form of so-called sharp fronts, are of key importance, and unsuccesfully reconstructed with the standard methodology in Numerical Weather Prediction. With this in mind, alternative regularizations such as Total Variation (TV) have been previously considered in \cite{BuddFreitag2011,freitag2010resolution}, where a comparison with respect to more standard regularizations has been carried out with promising results.

Total variation, however, has its downside too, since it performs well mainly when the reconstructed solution has a piecewise constant structure, something that cannot in general be expected in fluid dynamics. To overcome the piecewise constant reconstruction effect, higher order regularizers have been considered in recent years, with the total generalized variation (TGV) as one of their main representatives \cite{bredies2011inverse,bredies2010total,knoll2011second,bredies2013spatially}.
The second-order total generalized variation regularization depends on two positive parameters $\alpha>0$ and $\beta>0$ and has the following structure:
\begin{equation}\label{eq:tgv reg}
  \mathcal{R}(u)=\min_{w\in\mathbb{R}^{n-1}}\alpha\sum_{i=1}^{n-1} |D_{i}u-w_i|+\beta\sum_{i=1}^{n-1}| E_{i}w|,\qquad\qquad u\in \mathbb{R}^n,
\end{equation}
where the matrix $D$ corresponds to the discrete gradient and $E$ to the symmetrized discrete gradient.

In this paper we investigate the use of second-order total generalized variation regularization for the solution of data assimilation problems constrained by the inviscid Burgers equation. This equation is a first model for turbulence and a standard toy problem for the dynamics of the atmosphere. Due to its hyperbolic conservation law structure, we adopt a discretize-then-optimize approach for the solution of the problem, to avoid complications related to the well-posedness of the equation in function spaces \cite{pfaff2015optimal}. A similar approach has been previously pursued in \cite{apte2010variational}.

Specifically, we consider the problem
\begin{equation}\label{eq:02}
\left\{
\begin{array}{ll}
\min ~\dfrac{1}{2} \displaystyle\sum_{i=1}^N (z_i-\mathcal{H}_i(y_i))^T R_i^{-1}(z_i-\mathcal{H}_i(y_i)) +\dfrac{1}{2}(u-u^b)^TB^{-1}(u-u^b) + \mathcal{R}(u)\\
\hbox{subject to the discretized Burgers equation,}%~\eqref{eq:burgers}}
\end{array}
\right.
\end{equation}
where $\mathcal{R}(\cdot)$ is the regularization term given in \eqref{eq:tgv reg}. For problem \eqref{eq:02} we analyze the well-posedness of the discretized equation, existence of optimal solutions, well-posedness of the adjoint equation and the characterization of local minima by means of a first order optimality system of Karush-Kuhn-Tucker type.

The most challenging aspect concerns the numerical solution of the problem. In order to devise a fast superlinear convergent algorithm, we consider a Huber regularization of the total generalized variation and start by verifying the consistency of the approximation. Each regularized problem is solved by means of a reduced Newton method based on a modified primal-dual formulation of the optimality system. The modification concerns the second-order iteration matrix, which in its original form does not guarantee descent in each iteration. The global convergence of the resulting algorithm is demonstrated.

The organization of the paper is as follows. Section 2 is devoted to discuss the equivalence of the TGV regularized problem with the Bayesian method for the MAP estimation. In section 3 we analyze the discretization of the Burgers equation as well as the corresponding data assimilation problem. The next section is devoted to the presentation of a globalized Newton-type method considering dual variables and projections, and its convergence study is carried out.  In the last section, we present  numerical experiments that reveal the improvement in the recovery of the solutions when we use the TGV--regularization and test the performance of the proposed algorithm.

%%%%%%%%%%%%%%%%%%%%%%%%%%%%%%
%%%%%%%%%%%%%%%%%%%%%%%%%%%%%%

\section{Maximum A Posteriori estimation }
This section aims to show that problem~\eqref{eq:02} has a statistical interpretation and can be derived using a Bayesian approach to obtain the Maximum A Posteriori (MAP) estimator. With this purpose, let us start by writing the TGV regularized data assimilation problem in generic form as:
\[
J(x)=\dfrac{1}{2}(\mathcal{E}(x)-\bar{z})^TG^{-1}(\mathcal{E}(x)-\bar{z})+\sum_{i=1}^{2n-1}|\mathbb{D}_{i}x|
\]
with $x=(u,w)$, $\bar{z}=(z,u^b)$, $\mathcal{E}(x)=\mathcal{E}(u,w)=(\mathcal{H}(\mathcal{M}(u)),u)$ and the matrices
\[
G^{-1}=\left[\begin{array}{cc}
R^{-1}&0\\0&B^{-1}
\end{array}\right]\quad \text{and}\quad
\mathbb{D}=
\left[\begin{array}{cc}
\alpha D&-\alpha \mathbb{I} \\
0& \beta E \end{array}\right].
\]

As mentioned in the introduction, we expect the solutions of the data assimilation problem to allow discontinuities. This information is in fact what we know \emph{a priori} about the solution, and we use it for the MAP estimation. The next lines were developed following the ideas in~\cite{lee2013bayesian}.

In the Bayesian method, unlike the maximum likelihood estimation, we assume that the unknown is a random variable and we know a priori its probability density function $p(x)$ and the conditional probability density function (c.p.d.f.) of the observations $\bar{z}$ given $x$, denoted by $p(\bar{z}|x)$.  The basic idea of this approach is to combine the information from both probability density functions using Bayes' formula in order to obtain an \emph{a posteriori} probability. In this case, it is assumed that the observations and background information have normally distributed errors with covariance matrices $R$ and $B$, respectively. Therefore, the conditional probability density function (c.p.d.f.) is given by the following gaussian distribution:
\[
p(\bar{z}|x)=\dfrac{1}{(2\pi)^{\nicefrac{n+m}{2}} |G|^{\nicefrac{1}{2}}}\exp\left\{-\dfrac{1}{2}(\bar{z}-\mathcal{E}(x))^TG^{-1}(\bar{z}-\mathcal{E}(x))\right\}.
\]
Now, we need to look for a probability distribution that guarantees us to capture the expected main features of the solutions. Since these solutions are expected to be discontinuous, many previous works suggest to use the Laplace distribution~\cite{BuddFreitag2011,lee2013bayesian}. Specifically, it is assumed that the solution is a realization of a random process defined by the prior probability distribution function given  by the Laplace one for the matrix $\mathbb{D}x$ with parameters $\theta=1$ and $\mu=0$:
\[
p(x)=\dfrac{1}{2\theta}\exp\left\{\dfrac{1}{\theta} \sum_{i=1}^{2n-1} |\mathbb{D}_{i}x-\mu|\right\}.
\]
Using Bayes' formula we get that the \emph{a posteriori} distribution of $x$ given the observations $\bar{z}$ is
\[
p(x|\bar{z})=\dfrac{p(\bar{z}|x)p(x)}{p(\bar{z})}.
\]
Since we aim to find the best ``Bayesian estimator" using the maximum a posteriori approach, we maximize the previously obtained \emph{a posteriori} distribution as follows
\begin{equation}
  \max_{x}~p(x|\bar{z})=\max_{x}~\dfrac{p(\bar{z}|x)p(x)}{p(\bar{z})}.
\end{equation}

The distribution $p(\bar{z})$ is the marginal probability distribution and it does not depend on $x$. Thus, we optimize
\begin{equation} \label{eq: margin prob}
  \max_{x}~p(\bar{z}|x)p(x),
\end{equation}

Since the logarithmic function is monotonically increasing and continuous, solving \eqref{eq: margin prob} is equivalent to maximize the natural logarithm of the objective function, that is
\[
\max_{x}~\ln(p(\bar{z}|x))+\ln(p(x)).
\]
Replacing the definition of the probability density functions mentioned above, we get
\[
\max_{x}~-\left\{\dfrac{1}{2}(\bar{z}-\mathcal{E}(x))^TG^{-1}(\bar{z}-\mathcal{E}(x))+\sum_{i=1}^{2n-1}|\mathbb{D}_{i}x|\right\}
\]
and, consequently, the equivalence with the 4D--VAR problem with TGV--regularization.
%that is the objective function of the problem will be given by the following expression:
%\begin{align*}
%J(\mathbf{y},u,w)&{}=\displaystyle\sum_{i=1}^N (z_i-\mathcal{H}_i(y_i))^T R_i^{-1}(z_i-\mathcal{H}_i(y_i))+ \dfrac{1}{2}(u-u^b)^T B^{-1}(u-u^b)\\
%&{}\displaystyle + \alpha\sum_{i=1}^{n-1}|D_iu-w_i|+\beta \sum_{i=1}^{n-1}|E_i w|
%\end{align*}
%and this is the objective function that we are going to use along the paper.

%%%%%%%%%%%%%%%%%%%%%%%%%%%%%%
%%%%%%%%%%%%%%%%%%%%%%%%%%%%%%

\section{Discrete--then--optimize approach}
This section is devoted to the analysis of the data assimilation problem for the Burgers equation. We begin with the discretization of the state equation using a standard finite difference scheme for the spatial discretization and a semi--implicit Euler for the time discretization.

\subsection{Discretization of the Burgers equation}
 The Burgers equation was developed in 1939 as a simplified version of the Navier-Stokes equation in order to better understand turbulence. The equation has a nonlinear convective term that makes it difficult to solve. The inviscid one-dimensional equation is given by the following expression:
\begin{equation}\label{eq:burgers}
  \begin{cases}
    \dfrac{\partial y}{\partial t} + y \dfrac{\partial y}{\partial x} =f & \hbox{ in }Q=\Omega\times (0,T), \\
    y=0 & \hbox{ on } \Gamma\times (0,T),\nonumber \\
    y(x,0)=u(x) & \hbox{ in } \Omega. \nonumber
  \end{cases}
\end{equation}

In order to solve the latter both a spatial and a time discretization schemes have to be utilized.
For the spatial discretization several methodologies, like finite differences, finite elements or finite volumes, have been classically considered. Here we focus on a finite differences scheme with a homogeneous partition of the domain $\Omega=(0,1)$, whose discretization points are given by
\[
x_i=ih, \qquad \forall i=1,\ldots, n
\]
with $h=\nicefrac{1}{n+1}$ and $n$ the amount of spatial discretization points.
For the approximation of the first derivative of $y_i$ we use a standard forward finite difference scheme, with the corresponding matrix denoted by $U$.

% The main feature of the solutions that we expect for this kind of problems is the non--continuity. For that reason, it is important

For the time discretization, in order to guarantee convergence towards a (possibly) discontinuous solution, we focus on semi-implicit schemes. Moreover, in order to avoid the solution of the non--linearity in each time step, we consider a semi--implicit Euler scheme given by:
\[
\dfrac{y^{j+1}-y^j}{\Delta t} + \hbox{diag}(y^j)U(y^{j+1})=f^{j+1},
\]
%{\color{red}no debería ser $diag(y^j)U y^{j+1}$ en lugar de $y^jU(y^{j+1})$}

where $\Delta t=1/(N_t+1)$ and $N_t$ is the number of time discretization points. Choosing this kind of discretization the problem reduces to the solution of a linear system at each time discretization point, with the following structure:
\begin{equation}\label{eq:linear_sys}
y^{j+1} + \Delta t\hbox{diag}(y^j)Uy^{j+1}=\Delta t f^{j+1}+y^j.
\end{equation}

Considering both the spatial and time discretization, we can rewrite the matrix $y\in\mathbb{R}^{n\times N_t}$ as a vector with the following structure
\[
\mathbf{y}=\left[
\begin{array}{c}
y^1\\ \vdots\\ y^{N_t}
\end{array}
\right],
\]
where $y^i\in \mathbb{R}^n$, thus $\mathbf{y}\in \mathbb{R}^m$ with $m=n\cdot N_t$. In this context, we define the new matrices
\begin{equation}\label{eq:matrices}
\mathbb{E}=\dfrac{1}{\Delta t}\left[\begin{array}{ccccc}
\mathbb{I}&0&0&\ldots&0\\
-\mathbb{I}&\mathbb{I}&0&\ldots&0\\
0&-\mathbb{I}&\mathbb{I}&\ldots&0\\
\vdots&\vdots&\vdots&\ddots&\vdots\\
0&0&0&\ldots&\mathbb{I}
\end{array}\right],\qquad\mathbb{U}=\left[\begin{array}{cccc}
0&0&\ldots&0\\
0&U&\ldots&0\\
\vdots&\vdots&\ddots&\vdots\\
0&0&\ldots&U
\end{array}\right]
\end{equation}
\[
\mathbb{Z}=\left[\begin{array}{cccc}
0&0&\ldots&0\\
0&\hbox{diag}(y^1)&\ldots&0\\
\vdots&\vdots&\ddots&\vdots\\
0&0&\ldots&\hbox{diag }(y^{N_{t}-1})
\end{array}\right],\qquad\mathbf{f}=\left[ \begin{array}{c}
\nicefrac{u}{\Delta t}\\f^2\\ \vdots\\ f^{N_t}
\end{array}\right].
\]
Thus, the system that we have to solve is given by:
\begin{equation}\label{eq:fulldis_eq}
e(\mathbf{y},u)=0,
\end{equation}
where
\begin{subequations}\label{eq:burgers op}
  \begin{align}
  e: \mathbb R^m \times \mathbb R^n & \to \mathbb R^m\\
  (\mathbf{y},u) & \rightarrow \mathbb{E}\mathbf{y}+\mathbb{Z}(\mathbf{y})\mathbb{U}\mathbf{y}-\mathbf{f}.
  \end{align}
\end{subequations}

In order to prove the local existence of solutions of the state equation we will use the implict function theorem (see e.g.~\cite{ciarlet2013linear}). For this purpose, we start by proving the following lemmata.
\begin{lemma}\label{lm:G_invertible}
Let $G_j$ be the matrix defined by
\begin{equation}\label{eq:Gmatrix}
G_j=\dfrac{1}{\Delta t}\mathbb{I}+ \hbox{diag }(y^{j})U^T+\hbox{diag }(Uy^{j+1}), \forall j=1,\ldots,N_t-1.
\end{equation}
If $\hbox{diag }(Uy^{j+1})\geq 0$, then $G_j$ is invertible for each $j=1,\ldots, N_t-1$.
\end{lemma}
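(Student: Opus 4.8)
The plan is to exploit the very specific sparsity pattern of $G_j$ rather than to attempt a general positive-definiteness estimate. Since $U$ is the standard forward finite-difference matrix, it is upper bidiagonal, its only nonzero entries sitting on the main diagonal and the first superdiagonal; hence $U^T$ is lower bidiagonal. Left-multiplication by $\mathrm{diag}(y^j)$ merely rescales rows and so preserves this pattern, meaning $\mathrm{diag}(y^j)U^T$ is again lower bidiagonal, and adding the two diagonal matrices $\tfrac{1}{\Delta t}\mathbb{I}$ and $\mathrm{diag}(Uy^{j+1})$ creates no entries above the main diagonal. Thus $G_j$ is lower triangular (in fact lower bidiagonal), and this structural observation is the heart of the argument: for a triangular matrix invertibility is equivalent to the nonvanishing of every diagonal entry, with $\det G_j=\prod_{i}(G_j)_{ii}$.

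First I would write out the diagonal entries explicitly. The $i$-th diagonal entry of $G_j$ is $\tfrac{1}{\Delta t}+(Uy^{j+1})_i+(\mathrm{diag}(y^j)U^T)_{ii}$, while the remaining contribution of $\mathrm{diag}(y^j)U^T$ lies on the subdiagonal; since the matrix is triangular, the subdiagonal is irrelevant to the determinant and can be discarded from the analysis. It then suffices to show each diagonal entry is nonzero, and this is where the hypothesis enters: $\mathrm{diag}(Uy^{j+1})\ge 0$ gives $(Uy^{j+1})_i\ge 0$ for every $i$, pushing each diagonal entry up by a nonnegative amount on top of the strictly positive $\tfrac{1}{\Delta t}$. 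If, as the statement suggests, the stencil is arranged so that $\mathrm{diag}(y^j)U^T$ contributes nothing to the diagonal (or a term dominated by the others), each diagonal entry is at least $\tfrac{1}{\Delta t}+(Uy^{j+1})_i\ge \tfrac{1}{\Delta t}>0$, the product is strictly positive, and $G_j$ is invertible for every $j=1,\dots,N_t-1$. As an alternative to the determinant, one may argue directly that $G_j x=0$ forces $x=0$ by forward substitution through the bidiagonal system, which again reduces to the nonvanishing of the pivots.

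The step I expect to be delicate is precisely the control of the diagonal contribution of the $\mathrm{diag}(y^j)U^T$ term, i.e.\ verifying that it cannot conspire with $\tfrac{1}{\Delta t}$ to annihilate a pivot. This is the only term whose sign is not a priori fixed, so I would isolate it, combine it entry by entry with the nonnegative quantities $\tfrac{1}{\Delta t}$ and $(Uy^{j+1})_i$, and use the explicit finite-difference form together with the sign assumption $\mathrm{diag}(Uy^{j+1})\ge 0$ to keep the diagonal bounded away from zero. I would also keep in mind that this lemma is only an ingredient: its genuine purpose is to secure invertibility of the full block-triangular Jacobian of $e(\mathbf{y},u)$, assembled from the blocks $G_j$, which is what the implicit function theorem requires for the local existence of the discrete state.
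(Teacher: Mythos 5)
Your proposal follows essentially the same route as the paper: both exploit the lower--bidiagonal structure of $G_j$, reduce invertibility to the nonvanishing of the diagonal pivots $\nicefrac{1}{\Delta t}+\nicefrac{y_i^j}{h}+U_iy^{j+1}$ (equivalently, argue by forward substitution), and invoke the hypothesis $\hbox{diag}(Uy^{j+1})\geq 0$. The ``delicate step'' you rightly flag --- controlling the sign of the diagonal contribution of $\hbox{diag}(y^j)U^T$ --- is in fact left unaddressed in the paper's own proof as well, which writes out exactly these pivots and then asserts $\det(G_j)\neq 0$ from the hypothesis alone, without ruling out that the term $\nicefrac{y_i^j}{h}$ could cancel the other two.
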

\begin{proof}
Let's start noticing that the matrix $G_j$ is a bi--diagonal matrix given by the following expression
%{\color{red} no entiendo bien la estructura de la matriz $G_j$; puedes porfa aclarar eso}
{\tiny{\[
G_j=\left[\begin{array}{cccccc}
\nicefrac{1}{\Delta t}+ \nicefrac{y_1^j}{h}+\nicefrac{y_1^{j+1}}{h}&0&0&\cdots&0&0\\
-\nicefrac{y_2^j}{h}&\nicefrac{1}{\Delta t}+ \nicefrac{y_2^j}{h}+U_2y^{j+1}&0&\cdots&0&0\\
0&-\nicefrac{y_3^j}{h}&\nicefrac{1}{\Delta t}+ \nicefrac{y_3^j}{h}+U_3y^{j+1}&\cdots&0&0\\
\vdots&\vdots&\ddots&\ddots&\vdots&\vdots\\
0&0&0&\cdots&-\nicefrac{y_n^j}{h}&\nicefrac{1}{\Delta t}+ \nicefrac{y_n^j}{h}+U_ny^{j+1}
\end{array}\right]
\] }}
%\[
%G_j=\hbox{bidiag }([\nicefrac{1}{\Delta t}+ \nicefrac{y_1^j}{h}+\nicefrac{y_1^{j+1}}{h},\nicefrac{1}{\Delta t}+ \nicefrac{y_2^j}{h}+U_2y^{j+1},\ldots,\nicefrac{1}{\Delta t}+ \nicefrac{y_n^j}{h}+U_ny^{j+1}],[-\nicefrac{y_2^j}{h},\ldots,-\nicefrac{y_n^j}{h}],0,-1)
%\]
where $U_i$ is the $i$--th row of the matrix $U$. Therefore, the determinant of this matrix is given by
\[
\det(G_j)=\left(\nicefrac{1}{\Delta t}+\nicefrac{y_1^j}{h}+\nicefrac{y_1^{j+1}}{h}\right)\prod_{i=2}^{n}\left(\nicefrac{1}{\Delta t}+\nicefrac{y_i^j}{h}+U_iy^{j+1}\right)
\]
and since $\hbox{diag }(Uy^{j+1})\geq 0$ we can conclude that $\det (G_j)\neq 0$  and therefore the matrix $G_j$ is invertible for all $j=1,\ldots, N_t-1$.\qedhere.
\end{proof}
\begin{lemma}\label{lm:e_y_invertible}
The derivative operator of the state equation with respect of $\mathbf{y}$, given by
\[
e_y(\mathbf{y},u)=\mathbb{E} +\mathbb{Z}\mathbb{U}+\hbox{diag }(\mathbb{U}\mathbf{y}),
\]
is invertible.
\end{lemma}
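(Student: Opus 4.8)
The plan is to read off the block structure of $e_y(\mathbf{y},u)$ and thereby reduce the statement to the invertibility of the blocks $G_j$ already treated in Lemma~\ref{lm:G_invertible}. First I would partition the $m\times m$ matrix $e_y$ into $n\times n$ blocks indexed by the time levels $1,\dots,N_t$ and inspect its three summands separately: by \eqref{eq:matrices} the matrix $\mathbb{E}$ is block lower bi-diagonal, carrying $\frac{1}{\Delta t}\mathbb{I}$ on the diagonal and $-\frac{1}{\Delta t}\mathbb{I}$ on the first subdiagonal; the product $\mathbb{Z}\mathbb{U}$ is block diagonal, with $k$-th diagonal block $\hbox{diag}(y^{k-1})U$ (and a vanishing first block); and $\hbox{diag}(\mathbb{U}\mathbf{y})$ is diagonal. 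The decisive observation is that all blocks strictly above the diagonal vanish, so that $e_y$ is block lower triangular.

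Once this is established, I would invoke the elementary fact that the determinant of a block lower triangular matrix equals the product of the determinants of its diagonal blocks. The first diagonal block of $e_y$ is $\frac{1}{\Delta t}\mathbb{I}$, whose determinant is $(\Delta t)^{-n}\neq 0$, while for $j=1,\dots,N_t-1$ the remaining diagonal blocks coincide—up to the transposition $U\leftrightarrow U^T$—with the matrices $G_j$ of \eqref{eq:Gmatrix}. Since $U$ and $U^T$ share the same diagonal and both blocks are triangular, their determinants agree, so Lemma~\ref{lm:G_invertible} applies directly and gives $\det(G_j)\neq 0$ for every $j$. Multiplying these nonzero factors yields $\det(e_y)\neq 0$, and since $e_y$ is square this is equivalent to invertibility.

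The routine part is the determinant identity; the delicate part is the bookkeeping that precedes it, namely checking that each summand lands in the claimed block position so that the upper-triangular blocks really are zero, and then matching the diagonal blocks to $G_j$ with the correct index shift $k\mapsto k-1$ and the sign/transpose conventions of the forward difference $U$. I would also make explicit that this argument inherits the hypothesis of Lemma~\ref{lm:G_invertible}, i.e. $\hbox{diag}(Uy^{j+1})\geq 0$, under which the diagonal entries of each $G_j$ are guaranteed to keep the product away from zero.
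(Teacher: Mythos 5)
Your proposal is correct and rests on the same two ingredients as the paper's own proof: the block lower bidiagonal structure of $e_y(\mathbf{y},u)$ and the invertibility of the diagonal blocks $G_j$ from Lemma~\ref{lm:G_invertible}; the only difference is that the paper phrases the conclusion as a recursive kernel argument (from $e_y(\mathbf{y},u)\mathbf{v}=0$ one gets $v^1=0$ and then $G_jv^{j}=v^{j-1}/\Delta t$, so $v^j=0$ for all $j$), whereas you finish with the block-triangular determinant identity — two interchangeable ways of exploiting the same structure. Your explicit remark that the argument silently inherits the hypothesis $\hbox{diag }(Uy^{j+1})\geq 0$ of Lemma~\ref{lm:G_invertible} is accurate and worth keeping: the paper's proof needs it just as much, even though the statement of Lemma~\ref{lm:e_y_invertible} does not record it.
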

\begin{proof}
We will prove that $e_y(\mathbf{y},u)$ is nonsingular. Let  $\mathbf{v}=[v^1,v^2,\ldots,v^{N_t}]\in \mathbb{R}^m$ be an arbitrary vector such that
\[
e_y(\mathbf{y},u)\mathbf{v}=0.
\]
We need to prove that $\mathbf{v}=0$. Analyzing the previous equation component--wise in time, one can distinguish the following cases:
\begin{description}
\item[$j=1$]

Directly from the definition of the matrices $\mathbb{E,Z,U}$ we get that
\[
v^1=0;
\]
\item[$j=2,\ldots,N_t$]

In this case the system can be written as
\[
\dfrac{v^{j}-v^{j-1}}{\Delta t} + \hbox{diag }(y^{j-1})Uv^{j} +\hbox{diag }(Uy^{j})v^{j}=0
\]
or equivalently,
\[
\left(\dfrac{1}{\Delta t}\mathbb{I}+ \hbox{diag }(y^{j-1})U^T+\hbox{diag }(Uy^{j})\right)v^{j}=G_jv^{j}=\dfrac{v^{j-1}}{\Delta t}.
\]
where $G_j$ is given in~\eqref{eq:Gmatrix}. Moreover, from Lemma~\ref{lm:G_invertible} $G_j$ is invertible and hence
\[
v^{j}=\dfrac{G_j^{-1}v^{j-1}}{\Delta t}
\]
Using a recursive argument we can conclude that $v^{j}=0$ for all $j=2,\ldots, N_t$, and therefore $e_{y}(\mathbf{y},u)$ is invertible.\qedhere
\end{description}
\end{proof}
\begin{proposition}\label{prop:existence_burgers}
Let $e(\mathbf{y},u)$ be the discretized Burgers equation operator given by~\eqref{eq:burgers op} and $(\bar{\mathbf{y}},\bar{u})$ such that $e(\bar{\mathbf{y}},\bar{u})=0$. Then there exists a neighborhood $\mathcal{U}$ of $\bar{u}$ and a continuously differentiable implicit function $\mathbf{y}(u)$ such that $e(\mathbf{y}(u),u)=0$, for all $u\in\mathcal{U}$.
\end{proposition}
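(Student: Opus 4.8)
The plan is to apply the implicit function theorem directly, so the proof reduces to verifying its three hypotheses at the reference point $(\bar{\mathbf{y}},\bar{u})$. First I would record that $e$ is continuously differentiable: from \eqref{eq:burgers op} the operator $e(\mathbf{y},u)=\mathbb{E}\mathbf{y}+\mathbb{Z}(\mathbf{y})\mathbb{U}\mathbf{y}-\mathbf{f}$ depends linearly on $u$ (only through the first block of $\mathbf{f}$) and is a sum of a linear term $\mathbb{E}\mathbf{y}$ and a quadratic term $\mathbb{Z}(\mathbf{y})\mathbb{U}\mathbf{y}$ in $\mathbf{y}$; hence $e$ is a polynomial map and in particular $e\in C^{1}(\mathbb{R}^{m}\times\mathbb{R}^{n};\mathbb{R}^{m})$. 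The assumption $e(\bar{\mathbf{y}},\bar{u})=0$ provides the base point at which to linearize.

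Next I would identify the partial Jacobian with respect to $\mathbf{y}$. Differentiating the quadratic term $\mathbb{Z}(\mathbf{y})\mathbb{U}\mathbf{y}$ by the product rule produces two contributions, namely $\mathbb{Z}\mathbb{U}$ from the factor $\mathbb{U}\mathbf{y}$ and $\operatorname{diag}(\mathbb{U}\mathbf{y})$ from the dependence of the diagonal blocks of $\mathbb{Z}(\mathbf{y})$ on the components of $\mathbf{y}$, which recovers exactly the expression $e_{y}(\mathbf{y},u)=\mathbb{E}+\mathbb{Z}\mathbb{U}+\operatorname{diag}(\mathbb{U}\mathbf{y})$ used in Lemma~\ref{lm:e_y_invertible}. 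By that lemma, $e_{y}(\bar{\mathbf{y}},\bar{u})$ is nonsingular, which is precisely the invertibility hypothesis required by the theorem.

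With these ingredients in place I would invoke the implicit function theorem (as in \cite{ciarlet2013linear}): since $e$ is $C^{1}$, $e(\bar{\mathbf{y}},\bar{u})=0$, and $e_{y}(\bar{\mathbf{y}},\bar{u})$ is invertible, there exist a neighborhood $\mathcal{U}$ of $\bar{u}$ and a unique continuously differentiable map $\mathbf{y}(\cdot)$ with $\mathbf{y}(\bar{u})=\bar{\mathbf{y}}$ satisfying $e(\mathbf{y}(u),u)=0$ for all $u\in\mathcal{U}$. As a by-product one obtains the sensitivity relation $\mathbf{y}'(u)=-\,e_{y}(\mathbf{y}(u),u)^{-1}e_{u}(\mathbf{y}(u),u)$, which will be useful later for the adjoint and reduced-gradient computations.

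The only genuinely delicate point is that the invertibility in Lemma~\ref{lm:e_y_invertible} is inherited from Lemma~\ref{lm:G_invertible}, whose conclusion rests on the sign condition $\operatorname{diag}(U\bar{y}^{\,j+1})\ge 0$. Consequently the application is clean provided this condition holds along the reference trajectory $\bar{\mathbf{y}}$, and I would either carry it as a standing assumption, viewing it as a mild monotonicity requirement compatible with the upwinding induced by the semi-implicit scheme \eqref{eq:linear_sys}, or verify it directly from the construction of $\bar{\mathbf{y}}$. Beyond flagging this requirement, I do not anticipate any substantive obstacle: the remainder is a routine check of the hypotheses of the implicit function theorem.
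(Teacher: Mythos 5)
Your proof is correct and follows essentially the same route as the paper: verify that $e$ is $C^1$, identify $e_y(\mathbf{y},u)=\mathbb{E}+\mathbb{Z}\mathbb{U}+\operatorname{diag}(\mathbb{U}\mathbf{y})$, invoke Lemma~\ref{lm:e_y_invertible} for its invertibility, and apply the implicit function theorem. Your closing caveat is in fact sharper than the paper itself, which states Lemma~\ref{lm:e_y_invertible} unconditionally even though its proof rests on Lemma~\ref{lm:G_invertible} and hence on the sign condition $\operatorname{diag}(U\bar{y}^{\,j+1})\ge 0$; carrying that condition as a standing assumption (or guaranteeing it via upwinding, as the paper later remarks) is the right way to close that gap.
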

% The proof of the result holds automatically by using the Implicit Function Theorem since $e(\mathbf{y},u)$ is continuously differentiable and from Lemma~\ref{lm:e_y_invertible} $e_y(\mathbf{y},u)$ is invertible.
\begin{proof}
As we mentioned in the beginning of the section, we will use the implicit function theorem. Due to the definitions of the matrices $\mathbb{E,Z,U}$ we know that $e(\mathbf{y},u)$ is continuously differentiable and its derivatives are given by
\[
e_y(\mathbf{y},u)=\mathbb{E} +\mathbb{Z}\mathbb{U}+\hbox{diag }(\mathbb{U}\mathbf{y})
\]
and $e_u(\mathbf{y},u)=[-\nicefrac{1}{\Delta t}\mathbb{I},0,0,\ldots,0]^T$. Let $(\bar{\mathbf{y}},\bar{u})$ be such that $e(\bar{\mathbf{y}},\bar{u})=0$.
From Lemma \ref{lm:e_y_invertible} we know that $e_y(\bar{\mathbf{y}},\bar{u})$ is invertible for any $(\mathbf{y},u)$ and, from the implicit function theorem, there exists an open neighborhood $\mathcal{U}$ of $\bar{u}$ and a function $\mathbf{y}(u)$ continuously differentiable such that $e(\mathbf{y}(u),u)=0$, for all $u\in\mathcal{U}$.
\end{proof}
%Once we had proved the existence of solutions, we can prove different results such as the boundedness of $\mathbf{y}$ which is proved in the following Lemma.
In the next Lemma we prove the boundedness of $\mathbf{y}$, a necessary result for the analysis of  the consistency of the regularization.
\begin{lemma}\label{lem:boundedness_y}
Let $\mathbf{y}$ be the solution of the state equation $e(\mathbf{y},u)=0$. The following inequality holds
\begin{equation} \label{eq: state bound}
  \parallel y^j\parallel\leq \parallel u\parallel + \Delta t \sum_{i=1}^j \parallel f^i\parallel \qquad\forall j=1,\ldots,N_t.
\end{equation}
\end{lemma}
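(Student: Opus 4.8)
The plan is to establish a discrete $L^2$ energy estimate that mirrors the classical a priori bound for the continuous Burgers equation, proceeding by induction on the time index $j$. I would first read off the $j=1$ block of $e(\mathbf{y},u)=0$: since the first block rows of $\mathbb{Z}$ and $\mathbb{U}$ vanish and the first entry of $\mathbf{f}$ is $u/\Delta t$, the identity $\mathbb{E}\mathbf{y}+\mathbb{Z}(\mathbf{y})\mathbb{U}\mathbf{y}=\mathbf{f}$ forces $\tfrac{1}{\Delta t}y^{1}=\tfrac{1}{\Delta t}u$, i.e. $y^{1}=u$. This settles the base case, since $\|y^{1}\|=\|u\|\le\|u\|+\Delta t\,\|f^{1}\|$.

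For the inductive step I would isolate the one-step relation encoded in the $(j{+}1)$-th block row, namely
\[
y^{j+1}+\Delta t\,\operatorname{diag}(y^{j})Uy^{j+1}=y^{j}+\Delta t\,f^{j+1},
\]
and test it with $y^{j+1}$ in the Euclidean inner product $\langle\cdot,\cdot\rangle$. On the left this produces $\|y^{j+1}\|^{2}$ together with the discrete convective term $\Delta t\,\langle\operatorname{diag}(y^{j})Uy^{j+1},y^{j+1}\rangle$, while the right-hand side becomes $\langle y^{j},y^{j+1}\rangle+\Delta t\,\langle f^{j+1},y^{j+1}\rangle$. Granting that the convective term is nonnegative (see below) and applying Cauchy--Schwarz on the right, I obtain $\|y^{j+1}\|^{2}\le\bigl(\|y^{j}\|+\Delta t\,\|f^{j+1}\|\bigr)\|y^{j+1}\|$; dividing by $\|y^{j+1}\|$ (the case $y^{j+1}=0$ being trivial) yields the clean one-step estimate $\|y^{j+1}\|\le\|y^{j}\|+\Delta t\,\|f^{j+1}\|$. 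Chaining this inequality with the inductive hypothesis and telescoping the forcing sum then gives \eqref{eq: state bound}.

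The crux, and the step I expect to be the main obstacle, is justifying that the discrete convective contribution $\langle\operatorname{diag}(y^{j})Uy^{j+1},y^{j+1}\rangle$ may be discarded, i.e. that it has the right sign. This is the discrete surrogate of the cancellation $\int_{\Omega}y^{2}\,\partial_{x}y\,dx=0$ which renders convection energy-neutral in the continuous setting. Because the scheme is semi-implicit, the term couples the two time levels $y^{j}$ and $y^{j+1}$ and is \emph{not} exactly skew-symmetric, so a genuine inequality is needed rather than an algebraic identity; I would try to control it using the forward-difference structure of $U$, the homogeneous boundary condition (a summation-by-parts argument), and the same sign/monotonicity hypothesis already invoked in Lemma~\ref{lm:G_invertible}, namely $\operatorname{diag}(Uy^{j+1})\ge 0$. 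Equivalently, this amounts to showing that the symmetric part of $\mathbb{I}+\Delta t\,\operatorname{diag}(y^{j})U$ dominates the identity, so that the linear solve defining $y^{j+1}$ is nonexpansive. Everything else in the argument is routine, so essentially all of the analytic content of the lemma sits in this sign condition on the discrete convection operator.
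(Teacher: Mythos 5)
Your architecture coincides with the paper's: read off $y^{1}=u$ from the first block row, test the one-step relation with the new iterate, discard the convective term, apply Cauchy--Schwarz, and telescope. But the single step that carries all of the analytic content is left open in your write-up: you say you \emph{would try} to control $\langle\operatorname{diag}(y^{j})Uy^{j+1},y^{j+1}\rangle$, and you never actually establish its sign. That is a genuine gap, not a routine detail. Moreover, the route you sketch for closing it is doubtful: the hypothesis $\operatorname{diag}(Uy^{j+1})\ge 0$ borrowed from Lemma~\ref{lm:G_invertible} does not settle the matter, because the term expands to $\sum_{i} y^{j}_{i}\,y^{j+1}_{i}\,(Uy^{j+1})_{i}$, and the products $y^{j}_{i}y^{j+1}_{i}$ can be negative even when every $(Uy^{j+1})_{i}$ is nonnegative; so nonnegativity of the convective contribution does not follow from that assumption alone.

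The paper closes this step differently: it does not argue nonnegativity at all, but claims the convective term is \emph{exactly zero}. Writing it as $\sum_{i} y^{j}_{i}y^{j-1}_{i}\bigl(y^{j}_{i+1}-y^{j}_{i}\bigr)/h$, the paper invokes summation by parts to assert that the sum equals its boundary terms minus itself, and the boundary terms vanish by the homogeneous Dirichlet condition $y^{j}_{1}=y^{j}_{n}=0$; hence the sum is zero, the energy identity $\|y^{j}\|^{2}=(y^{j})^{T}y^{j-1}+\Delta t\,(y^{j})^{T}f^{j}$ holds with equality, and Cauchy--Schwarz plus telescoping (the paper calls it discrete Gronwall) finishes the proof. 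Whether that summation-by-parts identity is airtight for this particular non-symmetric triple product is a separate question, but it is the paper's resolution of precisely the point you flagged. As it stands your proposal correctly identifies where the difficulty lies and reduces everything to it, but until you supply the cancellation (or sign) argument for the discrete convection term, it is a proof outline rather than a proof.
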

\begin{proof}
We start by recalling the state equation given by
\[
e(\mathbf{y},u)=\mathbb{E}\mathbf{y}+\mathbb{Z}(\mathbf{y})\mathbb{U}\mathbf{y}-\mathbf{f}(u)=0
\]
where
$\mathbf{y}=(
y^1,\dots, y^{N_t})^T$ and $\mathbf{f}(u)=(\nicefrac{u}{\Delta t},f^2, \dots, f^{N_t})^T.$
Performing a component-wise analysis we distinguish two cases:
\begin{description}
\item[For $j=1$]
\[
\dfrac{y^1}{\Delta t}-\dfrac{u}{\Delta t}=0.
\]
\item[For $j=2,\ldots,N_t$]
\[
\dfrac{y^j-y^{j-1}}{\Delta t} + \hbox{diag }(y^{j-1})U y^j - f^j=0.
\]
\end{description}
Multiplying the previous equality with $(y^j)^T$ we get
\[
(y^j)^Ty^j - (y^j)^T y^{j-1}+ \Delta t (y^j)^T \hbox{diag }(y^{j-1})Uy^j- \Delta t (y^j)^T f^j=0
\]
Now, we analyze the second term of the previous equality, using the definition of the matrix $U$. With out lose of generality we assume that $U$ is given by the forward finite difference scheme. Hence,
\[
(y^j)^T \hbox{diag }(y^{j-1})Uy^j=\sum_{i=1}^n y^j_i y^{j-1}_i \left(\dfrac{y^j_{i+1}-y^j_{i}}{h}\right)
\]
Using the well--known summation by parts (see e.g.~\cite{knuth1997art}) we have
\[
\sum_{i=1}^n y^j_i y^{j-1}_i \left(\dfrac{y^j_{i+1}-y^j_{i}}{h}\right)=\dfrac{1}{h}[y^j_ny^{j-1}_n-y^j_1y^{j-1}_1]-\sum_{i=1}^n y^j_i y^{j-1}_i \left(\dfrac{y^j_{i+1}-y^j_{i}}{h}\right).
\]
Due to the boundary conditions we get that $y^j_n=y^j_1=0$ for all $j=1,\ldots,N_t$. Therefore,
\[
\sum_{i=1}^n y^j_i y^{j-1}_i \left(\dfrac{y^j_{i+1}-y^j_{i}}{h}\right)=0.
\]
Consequently,
\[
\parallel y^j\parallel^2 = (y^j)^T(y^{j-1})+\Delta t (y^j)^Tf^j,
\]
and, using Cauchy--Schwarz inequality, we get
\[
\parallel y^j\parallel \leq \parallel y^{j-1}\parallel + \Delta t \parallel f^j\parallel, \quad \text{ for all }j=1,\ldots,N_t.
\]
Using the discrete Gronwall inequality we finally obtain
%
% Finally, let be $k$ a fixed index, $1\leq k\leq N_t$. Summing over $j$ from 2 to $k$ we have
% \[
% \sum_{j=2}^k\left[\parallel y^j\parallel -\parallel y^{j-1}\parallel \right] \leq \Delta t \sum_{j=2}^k\parallel f^j\parallel.
% \]
% Since the sum in the left hand side of the inequality is a telescoping sum and due to the initial condition of the state equation we can conclude that
\begin{equation*}
  \parallel y^k\parallel \leq \parallel u\parallel +\Delta t \sum_{j=2}^k\parallel f^j\parallel, \quad
  \text{ for all }k=1,\ldots,N_t. \qedhere
\end{equation*}
\end{proof}

%The solution of the equation is reduced to the solution of the nonlinear system above. Because of the size of the matrices described above, the full discretization is not applicable in the computational sense. For that reason, the numerical solution of the state equation was developed solving the $N_t$ linear systems given in the equation~\eqref{eq:linear_sys}. %different from the solution of the fully--discretized nonlinear system above. The difference resides in the solution of the separable linear systems, for each one of the time steps. It is easy to realize that the previously defined nonlinear system can be separated into $N_t$ linear systems of a smaller size.

%\paragraph{Remark.-} The matrix showed in the equation~\eqref{eq:UP} is used only for the numerical solution of the problem. The following analysis will be done assuming that the matrix $\mathbb{U}$ does not depend on the state variable $y$; that is, it represents the discrete gradient.

\subsection{Discretized data assimilation problem}
For the discretization of problem \eqref{eq:02}, we assume that the observation operator $\mathcal{H}(\cdot)$ is linear and, therefore, there exist matrices $S\in\mathbb{R}^{m_o\times (n_o N_t)}$ and $H\in\mathbb{R}^{n_o N_t\times m}$ such that $\mathcal{H}(x)=S Hx$, with $m=nN_t$, $n_o$ the amount of observations taken in space, $N_o$ the number of instants at which we take the observations and $m_o=n_oN_o$. Thus, the problem is given by:
\begin{equation}\label{eq:discrete_problem_tgv}
\left\{
\arraycolsep=1.4pt\def\arraystretch{2.2}
\begin{array}{ll}
\displaystyle \min_{(\mathbf{y},u,w)\in \mathbb{R}^{m}\times \mathbb{R}^n\times\mathbb{R}^{n-1}}&J(\mathbf{y},u,w)=\dfrac{1}{2} (SH\mathbf{y}-\mathbf{z})^T R^{-1}(SH\mathbf{y}-\mathbf{z})+\dfrac{\mu}{2}w^Tw\\
&\displaystyle+ \dfrac{1}{2}(u-u^b)^T B^{-1}(u-u^b)\displaystyle+\alpha\sum_{i=1}^{n-1}|D_iu-w_i|+\beta \sum_{i=1}^{n-1}|E_i w|\\
\hbox{subject to:}
& \mathbb{E}\mathbf{y}+\mathbb{Z}(\mathbf{y})\mathbb{U}\mathbf{y}-\mathbf{f}=0,
\end{array}
\right.
\end{equation}
where the matrix $D$ corresponds to the discrete gradient associated with a forward finite differences scheme and $E$ is the one associated with backward finite differences. It is worth to remark that since we work with 1D--spatial functions the symmetrized discrete gradient match with the usual one. The Tikhonov term $\dfrac{\mu}{2}w^Tw$ is further added to obtain the well-posedness of the inverse problem.

%From the discretization of the Burger's equation, we know that $\mathbf{y}\in \mathbb{R}^{m}$ where $m=n\cdot N_t$, $n$ is the number of discretization points for space and $N_t$ the number of discretization points for the time. Furthermore, as the control is defined just in space in the discrete problem $u\in \mathbb{R}^n$ and $D$ is the discrete gradient of a function given by the \emph{forward} finite difference scheme, that is
%\[
%D=\dfrac{1}{h}\left[\begin{array}{cccccc}
%1&0&0&\dots&0&0\\
%-1&1&0&\dots&0&0\\
%\vdots&\vdots&\vdots&\ddots&\vdots&\vdots\\
%0&0&0&\dots&-1&1
%\end{array}\right],\quad E=\dfrac{1}{h}\left[\begin{array}{ccccc}
%-1&1&0&\dots&0\\
%0&-1&1&\dots&0\\
%\vdots&\vdots&\vdots&\ddots&\vdots\\
%0&0&0&\ldots&1\\
%0&0&0&\dots&-1
%\end{array}\right]
%\]
%%Let us mention that this matrix has $n-1$ rows and $n$ columns in order to guarantee the stability of the discretization. We denote as $D_{i}$ the $i$--th row of the matrix $D$. Furthermore, the matrix $E$ denotes the symmetrized gradient. As we mentioned in the previous sections since we are working with one spatial dimension the symmetrized gradient is equal to the usual gradient.
%In order to compute the optimality system we have to compute the derivatives of the objective function $J_\gamma(\mathbf{y},u)$ and of the state equation $e(\mathbf{y},u)$.

\subsection{Adjoint State}
This subsection is devoted to discuss the main features of the adjoint equation. %This analysis will allow us to guarantee the existence and uniqueness of the solutions of the adjoint equation or equivalently the bijectivity of $e_y(\mathbf{y},u)$.
We start by noticing that the adjoint equation is given by the following expression:
\begin{equation}\label{eq:adjoint_state}
\mathbb{E}^T\mathbf{p}+\mathbb{Z}\mathbb{U}^T \mathbf{p} + \hbox{diag}(\mathbb{U}\mathbf{y})\mathbf{p}-H^TS^TR^{-1}(SH\mathbf{y}-\mathbf{z})=0
\end{equation}
or, equivalently,
\[
e_y(\mathbf{y},u)^T \mathbf{p} =H^TS^TR^{-1}(SH\mathbf{y}-\mathbf{z})
\]
Thanks to Lemma~\ref{lm:e_y_invertible}, which establishes the invertibility of the matrix $e_y(\mathbf{y},u)$, existence of a unique solution to the adjoint equation \eqref{eq:adjoint_state} follows.

In the next Lemma we prove the boundedness of the adjoint state; for this purpose we use the discrete Gronwall's Theorem~\cite{quarteroni2010numerical}.
\begin{lemma}\label{lm:p_inf}
Let $(\mathbf{y,p})$ be the state and adjoint state associated to some control $u$. Let us assume that $diag(U y^j) \geq 0$ and that there exists a constant $c>0$ such that
\begin{equation} \label{eq: hyp bound adjoint}
  1-\Delta t \|U\| \| u \| - |\Delta t|^2 \|U\| \|\mathbf f\|  \geq c.
\end{equation}
Then the following inequality holds
\[
\| \mathbf{p}\| \leq \rho \| H^TS^TR^{-1}(SH\mathbf{y}-\mathbf{z})\|,
\]
with $\rho>0$ a constant independent of $u$ and $\mathbf{y}$.
\end{lemma}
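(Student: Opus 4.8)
The plan is to exploit the block bidiagonal structure of $e_y(\mathbf y,u)^T$ in order to rewrite the adjoint equation \eqref{eq:adjoint_state} as a backward-in-time recursion, and then to distill a scalar recursive inequality for $\|p^j\|$ to which the discrete Gronwall inequality can be applied. First I would set $\mathbf b:=H^TS^TR^{-1}(SH\mathbf y-\mathbf z)$ with time-blocks $b^j\in\mathbb R^n$. Since $\mathbb E^T$ is upper block-bidiagonal, carrying $\tfrac1{\Delta t}\mathbb I$ on its diagonal and $-\tfrac1{\Delta t}\mathbb I$ on its superdiagonal, the $j$-th time-block of \eqref{eq:adjoint_state} reads (with the convention $p^{N_t+1}=0$)
\[
\Gamma_j\, p^j \;=\; \tfrac1{\Delta t}\,p^{j+1}+b^j ,
\]
where the diagonal block $\Gamma_j=\tfrac1{\Delta t}\mathbb I+\mathrm{diag}(y^{j-1})U^T+\mathrm{diag}(Uy^{j})$ coincides, up to an index shift, with the matrix $G_j$ of \eqref{eq:Gmatrix}, and is therefore invertible under the standing assumption $\mathrm{diag}(Uy^j)\ge 0$ by Lemma~\ref{lm:G_invertible}. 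This determines $p^{N_t},p^{N_t-1},\dots,p^1$ successively.

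Next I would test the $j$-th block equation with $p^j$. The mass term contributes $\tfrac1{\Delta t}\|p^j\|^2$; the assumption $\mathrm{diag}(Uy^j)\ge 0$ makes $(p^j)^T\mathrm{diag}(Uy^j)p^j\ge 0$; and the remaining convective term is controlled in modulus by $\|U\|\,\|y^{j-1}\|\,\|p^j\|^2$, using $\|\mathrm{diag}(y^{j-1})\|\le\|y^{j-1}\|$. Invoking the state bound \eqref{eq: state bound} of Lemma~\ref{lem:boundedness_y} to estimate $\|y^{j-1}\|\le\|u\|+\Delta t\|\mathbf f\|$, the coercivity hypothesis \eqref{eq: hyp bound adjoint} yields the uniform lower bound
\[
(p^j)^T\Gamma_j p^j \;\ge\; \Big(\tfrac1{\Delta t}-\|U\|\,\|y^{j-1}\|\Big)\|p^j\|^2 \;\ge\; \tfrac{c}{\Delta t}\,\|p^j\|^2 .
\]
Combining this with the Cauchy–Schwarz estimate $\tfrac1{\Delta t}(p^j)^Tp^{j+1}+(p^j)^Tb^j\le\big(\tfrac1{\Delta t}\|p^{j+1}\|+\|b^j\|\big)\|p^j\|$ and dividing by $\|p^j\|$ gives the recursion
\[
\|p^j\| \;\le\; \tfrac1c\,\|p^{j+1}\|+\tfrac{\Delta t}{c}\,\|b^j\| .
\]

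Finally, starting from $\|p^{N_t}\|\le\tfrac{\Delta t}{c}\|b^{N_t}\|$ and reindexing the recursion so that it runs forward in the reversed index, the discrete Gronwall inequality produces $\|p^j\|\le C\sum_{k=j}^{N_t}\|b^k\|$ with $C$ depending only on $c$, $\Delta t$ and $N_t$; summing over $j$ and bounding $\sum_k\|b^k\|$ by a fixed multiple of $\|\mathbf b\|$ yields the claimed estimate. I expect the main obstacle to be the convective block $\mathrm{diag}(y^{j-1})U^T$, which is not sign-definite and hence cannot be discarded: it must instead be absorbed into the $\tfrac1{\Delta t}$ supplied by the mass matrix. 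It is precisely this absorption, demanding $1-\Delta t\|U\|\|u\|-|\Delta t|^2\|U\|\|\mathbf f\|\ge c>0$, that both motivates hypothesis \eqref{eq: hyp bound adjoint} and guarantees that the resulting coercivity constant $\tfrac{c}{\Delta t}$, and therefore the final constant $\rho$, is uniform in $u$ and $\mathbf y$.
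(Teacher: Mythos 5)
Your proposal is correct and follows essentially the same route as the paper's proof: test the $j$-th time block of the adjoint system with $p^j$, drop the nonnegative term $\mathrm{diag}(Uy^j)$, absorb the convective block via the state bound of Lemma~\ref{lem:boundedness_y} and hypothesis \eqref{eq: hyp bound adjoint} to obtain the coercivity constant $c/\Delta t$, and close with the discrete Gronwall inequality on the resulting backward recursion. The only (cosmetic) difference is that the paper makes the time reversal explicit through the permutation matrix $\mathbb{P}$ in \eqref{eq:P_per} and works with $\mathbf{v}=\mathbb{P}^{-1}\mathbf{p}$, whereas you simply run the recursion downward in $j$.
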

\begin{proof}
We start recalling the adjoint equation given in~\eqref{eq:adjoint_state}
\[
\mathbb{E}^T\mathbf{p}+\mathbb{Z}\mathbb{U}^T \mathbf{p} + \hbox{diag}(\mathbb{U}\mathbf{y})\mathbf{p}-H^TS^TR^{-1}(HS\mathbf{y}-\mathbf{z})=0.
\]
Defining $\mathbf{p}=\mathbb{P}\mathbf{v}$, with $\mathbb{P}$ a permutation matrix given by
\begin{equation}\label{eq:P_per}
\mathbb{P}=\left[
\begin{array}{cccc}
0&0&\ldots&\mathbb{I}\\
\vdots&\vdots&\iddots&\vdots\\
0&\mathbb{I}&\hdots&0\\
\mathbb{I}&0&\hdots&0
\end{array}
\right]
\end{equation}
and $\mathbb{I}$ the identity matrix, we get that
\[
p^j=v^{N_t-j+1}
\]
and therefore the adjoint system in the variable $\mathbf{v}$ is given by
\[
\mathbb{E}^T\mathbb{P}\mathbf{v}+\mathbb{Z}\mathbb{U}^T \mathbb{P}\mathbf{v} + \hbox{diag}(\mathbb{U}\mathbf{y})\mathbb{P}\mathbf{v}-H^TS^TR^{-1}(HS\mathbf{y}-\mathbf{z})=0.
\]
Performing a component-wise analysis in time we distinguish the following cases:
\begin{description}
\item[For $j=1$]
\[
\left(\dfrac{1}{\Delta t}\mathbb{I}+\hbox{diag}(Uy^1)\right)v^{N_t}=\dfrac{1}{\Delta t}v^{N_t-1}+\left[H^TS^TR^{-1}(SH\mathbf{y}-\mathbf{z})\right]_{1}
\]
\item[For $j=2,\ldots,N_t-1$]
\[
\left(\dfrac{1}{\Delta t}\mathbb{I} + \hbox{diag}(y^{j-1})U^T+\hbox{diag }(Uy^{j})\right)v^{N_t-j+1}=\dfrac{1}{\Delta t}v^{N_t-j}+\left[H^TS^TR^{-1}(SH\mathbf{y}-\mathbf{z})\right]_{j}
\]
\item[For $j=N_t$]
\[
\left(\dfrac{1}{\Delta t}\mathbb{I} + \hbox{diag}(y^{N_t-1})U^T+\hbox{diag }(Uy^{N_t})\right)v^1=\left[H^TS^TR^{-1}(SH\mathbf{y}-\mathbf{z})\right]_{N_t}
\]
\end{description}

Multiplying with $v^{N_t-j+1}$ it then follows, for all $j=1,\ldots,N_t-1$, that
\begin{align*}
  &\left(\left(\dfrac{1}{\Delta t}\mathbb{I} + \hbox{diag}(y^{j-1})U^T+\hbox{diag }(Uy^{N_t})\right)v^{N_t-j+1}, v^{N_t-j+1} \right)\\
  &\hspace{1cm} \geq \dfrac{1}{\Delta t} \|v^{N_t-j+1}\|^2 +\left( \hbox{diag}(y^{N_t-j-1})U^T v^{N_t-j+1}, v^{N_t-j+1} \right)\\
  &\hspace{1cm} \geq \dfrac{1}{\Delta t} \|v^{N_t-j+1}\|^2 -\|U\| ~\|\mathbf y\| ~\|v^{N_t-j+1}\|^2\\
  &\hspace{1cm} \geq \dfrac{1}{\Delta t} \|v^{N_t-j+1}\|^2 -\|U\| ~(\|u\| + \Delta t \|\mathbf f\|) ~\|v^{N_t-j+1}\|^2,
\end{align*}
where the last inequality was obtained thanks to the bound \eqref{eq: state bound}.

Since by hypothesis there exists a positive constant $c$ such that \eqref{eq: hyp bound adjoint} holds, we get that
\begin{equation}
  \dfrac{c}{\Delta t} \|v^{N_t-j+1}\| \leq \dfrac{1}{\Delta t} \|v^{N_t-j}\|+ \left\| \left[H^TS^TR^{-1}(SH\mathbf{y}-\mathbf{z})\right]_{j} \right\|.
\end{equation}
Finally, applying the discrete Gronwall inequality the result follows with $\rho=\dfrac{\Delta t}{c}\exp\left(\nicefrac{1}{c}-1\right)$.
\end{proof}

\begin{remark}
  Let us remark that the assumed condition $diag(U y^j) \geq 0$ is not indispensable for getting the result, but allows to get a sharper bound. This condition, however, is of importance for the numerical solution of both the state equation and the data assimilation problem, and will be guaranteed by using an upwinding scheme.
\end{remark}

\subsection{Optimality system}
Next we derive formally a necessary optimality condition for problem \eqref{eq:discrete_problem_tgv} in form of a Karush-Kuhn-Tucker system.

\begin{theorem}\label{teo:opt_sys_tgv}
Let $\bar u$ be an optimal solution for problem \eqref{eq:discrete_problem_tgv}. Then there exists an adjoint state $\bar{\mathbf{p}} \in\mathbb{R}^{m}$ such that the following optimality system is satisfied:
\begin{subequations} \label{eq:discrete_optimality_system}
\begin{align}
    & \mathbb{E}\bar{\mathbf{y}}+\mathbb{Z} \mathbb{U}\bar{\mathbf{y}}-\mathbf{f}=0 \\
    &\mathbb{E}^T\bar{\mathbf{p}}+\mathbb{Z}\mathbb{U}^T \bar{\mathbf{p}} + \hbox{diag}(\mathbb{U}\bar{\mathbf{y}})\bar{\mathbf{p}}-H^TS^TR^{-1}(SH\bar{\mathbf{y}}-\mathbf{z})=0\\
    &\left(B^{-1}(\bar{u}-u^b)+\dfrac{1}{\Delta t}\bar{p}^1,u-\bar{u}\right)+\alpha \displaystyle\sum_{i=1}^{n-1}|D_iu-w_i|-\alpha \displaystyle\sum_{i=1}^{n-1}|D_i \bar u- \bar w_i|\\ &\hspace{2cm}+(\mu \bar{w},w-\bar{w})+ \beta\displaystyle\sum_{i=1}^{n-1}|E_iw|-\beta \displaystyle\sum_{i=1}^{n-1}|E_i \bar w| \geq 0,
    \quad \forall (u,w)\in \mathbb{R}^n \times \mathbb{R}^{n-1}. \nonumber
\end{align}
\end{subequations}
\end{theorem}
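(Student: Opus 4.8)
The plan is to eliminate the state, reduce the problem to an unconstrained minimization in $(u,w)$ of the sum of a smooth and a convex term, and then write the standard first-order condition for such a splitting. By Proposition~\ref{prop:existence_burgers}, in a neighborhood $\mathcal{U}$ of the optimal control $\bar u$ the constraint $e(\mathbf{y},u)=0$ defines a continuously differentiable implicit map $\mathbf{y}=\mathbf{y}(u)$ with $\bar{\mathbf{y}}=\mathbf{y}(\bar u)$. Inserting this map into the objective turns~\eqref{eq:discrete_problem_tgv} into the minimization of $\hat J(u,w)=F(u,w)+\Phi(u,w)$ over $\mathbb{R}^n\times\mathbb{R}^{n-1}$, where
\[
F(u,w)=\tfrac12(SH\mathbf{y}(u)-\mathbf{z})^TR^{-1}(SH\mathbf{y}(u)-\mathbf{z})+\tfrac{\mu}{2}w^Tw+\tfrac12(u-u^b)^TB^{-1}(u-u^b)
\]
is continuously differentiable, being a composition of smooth maps with the $C^1$ function $\mathbf{y}(\cdot)$, and $\Phi(u,w)=\alpha\sum_{i=1}^{n-1}|D_iu-w_i|+\beta\sum_{i=1}^{n-1}|E_iw|$ is convex. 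The key simplification is that $\Phi$ carries all the nonsmoothness while depending only on $(u,w)$, so the elimination of $\mathbf{y}$ touches only the smooth part.

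First I would derive the abstract optimality inequality. Setting $u_t=\bar u+t(u-\bar u)$, $w_t=\bar w+t(w-\bar w)$ for $t\in(0,1]$, convexity gives $\Phi(u_t,w_t)-\Phi(\bar u,\bar w)\le t\,(\Phi(u,w)-\Phi(\bar u,\bar w))$, and optimality of $(\bar u,\bar w)$ gives $\hat J(u_t,w_t)-\hat J(\bar u,\bar w)\ge 0$. Combining the two and dividing by $t$,
\[
0\le \frac{F(u_t,w_t)-F(\bar u,\bar w)}{t}+\Phi(u,w)-\Phi(\bar u,\bar w).
\]
Letting $t\to0^+$ and using the differentiability of $F$ yields
\[
\big(\nabla F(\bar u,\bar w),(u,w)-(\bar u,\bar w)\big)+\Phi(u,w)-\Phi(\bar u,\bar w)\ge 0,\qquad\forall(u,w),
\]
which has exactly the block structure of the third line of~\eqref{eq:discrete_optimality_system}; it then remains only to identify $\nabla F$ through an adjoint state.

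Next I would compute the reduced gradient. Differentiating $e(\mathbf{y}(u),u)=0$ gives $\mathbf{y}'(u)=-e_y(\mathbf{y},u)^{-1}e_u$, with $e_y$ invertible by Lemma~\ref{lm:e_y_invertible} and $e_u=[-\tfrac{1}{\Delta t}\mathbb{I},0,\ldots,0]^T$. Introducing $\bar{\mathbf{p}}$ as the unique solution of the adjoint equation $e_y(\bar{\mathbf{y}},\bar u)^T\bar{\mathbf{p}}=H^TS^TR^{-1}(SH\bar{\mathbf{y}}-\mathbf{z})$, i.e.\ equation~\eqref{eq:adjoint_state}, whose unique solvability again follows from Lemma~\ref{lm:e_y_invertible}, the chain rule gives
\[
\nabla_u F=\mathbf{y}'(\bar u)^TH^TS^TR^{-1}(SH\bar{\mathbf{y}}-\mathbf{z})+B^{-1}(\bar u-u^b)=-e_u^T\bar{\mathbf{p}}+B^{-1}(\bar u-u^b)=\tfrac{1}{\Delta t}\bar p^1+B^{-1}(\bar u-u^b),
\]
where the last equality uses the block form of $e_u$, and $\nabla_w F=\mu\bar w$. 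Substituting these expressions into the variational inequality, and recording the state and adjoint equations as the first two lines, reproduces~\eqref{eq:discrete_optimality_system}.

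The only delicate point is the reduction step: the implicit function theorem provides $\mathbf{y}(\cdot)$ only on $\mathcal{U}$, so the convexity argument is first carried out for $t$ small enough that $u_t\in\mathcal{U}$. Since the passage $t\to0^+$ only probes one-sided directional derivatives at $\bar u$, this restriction is harmless and the resulting inequality holds for all $(u,w)$. Everything else — the adjoint representation of $\nabla_u F$ and the bookkeeping of the sparse structure of $e_u$ — is routine once the invertibility of $e_y$ from Lemma~\ref{lm:e_y_invertible} is in hand.
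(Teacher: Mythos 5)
Your proof is correct and follows essentially the same route as the paper: decompose the reduced objective into a smooth part and a convex nonsmooth part, obtain the variational inequality $(\nabla j_1,(u,w)-(\bar u,\bar w))+j_2(u,w)-j_2(\bar u,\bar w)\ge 0$, and identify the reduced gradient through the adjoint state using the invertibility of $e_y$ from Lemma~\ref{lm:e_y_invertible}. The only difference is that you prove the abstract first-order condition for a smooth-plus-convex sum from scratch via the convexity/difference-quotient argument (and correctly handle the restriction to the neighborhood $\mathcal{U}$ from the implicit function theorem), whereas the paper simply cites \cite[Thm~6.1]{de2015numerical} for that step.
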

\begin{proof}
Due to the nondifferentiability of the objective function we use Theorem 6.1 of~\cite{de2015numerical} in order to compute the optimality system of the problem. Thus, we define the functions
\begin{equation*}
  j_1(u,w)=\dfrac{1}{2} (SH\mathbf{y}-\mathbf{z})^T R^{-1}(SH\mathbf{y}-\mathbf{z})+ \dfrac{1}{2}(u-u^b)^T B^{-1}(u-u^b) + \dfrac{\mu}{2}w^Tw,
\end{equation*}
and
\[
j_2(u,w)=\displaystyle\sum_{i=1}^{n-1}|D_iu-w_i|+\displaystyle\sum_{i=1}^{n-1}|E_iw|.
\]
We notice that the function $j_1(u,w)$ is differentiable and $j_2(u,w)$ is continuous and convex. Thus, using \cite[Thm~6.1]{de2015numerical} we know that the optimality condition of the problem is given by the following variational inequality.
\begin{equation}\label{eq:opt cond primal form}
  (\nabla j_1(\bar{u},\bar{w}),(u,w)-(\bar{u},\bar{w}))+ j_2(u,w)-j_2(\bar{u},\bar{w})\geq 0, \quad \forall (u,w)\in \mathbb{R}^n\times \mathbb{R}^{n-1}.
\end{equation}
For the first term we have
\begin{align}
(\nabla j_1(\bar{u},\bar{w}),(u,w)-(\bar{u},\bar{w})) &= (SH\mathbf{y}-z)R^{-1} SH \mathbf{y}'(\bar{u})(u-\bar{u}) \nonumber\\
&\hspace{1cm} +(\bar{u}-u^b)^T B^{-1}(u-\bar{u})+\mu \bar{w}^T(w-\bar{w})\nonumber\\
&=\left(H^TS^T R^{-1}(SH\mathbf{y}-z),\mathbf{y}'(\bar{u})(u-\bar{u})\right)  \nonumber\\
&\hspace{1cm}+\left( B^{-1}(\bar{u}-u^b),u-\bar{u} \right)+ (\mu \bar{w},w-\bar{w}). \label{eq:cost gradient}
\end{align}

Introducing now the adjoint state $\bar{\mathbf{p}}$ as the unique solution to \eqref{eq:adjoint_state}
% \[
% e_y(\mathbf{y},u)^T\bar{\mathbf{p}}= H^TS^T R^{-1}(SH\mathbf{y}-z)=0
% \]
and replacing this in \eqref{eq:cost gradient}, we get
\begin{align}
(\nabla j_1(\bar{u},\bar{w}),(u,w)-(\bar{u},\bar{w}))&{}=\left(B^{-1}(\bar{u}-u^b),u-\bar{u}\right)+ \left(e_y(\mathbf{y},u)^T\bar{\mathbf{p}},\mathbf{y}' (\bar{u})(u-\bar{u})\right) \nonumber\\
&{}\hspace{1cm}+(\mu\bar{w},w-\bar{w}).
\end{align}
Furthermore, we know that the linearized equation is given by
\begin{equation}\label{eq:lin_eq}
e_y(\mathbf{y}(\bar{u}),\bar{u})\mathbf{y}' (\bar{u})h+e_u(\mathbf{y}(\bar{u}),\bar{u})h=0
\end{equation}

Using the adjoint operator of $e_y(\mathbf{y},\bar{u})$, the linearized equation~\eqref{eq:lin_eq} and the fact that
\[
e_u(\mathbf{y},u)=(
-\dfrac{1}{\Delta t}\mathbb{I}, 0, \dots, 0)^T
\]
we get, together with \eqref{eq:opt cond primal form}, that
\begin{align}
  &\left(B^{-1}(\bar{u}-u^b)+\dfrac{1}{\Delta t}p^1,u-\bar{u}\right)+(\mu\bar{w},w-\bar{w})+\alpha \displaystyle\sum_{i=1}^{n-1}|D_iu-w_i|\\ &\hspace{2cm}-\alpha \displaystyle\sum_{i=1}^{n-1}|D_i \bar u- \bar w_i|+ \beta\displaystyle\sum_{i=1}^{n-1}|E_iw|-\beta \displaystyle\sum_{i=1}^{n-1}|E_i \bar w| \geq 0,
  \quad \forall (u,w)\in \mathbb{R}^n \times \mathbb{R}^{n-1}. \nonumber
\end{align}
\end{proof}

%%%%%%%%%%%%%%%%%%%%%%%%%%%%%%%%%%%%%%%%%%%%%%%
%%%%%%%%%%%%%%%%%%%%%%%%%%%%%%%%%%%%%%%%%%%%%%%

\section{Regularized data assimilation problem}
As a preparatory step for the solution of the data assimilation problem \eqref{eq:discrete_problem_tgv} we consider next a properly regularized version, which consists in replacing the non-differentiable part of the objective function, by a differentiable function. Since we are going to use second order optimization methods we choose a $\mathcal{C}^2$ Huber regularization, which, for $t\in \mathbb{R}$, is given by the following expression:
\begin{equation}\label{eq:huber2}
  H_{\gamma}(t)=\left\{\begin{array}{ll}
  |t|+C_1+K&\hbox{if  }t\in\mathcal{A}
  \\
  \dfrac{\gamma}{2}t^2& \hbox{ if } t\in\mathcal{B}
  \\
  F|t|+\dfrac{G}{2}|t|^2+\dfrac{C}{3}|t|^3+D&\hbox{if }t\in\mathcal{I}
  \end{array}\right.
\end{equation}
where:
  {\small{
  \begin{align*}
  \mathcal{A}:=\left\{t\in \mathbb{R}\colon \gamma|t|\geq 1+ \dfrac{1}{2\gamma}\right\},
  \mathcal{B}:=\left\{t\in \mathbb{R}\colon \gamma |t|\leq 1-\dfrac{1}{2\gamma} \right\},
  \mathcal{I}:=\left\{t\in \mathbb{R}\colon |\gamma|t|-1|\leq \dfrac{1}{2\gamma}\right\},
  \end{align*}}}
with constants:  $l_1=\dfrac{1}{\gamma}\left(1-\dfrac{1}{2\gamma}\right)$, $l_2=\dfrac{1}{\gamma}\left(1+\dfrac{1}{2\gamma}\right)$ and
\[
  \begin{array}{ll}
  F= 1-\dfrac{(2\gamma+1)^2}{8\gamma}&G=\dfrac{\gamma}{2}(2\gamma+1)\\
  C=-\dfrac{\gamma^3}{2}&D=\left(\dfrac{\gamma}{2}-\dfrac{G}{2}\right)l_1^2-F|l_1|-\dfrac{C}{3}|l_1|^3\\
  C_1=\dfrac{\gamma}{2}l_1^2-l_2&K=F(l_2-l_1)+\dfrac{G}{2}(l_2^2-l_1^2)+\dfrac{C}{3}(l_2^3-l_1^3)
  \end{array}
\]
Therefore, the regularized version of the $\ell^1$--norm is given by
\[
  \sum_{i=1}^{n}|x_i|= \sum_{i=1}^{n}H_\gamma(x_i) \quad \text{for } x\in\mathbb{R}^n.
\]
The first derivative of the regularized $\ell^1$--norm is the vector which components are given by:
\begin{equation}\label{eq:der_huber_c2}
(h_\gamma(x))_i=\left\{\begin{array}{ll}
\dfrac{x_i}{|x_i|}&\hbox{if }x_i\in\mathcal{A}
\\
\gamma x_i&\hbox{if }x_i\in\mathcal{B}
\\
\dfrac{x_i}{|x_i|}\left(1-\dfrac{\gamma}{2}\left(1-\gamma|x_i|+\dfrac{1}{2\gamma}\right)^2\right)&\hbox{if }x_i\in\mathcal{I}
\end{array}\right.
\end{equation}
and the second derivative is a diagonal matrix which elements are:
{\small{\begin{equation}\label{eq:2der_huber_c2}
(h'_{\gamma}(x))_{ii}=\left\{\begin{array}{ll}
\dfrac{1}{|x_i|}-\dfrac{x_i\cdot x_i}{|x_i|^3}&\hbox{if }x_i\in\mathcal{A}
\\
\gamma &\hbox{if }x_i\in\mathcal{B}
\\
\left(1-\dfrac{\gamma}{2}\theta_\gamma^2(x_i)\right)\left[\dfrac{1}{|x_i|}-\dfrac{x_i\cdot x_i}{|x_i|^3}\right]+\gamma^2\theta_\gamma(x_i)\dfrac{x_i \cdot x_i}{|x_i|^2}&\hbox{if }x_i\in\mathcal{I}\\
\end{array}
\right.
 \end{equation}}}
and $\theta_\gamma(x_i)=1-\gamma|x_i|+\dfrac{1}{2\gamma}$.

Therefore, the regularized objective function takes the following form
\begin{align}
\nonumber J_\gamma(\mathbf{y},u,w)&{}=\dfrac{1}{2}(SH\mathbf{y}-\mathbf{z})^T R^{-1}(SH\mathbf{y}-\mathbf{z}) + \dfrac{1}{2}(u-u^b)^T B^{-1}(u-u^b)\\
\label{eq:cost_reg_tgv}&\displaystyle \hspace{0.5cm}+\dfrac{\mu}{2} w^Tw+\alpha\sum_{i=1}^{n-1}H_\gamma(D_iu-w_i)+\beta \sum_{i=1}^{n-1}H_\gamma(E_i w)
\end{align}
Furthermore, the TGV--regularized problem can be written in the following way
\begin{equation}\label{eq:reg_discrete_problem_tgv}
\begin{array}{ll}
\displaystyle\min_{(\mathbf{y},u,w)}&J_\gamma(\mathbf{y},u,w)\\
\hbox{subject to:}
&\mathbb{E}\mathbf{y}+\mathbb{Z}(\mathbf{y})\mathbb{U}\mathbf{y}-\mathbf{f}=0.
\end{array}
\end{equation}

\subsection{Consistency of the regularization}
In this section, we analyze the convergence of the regularized solutions towards the solution of the original data assimilation problem. Let us start by noticing that, for $a\in \mathbb{R}$, the following holds:
\[
\lim_{\gamma\to\infty} H_\gamma(a)=|a|.
\]

In order to prove that the solutions of the regularized problem converge to the original, we need to show the uniform boundedness of the solutions to the regularized data assimilation problems with respect to the parameter $\gamma$ from the Huber regularization. %Since the objective function depends only on a few components of $\mathbf{y}$, we need to use the state equation to guarantee its boundedness.

The next theorem shows the convergence of the solutions to the regularized problems to the solution of the original one.
\begin{theorem}

Let $(\mathbf{y}_\gamma,u_\gamma,w_\gamma)$ be a sequence of solutions to problem~\eqref{eq:reg_discrete_problem_tgv} where $\mathbf{y}_\gamma$ satisfies $e(\mathbf{y}_\gamma,u_\gamma)=0$.
Furthermore, let $(\mathbf{y},u,w)$ be a global solution of problem~\eqref{eq:discrete_problem_tgv}. Then, there exists a subsequence $(\mathbf{y}_{\gamma_k},u_{\gamma_k},w_{\gamma_k})$ such that
\[
(\mathbf{y}_{\gamma_k},u_{\gamma_k},w_{\gamma_k})\rightarrow (\mathbf{y},u,w), \text{ for }k\to \infty.
\]
\end{theorem}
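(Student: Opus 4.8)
The plan is to run the standard consistency (essentially a $\Gamma$-limit) argument in three stages: first derive a bound on the regularized minimizers that is uniform in $\gamma$, then extract a convergent subsequence by finite-dimensional compactness, and finally pass to the limit in the minimality inequality to identify the limit as a global solution of \eqref{eq:discrete_problem_tgv}. The mechanism throughout is the two-sided estimate
\[
0 \le |t| - H_\gamma(t) \le \frac{c}{\gamma}, \qquad t \in \mathbb{R},
\]
which one reads off directly from the piecewise definition \eqref{eq:huber2} (in particular $H_\gamma(t) = \gamma t^2/2 \le |t|$ on $\mathcal{B}$, and $H_\gamma(t) = |t| + C_1 + K$ with $C_1 + K = O(1/\gamma)$ on $\mathcal{A}$), together with the pointwise limit $H_\gamma(a) \to |a|$ already recorded above. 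In particular $H_\gamma \ge 0$, and $H_{\gamma_k}(t_k) \to |t^\ast|$ whenever $t_k \to t^\ast$ and $\gamma_k \to \infty$.

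For the uniform bound I would test the minimality of $(\mathbf{y}_\gamma, u_\gamma, w_\gamma)$ for $J_\gamma$ against the fixed feasible triple $(\mathbf{y}, u, w)$, the given global solution of \eqref{eq:discrete_problem_tgv}, obtaining
\[
J_\gamma(\mathbf{y}_\gamma, u_\gamma, w_\gamma) \le J_\gamma(\mathbf{y}, u, w) \le J(\mathbf{y}, u, w) =: M,
\]
where the last inequality uses $H_\gamma \le |\cdot|$ and $M$ is independent of $\gamma$. Since every term of $J_\gamma$ is nonnegative, the coercive quadratic terms satisfy $\tfrac12 (u_\gamma - u^b)^T B^{-1}(u_\gamma - u^b) \le M$ and $\tfrac{\mu}{2}\|w_\gamma\|^2 \le M$; positive definiteness of $B^{-1}$ and $\mu > 0$ then bound $\|u_\gamma\|$ and $\|w_\gamma\|$ uniformly in $\gamma$, and Lemma~\ref{lem:boundedness_y} propagates this to a uniform bound on $\|\mathbf{y}_\gamma\|$ through $e(\mathbf{y}_\gamma, u_\gamma) = 0$. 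By Bolzano--Weierstrass there is a subsequence $(\mathbf{y}_{\gamma_k}, u_{\gamma_k}, w_{\gamma_k}) \to (\mathbf{y}^\ast, u^\ast, w^\ast)$, and since $e$ is continuous (indeed polynomial), passing to the limit in $e(\mathbf{y}_{\gamma_k}, u_{\gamma_k}) = 0$ shows the limit is feasible.

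It remains to identify the limit as a minimizer. For an arbitrary feasible competitor $(\tilde{\mathbf{y}}, \tilde u, \tilde w)$ of \eqref{eq:discrete_problem_tgv}, minimality of the regularized solutions gives $J_{\gamma_k}(\mathbf{y}_{\gamma_k}, u_{\gamma_k}, w_{\gamma_k}) \le J_{\gamma_k}(\tilde{\mathbf{y}}, \tilde u, \tilde w)$. On the right the arguments are fixed, so the pointwise convergence $H_{\gamma_k} \to |\cdot|$ yields $J_{\gamma_k}(\tilde{\mathbf{y}}, \tilde u, \tilde w) \to J(\tilde{\mathbf{y}}, \tilde u, \tilde w)$; on the left the smooth quadratic part converges by continuity, while each Huber term obeys $H_{\gamma_k}(D_i u_{\gamma_k} - w_{i,\gamma_k}) \to |D_i u^\ast - w_i^\ast|$ by the estimate above (the argument converges and the gap $c/\gamma_k \to 0$). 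Hence $J(\mathbf{y}^\ast, u^\ast, w^\ast) \le J(\tilde{\mathbf{y}}, \tilde u, \tilde w)$ for every feasible competitor, i.e.\ $(\mathbf{y}^\ast, u^\ast, w^\ast)$ is a global solution of \eqref{eq:discrete_problem_tgv}.

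The main obstacle is this last identification step rather than the boundedness: one must control the nonsmooth term along the \emph{moving} sequence, which is exactly where the uniform (not merely pointwise) approximation estimate for the $C^2$ Huber is needed, and verifying $\bigl| |t| - H_\gamma(t) \bigr| \le c/\gamma$ on the cubic transition region $\mathcal{I}$ is the one genuinely computational point. A second, conceptual caveat concerns the statement itself: the argument shows the subsequential limit is \emph{a} global minimizer, and since the data-misfit term is nonconvex in $u$ through the Burgers solution map $u \mapsto \mathbf{y}(u)$, the global minimizer need not be unique; the identification with the prescribed $(\mathbf{y}, u, w)$ is therefore rigorous precisely when uniqueness holds, and otherwise the conclusion should be read as convergence to some global solution.
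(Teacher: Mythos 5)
Your proof follows essentially the same route as the paper's: minimality against the fixed global solution together with $H_\gamma \le |\cdot|$ gives uniform bounds through the coercive $B^{-1}$ and $\mu$ terms, Bolzano--Weierstrass extracts a convergent subsequence, continuity of the discretized state operator (via Lemma~\ref{lem:boundedness_y} and the continuity of $\mathbb{Z}(\cdot)$) gives feasibility of the limit, and passing to the limit in the objective identifies it as a global minimizer. Your two refinements --- the explicit uniform estimate $0 \le |t| - H_\gamma(t) \le c/\gamma$, which is what actually justifies $H_{\gamma_k}(D_i u_{\gamma_k}-w_{i,\gamma_k}) \to |D_i u^\ast - w_i^\ast|$ along the \emph{moving} sequence where the paper only invokes the pointwise limit $H_\gamma(a)\to|a|$, and the caveat that the argument only yields convergence to \emph{some} global solution rather than the prescribed $(\mathbf{y},u,w)$ unless the minimizer is unique --- are both correct and tighten two points the paper leaves implicit.
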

\begin{proof}
We start by recalling that the matrix $B$ is a covariance symmetric and positive definite matrix. Using spectral decomposition we have
\[
(u_\gamma-u^b)^T B^{-1}(u_\gamma-u^b)\geq \lambda_{min}(B^{-1})\parallel
 u_\gamma- u^b\parallel^2,
\]
where $\lambda_{min}(B^{-1})$ is the minimum eigenvalue of the matrix $B^{-1}$. From the definition of the objective function it follows that
\[
J_\gamma (\mathbf{y}_\gamma,u_\gamma,w_\gamma)\geq (u_\gamma-u^b)^T B^{-1}(u_\gamma-u^b)\geq \lambda_{min}(B^{-1})\parallel
 u_\gamma- u^b\parallel^2.
\]
Using the optimality of $(\mathbf{y}_\gamma,u_\gamma,w_\gamma)$ and the fact that $H_\gamma(a)\leq |a|$, for all $a\in \mathbb{R}$,
\[
J_\gamma(\mathbf{y}_\gamma,u_\gamma,w_\gamma)\leq J_\gamma(\mathbf{y},u,w)\leq J(\mathbf{y},u,w)
\]
Thus, we can conclude that
\[
\parallel u_\gamma-u^b\parallel^2\leq \dfrac{J(\mathbf{y},u,w)}{\lambda_{min}(B^{-1})}.
\]
and, therefore, the sequence $\{u_\gamma\}$ is bounded. By the Bolzano--Weierstrass Theorem there exists a convergent subsequence $\{u_{\gamma_k}\}$ whose limit is denoted by $\bar{u}$.  Using similar arguments we can conclude that
\[
\parallel w_\gamma\parallel \leq \sqrt{\dfrac{2 J(\mathbf{y},u,w)}{\mu} }
\]
which is a uniform bound for $\{w_\gamma\}$. Using again the Bolzano-Weierstrass theorem, there exists a convergent subsequence denoted by $\{w_{\gamma_k}\}$ whose limit will be denoted by $\bar{w}$. Moreover, using the definition of the vector $\mathbf{y}_\gamma$ and by Lemma~\ref{lem:boundedness_y}, the sequence $\{\mathbf{y}_\gamma\}$ is also bounded and there exists a convergent subsequence $\{\mathbf{y}_{\gamma_k} \}$ with its limit denoted by $\bar{\mathbf{y}}$.
Thus, the triplet $(\bar{\mathbf{y}},\bar{u},\bar{w})$ is a candidate for the solution of problem~\eqref{eq:discrete_problem_tgv}.

Next we will prove the feasibility of $(\bar{\mathbf{y}},\bar{u},\bar{w})$, that is, $e(\bar{\mathbf{y}},\bar{u})=0$. Recalling the state equation given in~\eqref{eq:fulldis_eq} we know that
\[
e(\mathbf{y}_{\gamma_k},u_{\gamma_k})=\mathbb{E}\mathbf{y}_{\gamma_k}+\mathbb{Z}_{\gamma_k} \mathbb{U}\mathbf{y}_{\gamma_k}-f(u_{\gamma_k})=0,
\]
where $\mathbb{Z}_{\gamma_k}=\mathbb{Z}(\mathbf{y}_{\gamma_k})$. Taking the limit as $k \to \infty$ we have
\[
\mathbb{E}\bar{\mathbf{y}}+\lim_{\gamma\to\infty}\left(\mathbb{Z}_{\gamma_k} \mathbb{U}\mathbf{y}_{\gamma_k}\right)-f(\bar{u})=0.
\]
In order to guarantee that $\bar{\mathbf{y}}$ satisfies the state equation we have to prove that
\[
\lim_{k\to \infty}\mathbb{Z}_{\gamma_k} \mathbb{U}\mathbf{y}_{\gamma_k}=\bar{\mathbb{Z}}\mathbb{U}\bar{\mathbf{y}}
\]
where $\bar{\mathbb{Z}}=\mathbb{Z}(\bar{\mathbf{y}})$. This follows from the continuity of $\mathbb{Z}(\mathbf y)$ with respect to the argument.
% Since $\mathbf{y}_{\gamma_k}\to \bar{\mathbf{y}}$ as $k\to +\infty$ and due to the definition of the matrices $\mathbb{Z}$ and $\mathbb{U}$ we get
% \[
% \lim_{k\to \infty}\mathbb{Z}_{\gamma_k} \mathbb{U}\mathbf{y}_{\gamma_k}=\bar{\mathbb{Z}}\mathbb{U}\bar{\mathbf{y}}
% \]
Consequently,
\[
\mathbb{E}\bar{\mathbf{y}}+\bar{\mathbb{Z}} \mathbb{U}\bar{\mathbf{y}}-f(\bar{u})=0.
\]

Finally, since $H_\gamma(a)\rightarrow |a|$ when $\gamma\to \infty$, we obtain
 \[
 J(\bar{\mathbf{y}},\bar{u},\bar{w})=\lim_{k \to \infty} J_{\gamma} (\mathbf{y}_{\gamma_k},u_{\gamma_k},w_{\gamma_k})\leq J(\mathbf{y},u,w).
\]
Since $(\mathbf{y},u,w)$ is a global solution of problem~\eqref{eq:discrete_problem_tgv} we conclude that $(\bar{\mathbf{y}},\bar{u},\bar{w})$ is also a global solution \qedhere.
\end{proof}

In a similar manner as in the proof of Theorem \ref{teo:opt_sys_tgv} we then obtain the following optimality system characterizing the optimal solutions of \eqref{eq:reg_discrete_problem_tgv}:
\begin{subequations} \label{eq:reg_discrete_optimality_system}
\begin{align}
\mathbb{E}\mathbf{y}_\gamma+\mathbb{U}\mathbb{Z}\mathbf{y}_\gamma-\mathbf{f}_\gamma=0 \\
\mathbb{E}^T\mathbf{p}_\gamma+\mathbb{Z}\mathbb{U}^T \mathbf{p}_\gamma + \hbox{diag}(\mathbb{U}\mathbf{y}_\gamma)\mathbf{p}_\gamma-H^TS^TR^{-1}(HS\mathbf{y}_\gamma-\mathbf{z})=0\\
B^{-1}(u_\gamma-u^b)+ \alpha D^T h_\gamma(Du_\gamma-w_\gamma)+\dfrac{1}{\Delta t}p_\gamma^1=0\\
\mu w_\gamma -  \alpha h_\gamma(Du_\gamma-w_\gamma)+\beta E^Th_\gamma(Ew_\gamma)=0
\end{align}
\end{subequations}
%{\color{red} no deberian las variables tener subindice $\gamma$?}

where $h_\gamma(\cdot)$ is given in~\eqref{eq:der_huber_c2}. Differently from system \eqref{eq:discrete_optimality_system}, the regularized optimality condition \eqref{eq:reg_discrete_optimality_system} does not involve variational inequalities. This fact facilitates the solution of the resulting nonlinear system, although enough care has still to be taken in order to get a robust and fast convergent method for its solution.

%%%%%%%%%%%%%%%%%%%%%%%%%%%%%%%%%%%%%%%%%%%%%%%
%%%%%%%%%%%%%%%%%%%%%%%%%%%%%%%%%%%%%%%%%%%%%%%

\section{Reduced primal--dual Newton type method}
The numerical solution of the data assimilation problem can in principle be carried out by solving the optimality system or, alternatively, by using iterative methods for the solution of the optimization problem. In this paper, we focus our attention on the use of an iterative Newton--type method, with the additional use of auxiliary dual multipliers for improving the robustness of the approach. The proposed algorithm has the following ingredients:
\begin{enumerate}
\item Use of Newton's method for optimization problems;
\item Use of primal and dual variables for the solution of a saddle point system in order to get search directions;
\item Project the dual variables and get a modified second order matrices to obtain descent directions;
\item Use of polynomial line--search rule for the step length.
\end{enumerate}

We start by recalling Newton's method for optimization problems, which is given through the following steps:
\begin{algorithm}[H]
\caption{Newton's method}
\label{alg:newton}
\begin{algorithmic}[1]
\State Initialize $k=0$, $u_0$
\Repeat
\State Compute $\mathbf{y}^k$ the solution of the state equation given in~\eqref{eq:fulldis_eq}.
\State Compute $mathbf{p}^k$ the solution of the adjoint equation given in~\eqref{eq:adjoint_state}.
\State Solve the system
{\small{\[
\left(\begin{array}{cc}
\mathcal{L}''_{(\mathbf{y},u,w)}(\mathbf{y}^k,u^k,w^k,p^k)& e'(\mathbf{y}^k,u^k,w^k)^T\\
e' (\mathbf{y}^k,u^k,w^k)& 0
\end{array}\right)\left(\begin{array}{c}
\left(\begin{array}{c}
\delta_y\\ \delta_u\\ \delta_w
\end{array}\right)\\ \delta_\pi
\end{array}\right)=\left(\begin{array}{c}
0\\ e_u(\mathbf{y},u,w)^T p - J_u(\mathbf{y}^k,u^k,w^k)\\e_w(\mathbf{y},u,w)^T p - J_w(\mathbf{y}^k,u^k,w^k)\\0
\end{array}\right)
\]}}
\State Set $u^{k+1}=u^k+\delta_u$, $w^{k+1}=u^k+\delta_w$.
\State $k\gets k+1$.
\Until{\hbox{Stopping criteria}}
\end{algorithmic}
\end{algorithm}
\noindent where $e'(\mathbf{y},u,w)(v_1,v_2,v_3) =e_\mathbf{y}(\mathbf{y},u,w)v_1 +e_u(\mathbf{y},u,w)v_2 +e_w(\mathbf{y},u,w)v_3$ and the Lagrangian is given by
% In the remainder of the section we present how to apply this algorithm to our problem, and the main aspects of this application, like the derivatives of the state equation, the Lagrangian and the different modifications we perform in order to increase the stability and get descent directions.
% The first step is to compute the Lagrangian of the problem and its derivatives. The Lagrangian is given by
\begin{align*}
\mathcal{L}(\mathbf{y},u,w,\mathbf{p})&{}=\dfrac{1}{2}(SH\mathbf{y}-\mathbf{z})^T R^{-1}(SH\mathbf{y}-\mathbf{z})+\dfrac{1}{2}(u-u^b)^TB^{-1}(u-u^b)+\dfrac{\mu}{2}w^Tw\\
&{}+\alpha \sum_{i=1}^{n-1} H_\gamma(D_iu-w_i)+ \beta \sum_{i=1}^{n-1} H_\gamma(E_iw) -\mathbf{p}^T(\mathbb{E}\mathbf{y}+\mathbb{Z}(\mathbf{y})\mathbb{U}\mathbf{y}-\mathbf{f}(u)),
\end{align*}
whose first derivative is
\[
\nabla_{(\mathbf{y},u,w)}\mathcal{L}(\mathbf{y},u,w,\mathbf{p})=\left[\begin{array}{c}
H^TS^TR^{-1}(HS\mathbf{y}-\mathbf{z})-\mathbb{E}^T\mathbf{p}-\mathbb{Z}\mathbb{U}^T\mathbf{p}-\hbox{diag}(\mathbb{U}\mathbf{y})\mathbf{p}\\
(\nicefrac{1}{\Delta t})p^1+B^{-1}(u-u^b)+\alpha D^Th_\gamma(Du-w)\\
\mu w -\alpha h_\gamma(Du-w)+\beta h_\gamma(Ew)
\end{array}\right].
\]
Exploiting the structure of the first derivative of the Lagrangian we include dual variables in order to transform the system into a saddle point problem. This technique is known to increase the stability of the system (see \cite{chan1999nonlinear}). Therefore, we add the variables $q_1$ and $q_2$ and get the following system
\[
\nabla_{(\mathbf{y},u,w)}\mathcal{L}(\mathbf{y},u,w,\mathbf{p})=\left[\begin{array}{c}
H^TS^TR^{-1}(HS\mathbf{y}-\mathbf{z})-\mathbb{E}^T\mathbf{p}-\mathbb{Z}\mathbb{U}^T\mathbf{p}-\hbox{diag}(\mathbb{U}\mathbf{y})\mathbf{p}\\
(\nicefrac{1}{\Delta t})p^1+B^{-1}(u-u^b)+\alpha D^Tq_1\\
\mu w-\alpha q_1+\beta q_2\\
q_1-h_\gamma(Du-w)\\
q_2-h_\gamma(Ew)
\end{array}\right]=0.
\]
The second derivative of the Lagrangian is
\[
\nabla^2_{(\mathbf{y},u,w,q_1,q_2)}\mathcal{L}(\mathbf{y},u,w,q_1,q_2,\mathbf{p})=\left[\begin{array}{ccccc}
\Psi&0&0&0&0\\
0&B^{-1}&0&\alpha D^T&0\\
0&0&\mu\mathbb{I}&-\alpha\mathbb{I}&\beta E^T\\
0&-h'_\gamma(Du-w)D&h'_\gamma(Du-w)&\mathbb{I}&0\\
0&0&-h'_\gamma(Ew)E&0&\mathbb{I}
\end{array}\right],
\]
where $h'_\gamma(\cdot)$ is the second derivative of the Huber's regularization given in~\eqref{eq:2der_huber_c2} and the matrix
\begin{equation}\label{eq:PSI}
\Psi=H^TS^TR^{-1}HS-\mathbb{K}
\end{equation}
with
{\scriptsize{\begin{equation}\label{eq:UP}
\mathbb{K}=\left[\begin{array}{ccccc}
0&0&0&\cdot&0\\
0&\hbox{diag}(U^Tp^2)+\hbox{diag}(U p^2)&0&\cdot &0\\
0&0&\hbox{diag}(U^Tp^3)+\hbox{diag}(U p^3)&\cdot&0\\
\vdots&\vdots&\vdots&\ddots&\vdots\\
0&0&0&\cdots& \hbox{diag}(U^Tp^{N_t})+\hbox{diag}(U p^{N_t})
\end{array}
\right].
\end{equation}}}
By analyzing the elements of the diagonal of the second derivative given by \eqref{eq:2der_huber_c2} it can be verified that there are a lot of zero entries, which causes ill-conditioning of the matrix, with the corresponding algorithmic robustness issues. Moreover, the positive definitness of the matrix does not necessarily hold. For these reasons, we modify the matrix in such a way that better conditioning properties and postive definiteness are obtained. To do so, we consider the primal dual formulation of the problem and perform a projection of the dual variables (see ~\cite{HintermuellerStadler2007}).

Specifically, this process consists in replacing the term
\[
\dfrac{(Du-w)\odot (Du-w)}{|Du-w|^3},
\]
which appears in the second derivative of the Huber's regularization, by the following term
\[
\dfrac{q_1}{\max\{1,|q_1|\}}\odot \dfrac{Du-w}{|Du-w|^2}.
\]
where $(u\odot w)$ is the component-wise product of the vectors $u$ and $w$. Thus, $h'_\gamma(Du-w)$ is replaced by the diagonal matrix $Q_1$ given by
{\small{\begin{equation}\label{eq:Q1}
\hbox{diag}(Q_1)=\left\{\begin{array}{ll}
\dfrac{1}{|\zeta|}-\dfrac{q_1}{\max\{1,|q_1|\}}\odot\dfrac{\zeta}{|\zeta|^2}&\hbox{if }\zeta_i\in\mathcal{A}\\
\gamma &\hbox{if }\zeta_i\in\mathcal{B}\\
\left(1-\dfrac{\gamma}{2}\theta_\gamma^2\right)\left[\dfrac{1}{|\zeta|}-\dfrac{q_1}{\max\{1,|q_1|\}}\odot\dfrac{\zeta}{|\zeta|^2}\right]+\gamma^2\theta_\gamma\dfrac{\zeta}{|\zeta|}\odot\dfrac{\zeta}{|\zeta|}&\hbox{if }\zeta_i\in\mathcal{I}\\
\end{array}
\right.
 \end{equation}}}
where $\zeta_i=D_iu-w_i$. Moreover, $h'_\gamma(Ew)$ is replaced by the diagonal matrix $Q_2$ given by
 {\small{\begin{equation}\label{eq:Q2}
\hbox{diag}(Q_2)=\left\{\begin{array}{ll}
\dfrac{1}{|Ew|}-\dfrac{q_2}{\max\{1,|q_2|\}}\odot\dfrac{Ew}{|Ew|^2}&\hbox{if }E_iw\in\mathcal{A}\\
\gamma &\hbox{if }E_iw\in\mathcal{B}\\
\left(1-\dfrac{\gamma}{2}\theta_\gamma^2\right)\left[\dfrac{1}{|Ew|}-\dfrac{q_2}{\max\{1,|q_2|\}}\odot\dfrac{Ew}{|Ew|^2}\right]+\gamma^2\theta_\gamma\dfrac{Ew}{|Ew|}\odot\dfrac{Ew}{|Ew|}&\hbox{if }E_iw\in\mathcal{I}\\
\end{array}
\right.
 \end{equation}}}
where the division, $\max$ and $|\cdot|$ are component-wise operations. It is worth to remark that in the last term of the third case of the $Q_1$ and $Q_2$ matrices we do not perform the projection.

In this way, the modified second derivative of the Lagrangian used in the method is
\[
\nabla^2_{(\mathbf{y},u,w,q_1,q_2)}\mathcal{L}(\mathbf{y},u,w,q_1,q_2,\mathbf{p})=\left[\begin{array}{ccccc}
\Psi&0&0&0&0\\
0&B^{-1}&0&\alpha D^T&0\\
0&0&\mu \mathbb{I}&-\alpha\mathbb{I}&\beta E^T\\
0&-Q_1D&Q_1&\mathbb{I}&0\\
0&0&-Q_2E&0&\mathbb{I}
\end{array}\right].
\]
On the other hand, the derivatives of the state equation are given by
\begin{equation}\label{eq:Xi}
e_y(\mathbf{y},u)
= \mathbb{E}+\mathbb{Z}\mathbb{U}+\hbox{diag}(\mathbb{U}\mathbf{y})=: \Xi
\end{equation}
and
\begin{equation}\label{eq:Upsilon}
e_u(\mathbf{y},u)=
-(\nicefrac{1}{\Delta t})\left[\begin{array}{c}
\mathbb{I}\\0\\ \vdots\\0
\end{array}
\right]=:\Upsilon
\end{equation}
Furthermore, since the state equation does not depend on $\mathbf{p},q_1$ nor $q_2$, its derivatives with respect to these variables are equal to zero.

Now, the system that defines the method is given by:
\begin{equation}\label{eq:Htnewton_tgv}
\left[\begin{array}{cccccc}
\Psi&0&0&0&0&\Xi^T\\0&B^{-1}&0&\alpha D^T&0& \Upsilon^T\\0&0&\mu\mathbb{I}&-\alpha\mathbb{I}&\beta E^T&0\\0 &-Q_1D&Q_1&\mathbb{I}&0&0\\ 0&0&-Q_2E&0&\mathbb{I}&0\\ \Xi&\Upsilon&0&0&0&0
\end{array}
\right]\left[\begin{array}{c}
\delta_y\\ \delta_u\\ \delta_w \\ \delta_{q_1}\\ \delta_{q_2}\\ \delta_{\pi}
\end{array}
\right]=\left[\begin{array}{c}
0\\ -\nicefrac{p^1}{\Delta t}-B^{-1}(u-u^b)-\alpha D^Tq_1\\-\mu w+ \alpha q_1-\beta E^Tq_2\\-q_1+h_\gamma(Du-w)\\ -q_2+h_\gamma(Ew)\\0
\end{array}
\right]
\end{equation}
% In the later section we will prove that this method generates indeed descent directions.

% The method is given through the following steps:
%
% \begin{algorithm}
% \caption{Globalized primal--dual Newton method}
% \label{alg:gnewton_tv}
% \begin{algorithmic}[1]
% \State Initialize $k=0$, $u_0$, $B_0$
% \Repeat
% \State Compute $y^k$ the solution of the state equation given in~\eqref{eq:fulldis_eq}.
% \State Compute $p^k$ the solution of the adjoint equation given in~\eqref{eq:adjoint_state}.
% \State Solve the system given in~\eqref{eq:Htnewton_tgv}.
% \State Compute the step length $s$ with a polynomial line--search procedure.
% \State Update $u^{k+1}=u^k + s\delta_u$, $w^{k+1}=w^k + s\delta_w$, ${q_1}^{k+1}=q_1^k + s\delta_{q_1}$ and $q_2^{k+1}=q_2^k + s\delta_{q_2}$
% \State $k\gets k+1$.
% \Until{\hbox{A stopping criteria is satisfied}}
% \end{algorithmic}
% \end{algorithm}

\subsection{Reduced method and matrix properties}
Before presenting the convergence result of the proposed algorithm we start by exhibiting and analyzing the reduced version of it.
% We recall the matrix presented in equation~\eqref{eq:Htnewton_tgv} and given by the following expression:
% \[
% \left[\begin{array}{cccccc}
% \Psi&0&0&0&0&\Xi^T\\0&B^{-1}&0&\alpha D^T&0& \Upsilon^T\\0&0&0&-\alpha\mathbb{I}&\beta E^T&0\\0 &-Q_1D&Q_1&\mathbb{I}&0&0\\ 0&0&-Q_2E&0&\mathbb{I}&0\\ \Xi&\Upsilon&0&0&0&0
% \end{array}
% \right]
% \]
% where $\Xi$ is given in~\eqref{eq:Xi} and $\Upsilon$ in~\eqref{eq:Upsilon}.
From Lemma~\ref{lm:e_y_invertible} we know that the matrix $\Xi$ in \eqref{eq:Xi} is invertible and, therefore, the following equalities hold:
\begin{align*}
\delta_{q_1}&{}=Q_1D\delta_u-Q_1\delta_w -q_1+h_\gamma(Du-w),\\
\delta_{q_2}&{}=Q_2E\delta_w - q_2 + h_\gamma(Du-w),\\
\delta_y&{}=-(\Xi)^{-1}\Upsilon\delta_u,\\
\delta_{\pi}&{}=(\Xi)^{-T}\Psi(\Xi)^{-1}\Upsilon \delta_u.
\end{align*}
System \eqref{eq:Htnewton_tgv} can then be rewritten as follows:
\begin{align*}
(B^{-1}+\alpha D^TQ_1D+\Upsilon^T(\Xi)^{-T}&{}(\Xi)^{-1}\Upsilon)\delta_u-\alpha D^TQ_1\delta_w\\
&{}=-\nicefrac{1}{\Delta t}p^1-B^{-1}(u-u^b)-\alpha D^T h_\gamma(Du-w),\\
-\alpha Q_1D\delta_u + (\mu \mathbb{I}+\alpha Q_1+\beta E^TQ_2E)\delta_w&{}=-\mu w+\alpha h_\gamma(Du-w)-\beta E^Th_\gamma(Ew),
\end{align*}
or, equivalently
{{\begin{align}
\nonumber \left[
\begin{array}{cc}
B^{-1}+\alpha D^TQ_1D + \Upsilon^T(\Xi)^{-T}\Psi(\Xi)^{-1}\Upsilon& -\alpha D^TQ_1\\
-\alpha Q_1D&\mu \mathbb{I}+\alpha Q_1+ \beta E^TQ_2E
\end{array}\right]\left[\begin{array}{c}
\delta_u\\ \delta_w
\end{array}\right] \hspace{2cm} &{}\\
\label{eq:Pi_system}=\left[\begin{array}{c}
-\nicefrac{p^1}{\Delta t}-B^{-1}(u-u^b)-\alpha D^Th_\gamma(Du-w)\\
\mu w+ \alpha h_\gamma(Du-w)-\beta E^Th_\gamma(Ew)
\end{array}\right]&{}
\end{align}}}
Now, defining the matrix
\begin{equation}\label{eq:Pimatrix}
\Pi=\left[
\begin{array}{cc}
B^{-1}+\alpha D^TQ_1D + \Upsilon^T(\Xi)^{-T}\Psi(\Xi)^{-1}\Upsilon& -\alpha D^TQ_1\\
-\alpha Q_1D&\mu \mathbb{I}+\alpha Q_1+ \beta E^TQ_2E
\end{array}\right],
\end{equation}
and the vector
\[
\phi(u,w)=-\left[\begin{array}{c}
-\nicefrac{p^1}{\Delta t}-B^{-1}(u-u^b)-\alpha D^Th_\gamma(Du-w)\\
-\mu w+ \alpha h_\gamma(Du-w)-\beta E^Th_\gamma(Ew)
\end{array}\right].
\]
%the reduced system that determines the direction is
%\begin{equation}\label{eq:pisystem}
%\Pi[\delta_u,\delta_w]^T=\phi(u,w).
%\end{equation}
Henceforth, we focus on the proof of the main features of the matrix $\Pi$ that can guarantee that $[\delta_u,\delta_w]^T$ is indeed a descent direction. We start proving its symmetry.
\begin{proposition}
The matrix $\Pi$ given in~\eqref{eq:Pimatrix} is symmetric.
\end{proposition}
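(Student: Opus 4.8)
The plan is to verify $\Pi^{T}=\Pi$ block by block, exploiting the $2\times 2$ block structure of \eqref{eq:Pimatrix}. Denote the diagonal blocks by $A = B^{-1}+\alpha D^{T}Q_1D + \Upsilon^{T}(\Xi)^{-T}\Psi(\Xi)^{-1}\Upsilon$ and $\widetilde{B} = \mu \mathbb{I}+\alpha Q_1+ \beta E^{T}Q_2E$, and the off-diagonal blocks by $C=-\alpha D^{T}Q_1$ (upper right) and $C'=-\alpha Q_1D$ (lower left). Symmetry of $\Pi$ is then equivalent to the three conditions $A^{T}=A$, $\widetilde{B}^{T}=\widetilde{B}$ and $C^{T}=C'$. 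Throughout I would repeatedly use that $Q_1$ and $Q_2$ from \eqref{eq:Q1}--\eqref{eq:Q2} are diagonal, hence symmetric, and that $B^{-1}$ and $R^{-1}$ are symmetric because $B$ and $R$ are covariance matrices.

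Two of the three conditions are immediate. For the block $\widetilde{B}$, the summands $\mu\mathbb{I}$ and $\alpha Q_1$ are symmetric, while $(E^{T}Q_2E)^{T} = E^{T}Q_2^{T}E = E^{T}Q_2E$ by symmetry of $Q_2$, so $\widetilde{B}^{T}=\widetilde{B}$. For the off-diagonal relation, $C^{T} = (-\alpha D^{T}Q_1)^{T} = -\alpha Q_1^{T}D = -\alpha Q_1D = C'$, again since $Q_1$ is diagonal.

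The only term requiring care sits in the block $A$, namely $\Upsilon^{T}(\Xi)^{-T}\Psi(\Xi)^{-1}\Upsilon$. Its two companions $B^{-1}$ and $\alpha D^{T}Q_1D$ are symmetric as before, the latter because $(D^{T}Q_1D)^{T}=D^{T}Q_1^{T}D=D^{T}Q_1D$. Transposing the remaining term yields $\Upsilon^{T}(\Xi)^{-T}\Psi^{T}(\Xi)^{-1}\Upsilon$, so this congruence transformation of $\Psi$ is symmetric precisely when $\Psi^{T}=\Psi$; this is the crux of the argument. Recalling $\Psi = (SH)^{T}R^{-1}(SH)-\mathbb{K}$ from \eqref{eq:PSI}, the first summand satisfies $((SH)^{T}R^{-1}(SH))^{T} = (SH)^{T}(R^{-1})^{T}(SH) = (SH)^{T}R^{-1}(SH)$ by symmetry of $R^{-1}$. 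For the second summand I would observe from \eqref{eq:UP} that $\mathbb{K}$ is block-diagonal with blocks of the form $\hbox{diag}(U^{T}p^{j})+\hbox{diag}(Up^{j})$; each such block is itself a diagonal matrix and therefore symmetric, whence $\mathbb{K}^{T}=\mathbb{K}$. Combining these gives $\Psi^{T}=\Psi$, hence $A^{T}=A$, and the three conditions together yield $\Pi^{T}=\Pi$. The argument is short; the principal subtlety is recognizing that the symmetry of the term $\Upsilon^{T}(\Xi)^{-T}\Psi(\Xi)^{-1}\Upsilon$ hinges entirely on that of $\Psi$, which in turn rests on the diagonal structure of the blocks composing $\mathbb{K}$.
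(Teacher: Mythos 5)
Your proof is correct and follows essentially the same route as the paper: a block-by-block verification using the symmetry of the covariance matrices $B^{-1}$ and $R^{-1}$, the diagonality of $Q_1$, $Q_2$ and $\mathbb{K}$, and the observation that the off-diagonal blocks are transposes of one another. If anything, your treatment of the term $\Upsilon^{T}\Xi^{-T}\Psi\Xi^{-1}\Upsilon$ as a congruence transformation of $\Psi$ is stated a bit more cleanly than in the paper.
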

\begin{proof}
Since $\Pi$ is a block--matrix we know that
\[
\Pi^T=\left[\begin{array}{cc}
\Pi_{11}^T&\Pi_{21}^T\\
\Pi_{12}^T&\Pi_{22}^T
\end{array}\right]
\]
We analyze each of the components starting with $\Pi_{11}$, which is given by
\begin{align*}
\Pi_{11}^T&{}=\left(B^{-1}+\beta D^TQ_1D+\Upsilon^T(\Xi)^{-T}\Psi(\Xi)^{-1}\Upsilon\right)^T\\
&{}=(B^T)^{-1}+\beta D^TQ_1^TD+\Upsilon^T(\Xi)^{-T}\Psi^T(\Xi)^{-1}\Upsilon
\end{align*}
Since $B$ is a covariance matrix, it is symmetric and its inverse as well. It remains to prove that the matrices $Q_1$ and $\Psi$ are also symmetric. For the matrix $Q_1$ we know by definition that it is a diagonal matrix. Finally we are going to analyze the matrix $\Psi$, given by
\begin{align*}
\Psi^T = \left(H^TS^TR^{-1}SH-\mathbb{K}\right)^T = H^TS^TR^{-T}-\mathbb{K}^T.
\end{align*}
We notice that $R$ is also a covariance matrix and, therefore symmetric. Finally, $\mathbb{K}$ is a diagonal matrix (see eq. \eqref{eq:Q1}), $\Psi$ is also symmetric and therefore $\Pi_{11}$ is symmetric.

We now focus on
\[
\Pi_{22}^T=\mu \mathbb{I}+\alpha Q_1^T+ \beta E^TQ_2^TE.
\]
Since $Q_1$ and $Q_2$ are diagonal matrices, it immediately follows that $\Pi_{22}$ is symmetric.

By definition we have
\[
\Pi^T=\left[\begin{array}{cc}
\Pi_{11}^T&(-\alpha Q_1D)^T\\
(-\alpha D^TQ_1)^T&\Pi_{22}^T
\end{array}\right]=\left[\begin{array}{cc}
\Pi_{11}&(-\alpha D^TQ_1)\\
(-\alpha Q_1D)&\Pi_{22}
\end{array}\right]=\Pi
\]
and the proof is complete. \qedhere.
\end{proof}
Now we are going to prove that $\Pi$ is positive definite, and therefore that the directions given in~\eqref{eq:Pi_system} are indeed descent directions for the data assimilation problem. Before stating the main result, we prove some required lemmata.

% The following Lemma is an intermediate result needed in order to prove the positive definiteness of the matrix $\Pi$.
\begin{lemma}\label{lm:lyy_bound}
If the term $\parallel H^TS^TR^{-1}(SH\mathbf{y}-\mathbf{z})\parallel_\infty$ is small enough, then there exists a constant $\kappa>0$ such that for all $\xi\in\mathbb{R}^m\backslash\{0\}$ we have
\[
\xi^T \mathcal{L}''_{(\mathbf{y,y})}(\mathbf{y},u,w,\mathbf{p})\xi\geq \kappa \parallel \xi \parallel^2.
\]
\end{lemma}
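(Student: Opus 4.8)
The plan is to read off from \eqref{eq:PSI} and \eqref{eq:UP} that the block in question is $\mathcal{L}''_{(\mathbf{y},\mathbf{y})}(\mathbf{y},u,w,\mathbf{p}) = \Psi = H^TS^TR^{-1}SH - \mathbb{K}$, and to treat $\Psi$ as a perturbation of the data--misfit Hessian $H^TS^TR^{-1}SH$ (which is positive semidefinite) by the matrix $\mathbb{K}$, whose size is governed entirely by the adjoint state $\mathbf{p}$. Accordingly, for $\xi\in\mathbb{R}^m$ I would split
\[
\xi^T\Psi\xi = \xi^TH^TS^TR^{-1}SH\,\xi - \xi^T\mathbb{K}\,\xi
\]
and estimate the two terms separately.

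First I would bound the data term from below. Since $R^{-1}$ is symmetric positive definite as the inverse of a covariance matrix, and assuming $SH$ has full column rank, one obtains $\xi^TH^TS^TR^{-1}SH\,\xi \geq \lambda_{\min}(R^{-1})\,\sigma_{\min}(SH)^2\,\|\xi\|^2 =: \kappa_0\|\xi\|^2$ with $\kappa_0>0$ independent of $\xi$. This supplies the uniform positive contribution.

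Next I would show that $\mathbb{K}$ is a small, controllable perturbation. From its block--diagonal form in \eqref{eq:UP}, each nonzero block is the diagonal matrix $\hbox{diag}(U^Tp^j)+\hbox{diag}(Up^j)$, so that, up to norm--equivalence constants on $\mathbb{R}^m$,
\[
\|\mathbb{K}\| \leq \max_{j}\|U^Tp^j + Up^j\|_\infty \leq 2\|U\|\,\|\mathbf{p}\|.
\]
Invoking Lemma~\ref{lm:p_inf}, $\|\mathbf{p}\| \leq \rho\,\|H^TS^TR^{-1}(SH\mathbf{y}-\mathbf{z})\|$, whence $\|\mathbb{K}\| \leq C\,\|H^TS^TR^{-1}(SH\mathbf{y}-\mathbf{z})\|$ for a constant $C$ independent of $\xi$, $\mathbf{y}$ and $u$. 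Consequently $|\xi^T\mathbb{K}\,\xi| \leq \|\mathbb{K}\|\,\|\xi\|^2$, and under the smallness hypothesis on $\|H^TS^TR^{-1}(SH\mathbf{y}-\mathbf{z})\|_\infty$ (equivalent, on a finite--dimensional space, to smallness in the Euclidean norm) one can force $\|\mathbb{K}\| \leq \kappa_0/2$. Combining the two estimates gives $\xi^T\Psi\,\xi \geq (\kappa_0-\|\mathbb{K}\|)\|\xi\|^2 \geq \tfrac{\kappa_0}{2}\|\xi\|^2$, so the claim holds with $\kappa := \kappa_0/2$.

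The main obstacle I anticipate is the very first step: securing the strictly positive lower bound $\kappa_0$ on the data--misfit Hessian. If $SH$ fails to be injective (the typical partial--observation situation), then $H^TS^TR^{-1}SH$ is only positive \emph{semi}definite and cannot by itself dominate the indefinite perturbation $\mathbb{K}$ on the kernel of $SH$; one then needs either a full--column--rank assumption on the observation operator or an additional coercive term. Everything else---quantifying $\|\mathbb{K}\|$ through Lemma~\ref{lm:p_inf} and passing from ``small in $\|\cdot\|_\infty$'' to ``small in $\|\cdot\|$''---is routine once that positivity is secured.
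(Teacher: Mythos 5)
Your proposal follows essentially the same route as the paper: the same splitting $\xi^T\Psi\xi = \xi^TH^TS^TR^{-1}SH\xi - \xi^T\mathbb{K}\xi$, the same bound $\|\mathbb{K}\|\le 2\|U\|\,\|\mathbf p\|$ controlled via Lemma~\ref{lm:p_inf}, and the same smallness argument to absorb the perturbation. The only difference is that you make explicit the full--column--rank requirement on $SH$ needed for the lower bound $\kappa_0>0$, which the paper leaves implicit by simply taking $c$ to be the smallest eigenvalue of $H^TS^TR^{-1}SH$.
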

\begin{proof}
The Lagrangian of problem~\eqref{eq:reg_discrete_problem_tgv} given by the following expression
\begin{align*}
\mathcal{L}(\mathbf{y},u,w,\mathbf{p})&{}=\dfrac{1}{2}(SH\mathbf{y}-\mathbf{z})^TR^{-1}(SH\mathbf{y}-\mathbf{z})+\dfrac{1}{2}(u-u^b)^TB^{-1}(u-u^b)+\dfrac{\mu}{2} w^Tw\\
&{}+\alpha \sum_{i=1}^{n-1}H_\gamma(D_iu-w_i)+\beta\sum_{i=1}^{n-1}H_\gamma(E_iw) -\mathbf{p}^T(\mathbb{E}\mathbf{y}+\mathbb{Z}(\mathbf{y})\mathbb{U}\mathbf{y}-\mathbf{f}(u)).
\end{align*}
The first derivative of the Lagrangian with respect of $\mathbf{y}$ is given by
\[
\mathcal{L}'_{\mathbf{y}}(\mathbf{y},u,w,\mathbf{p})=H^TS^TR^{-1}(SH\mathbf{y}-\mathbf{z})-\mathbb{E}^T\mathbf{p}-\mathbb{Z}\mathbb{U}^T\mathbf{p}-\hbox{diag}(\mathbb{U}\mathbf{y})\mathbf{p}.
\]
Furthermore, the second derivative of the Lagrangian with respect to $\mathbf{y}$ is as follows
\[
\mathcal{L}''_{(\mathbf{y,y})}(\mathbf{y},u,w,\mathbf{p})=H^TS^TR^{-1}SH-\mathbb{K}
\]
where $\mathbb{K}$ is given in~\eqref{eq:UP}. Using the definition of the matrix $\mathbb{K}$ and multiplying both sides by $\xi\in\mathbb{R}^m\setminus \{0\}$ we have:
\begin{align*}
\xi^T\mathcal{L}''_{(\mathbf{y,y})}\xi &{}=\xi^TH^TS^TR^{-1}SH\xi-\xi^T\mathbb{K}\xi \geq c\parallel \xi\parallel^2-2\displaystyle\sum_{i=1}^{N_t}\xi_i^T(\mathbb{U}_ip^i) \xi_i\\
&{}=c\parallel \xi\parallel^2 - 2\displaystyle\sum_{i=1}^{N_t}\sum_{j=1}^n(U_{ij})p^i_j(\xi^i_j)^2 \geq c\parallel \xi\parallel^2-2\parallel \mathbb{U}\mathbf{p}\parallel_\infty \sum_{i=1}^{N_t}\sum_{j=1}^n(\xi_j^i)^2\\
&{}\geq c \parallel \xi \parallel^2 - 2 \parallel \mathbb{U}\parallel_\infty \parallel \mathbf{p}\parallel_\infty\parallel \xi\parallel^2\\
&{}\geq \left(c-2\rho \parallel \mathbb{U}\parallel_\infty \parallel H^TS^TR^{-1}(SH\mathbf{y}-\mathbf{z})\parallel_\infty\right) \parallel \xi\parallel ^2
\end{align*}
where $c$ is the smallest eigenvalue of the matrix $H^TS^TR^{-1}SH$. The last inequality was obtained by applying Lemma~\ref{lm:p_inf}. Then, we define
\[
\kappa=c-2\rho \parallel \mathbb{U}\parallel_\infty \parallel H^TS^TR^{-1}(SH\mathbf{y}-\mathbf{z})\parallel_\infty.
\]
By setting
\[
\parallel H^TS^TR^{-1}(SH\mathbf{y}-\mathbf{z})\parallel_\infty\leq \dfrac{c}{2\rho}
\]
we guarantee that $\kappa>0$ and thus we get the desired result.\qedhere
\end{proof}

%Now we can prove that the matrix $\Pi$ is in fact positive definite.
\begin{proposition} \label{prop: Pi is positive}
The matrix $\Pi$ defined in~\eqref{eq:Pimatrix} is positive definite.
\end{proposition}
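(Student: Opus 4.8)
The plan is to show $\Pi$ is positive definite by testing it against an arbitrary nonzero vector $(\delta_u, \delta_w)$ and bounding the resulting quadratic form from below. Writing out $(\delta_u,\delta_w)^T \Pi (\delta_u,\delta_w)^T$, the off-diagonal blocks $-\alpha D^T Q_1$ and $-\alpha Q_1 D$ contribute the cross term $-2\alpha\, \delta_u^T D^T Q_1 \delta_w$, while the diagonal blocks contribute the terms coming from $B^{-1} + \alpha D^T Q_1 D + \Upsilon^T(\Xi)^{-T}\Psi(\Xi)^{-1}\Upsilon$ acting on $\delta_u$ and from $\mu\mathbb{I} + \alpha Q_1 + \beta E^T Q_2 E$ acting on $\delta_w$. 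The key observation is that the $\alpha$-terms assemble into a perfect square: collecting $\alpha\, \delta_u^T D^T Q_1 D\delta_u - 2\alpha\,\delta_u^T D^T Q_1 \delta_w + \alpha\, \delta_w^T Q_1 \delta_w$ gives $\alpha (D\delta_u - \delta_w)^T Q_1 (D\delta_u-\delta_w)$, which is nonnegative provided $Q_1$ is positive semidefinite.

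So the first step is to verify that the diagonal matrices $Q_1$ and $Q_2$ from \eqref{eq:Q1}--\eqref{eq:Q2} have nonnegative entries. The branches on $\mathcal{B}$ (value $\gamma>0$) and the leading $\gamma^2\theta_\gamma (\zeta/|\zeta|)\odot(\zeta/|\zeta|)$ contribution on $\mathcal{I}$ are clearly nonnegative; for the $\mathcal{A}$-branch and the bracketed part of the $\mathcal{I}$-branch one checks that the projection $q_1/\max\{1,|q_1|\}$ has modulus at most one, so that $\tfrac{1}{|\zeta|} - \tfrac{q_1}{\max\{1,|q_1|\}}\odot \tfrac{\zeta}{|\zeta|^2} \geq \tfrac{1}{|\zeta|} - \tfrac{1}{|\zeta|} = 0$ componentwise. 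This is precisely the purpose of the dual projection, and it guarantees $Q_1, Q_2 \succeq 0$.

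With $Q_1, Q_2 \succeq 0$ in hand, I would then lower-bound the full quadratic form by discarding the nonnegative square term $\alpha(D\delta_u-\delta_w)^T Q_1 (D\delta_u-\delta_w)$ together with the nonnegative $\beta\, \delta_w^T E^T Q_2 E\delta_w$ and $\mu\|\delta_w\|^2$ pieces, retaining
\[
(\delta_u,\delta_w)^T \Pi (\delta_u,\delta_w)^T \geq \delta_u^T\!\left(B^{-1} + \Upsilon^T(\Xi)^{-T}\Psi(\Xi)^{-1}\Upsilon\right)\!\delta_u + \mu\|\delta_w\|^2.
\]
Here $B^{-1}$ is positive definite as the inverse of a covariance matrix, and $\Upsilon^T(\Xi)^{-T}\Psi(\Xi)^{-1}\Upsilon$ is positive semidefinite because $\Psi = \mathcal{L}''_{(\mathbf{y},\mathbf{y})}$ is positive definite by Lemma~\ref{lm:lyy_bound} (under the smallness hypothesis on $\|H^TS^TR^{-1}(SH\mathbf{y}-\mathbf{z})\|_\infty$) and $(\Xi)^{-1}\Upsilon$ is injective on its range. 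Consequently the $\delta_u$-block is bounded below by $\lambda_{\min}(B^{-1})\|\delta_u\|^2$, and together with the $\mu\|\delta_w\|^2$ term we obtain strict positivity for $(\delta_u,\delta_w)\neq 0$.

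The main obstacle is the sign analysis of $Q_1$ and $Q_2$ in the active and intermediate regions, since this is exactly where the projection modification enters and where a naive Huber Hessian would fail to be positive semidefinite; establishing the componentwise inequality $\tfrac{1}{|\zeta|} - \tfrac{q_1}{\max\{1,|q_1|\}}\odot\tfrac{\zeta}{|\zeta|^2}\geq 0$ is the crux of the whole argument. A secondary point requiring care is invoking Lemma~\ref{lm:lyy_bound} so that $\Psi$ is coercive rather than merely symmetric; this is what upgrades the $\Upsilon$-coupling term from semidefinite to harmless and, in combination with $B^{-1}$, secures definiteness in the $\delta_u$ variable.
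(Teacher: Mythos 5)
Your proposal is correct and follows essentially the same route as the paper's own proof: expanding the quadratic form, recognizing the $\alpha$-terms as $\alpha\|Q_1^{1/2}(D\delta_u-\delta_w)\|^2$, establishing $Q_1,Q_2\succeq 0$ by the three-case analysis on $\mathcal{A},\mathcal{B},\mathcal{I}$ via the bound $|q_1/\max\{1,|q_1|\}|\leq 1$, and invoking Lemma~\ref{lm:lyy_bound} to handle the $\Upsilon^T(\Xi)^{-T}\Psi(\Xi)^{-1}\Upsilon$ block, arriving at the same lower bound $\lambda_{\min}(B^{-1})\|\delta_u\|^2+\mu\|\delta_w\|^2$. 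You also correctly identify the componentwise sign analysis of the projected Hessian as the crux, which is exactly where the paper concentrates its effort.
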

\begin{proof}
Multiplying on both sides of $\Pi$ by a block vector $(x,y)$ we obtain that
\begin{align*}
  (x,y) \Pi \begin{pmatrix}
  x\\
  y
  \end{pmatrix}  &=(x,y) \left(
  \begin{array}{cc}
  B^{-1}+\alpha D^TQ_1D + \Upsilon^T(\Xi)^{-T}\Psi(\Xi)^{-1}\Upsilon& -\alpha D^TQ_1\\
  -\alpha Q_1D&\mu \mathbb{I}+\alpha Q_1+ \beta E^TQ_2E
  \end{array}\right) \begin{pmatrix}
  x\\
  y
\end{pmatrix}\\
  & \geq x^T B^{-1} x + \alpha x^T D^T Q_1 D x- 2 \alpha x^T D^T Q_1 y+ \mu y^Ty + \alpha y^T Q_1 y + \beta y^T E^T Q_2 E y,
\end{align*}
since the matrix $\Upsilon^T(\Xi)^{-T}\Psi(\Xi)^{-1}\Upsilon$ is positive semi-definite thanks to Lemma ~\ref{lm:lyy_bound}. To proof the statement we have to verify that both $Q_1$ and $Q_2$ are positive semi-definite.

The matrix $Q_1$ is diagonal, thus we just need to prove that each element of its diagonal is positive. We start noticing that $\dfrac{|(q_1)_i|}{\max\{|(q_1)_i|,1\}}\leq 1$ and, therefore,
\begin{equation}\label{eq:boundq1}
\dfrac{(q_1)_i}{\max\{|(q_1)_i|,1\}}\odot \dfrac{D_iu-w_i}{|D_iu-w_i|^2}\leq \dfrac{D_iu-w_i}{|D_iu-w_i|^2}.
\end{equation}
Now using definition~\eqref{eq:Q1} we analyze the following three cases.

\begin{description}
\item[$D_iu-w_i \in\mathcal{A}$]:
\[
(Q_1)_{ii}=\dfrac{1}{|D_iu-w_i|}-\dfrac{(q_1)_i}{\max\{|(q_1)_i|,1\}}\odot\dfrac{D_iu-w_i}{|D_iu-w_i|^2}.
\]
Using the bound given in~\eqref{eq:boundq1} we have
\begin{align*}
(Q_1)_{ii}&{}=\dfrac{1}{|D_iu-w_i|}-\dfrac{(q_1)_i}{\max\{|(q_1)_i|,1\}}\odot \dfrac{D_iu-w_i}{|D_iu-w_i|^2},
&{}\geq \dfrac{1}{|D_iu-w_i|}\left(1-\dfrac{D_iu-w_i}{|D_iu-w_i|}\right) \geq 0.
\end{align*}
\item[$D_iu-w_i \in\mathcal{B}$]:
From the definition we have
\[
(Q_1)_{ii}=\gamma>0
\]
\item[$D_iu-w_i \in\mathcal{I}$]:
We start by recalling the structure of the set $\mathcal{I}:=\{i\colon |\gamma |D_iu-w_i|-1|\leq \nicefrac{1}{2\gamma}\}$. The elements of the diagonal of $Q_1$ are given by
\begin{align*}
(Q_1)_{ii}&{}=\left(1-\dfrac{\gamma}{2}\theta_\gamma^2\right)\left[\dfrac{1}{|D_iu-w_i|}-\dfrac{q_i}{\max\{1,\gamma|(q_1)_i|\}}\odot\dfrac{D_iu-w_i}{|D_iu-w_i|^2}\right]\\
&{}+\gamma^2\theta_\gamma\dfrac{D_iu-w_i}{|D_iu-w_i|}\odot\dfrac{D_iu-w_i}{|D_iu-w_i|},
\end{align*}
with $\theta_\gamma=(1-\gamma|D_iu-w_i|+\nicefrac{1}{2\gamma})$. Using the same argument as in the first case we have
\[
\left[\dfrac{1}{|D_iu-w_i|}-\dfrac{(q_1)_i}{\max\{|(q_1)_i|,1\}}\odot \dfrac{D_iu-w_i}{|D_iu-w_i|^2}\right]\geq 0.
\]
By the definition of the set $\mathcal{I}$ it follows
that $\theta_\gamma \geq 0$ and $1-\dfrac{\gamma}{2}\theta_\gamma^2 \geq 1-\dfrac{1}{2\gamma}\geq 0.$ Therefore,
\[
(Q_1)_{ii}\geq \gamma^2 \theta_\gamma \left(\dfrac{D_iu-w_i}{|D_iu-w_i|}\odot \dfrac{D_iu-w_i}{|D_iu-w_i|}\right)=\gamma^2 \theta_\gamma\left(\dfrac{D_iu-w_i}{|D_iu-w_i|}\right)^2\geq 0.
\]
Since each element of the diagonal of $Q_1$ is non-negative we can conclude that the matrix is in fact a positive semi definite matrix. Furthermore, we justify why the last term on this expression was not projected, because otherwise we can not guarantee that this term is positive.
\end{description}

In a similar manner it can be proved that $Q_2$ is also positive semi-definite.
Altogether we obtain that
\begin{align*}
  (x,y) \Pi \begin{bmatrix}
  x\\
  y
\end{bmatrix}
  & \geq \lambda_{min}(B^{-1}) \|x\|^2 + \mu \| y\|^2+ \alpha \| Q_1^{1/2} D x- Q_1^{1/2} y\|^2.
\end{align*}

\end{proof}

\subsection{Convergence analysis}
This subsection is devoted to the study of the convergence properties of the proposed algorithm. Our aim is to use the general results developed for the convergence of descent methods (see \cite[Theorem 4.2]{de2015numerical}).
%\begin{theorem}\label{th:conv2}
%Let be $f$ continuously differentiable and bounded from below. Let $\{x_k\}$, $\{\alpha_k\}$ and $\{d_k\}$ be sequences generated by a descent method with:
%\begin{align}
% \label{eq:cond_ang}-\nabla f(x^k)^Td^k&{}\geq \eta \parallel \nabla f(x^k)\parallel \parallel d^k\parallel \\
%\label{eq:decrecimiento} f(x^k+\alpha_k d^k)&{}< f(x^k)\\
% \label{eq:cond_alf} f(x^k+\alpha_kd^k)-f(x^k)\xrightarrow{k\to+\infty}&{} 0\Longrightarrow \dfrac{\nabla f(x^k)^T d^k}{\parallel d^k\parallel}\xrightarrow{k\to+\infty} 0
%\end{align}
%holding. Then,
%\[
%\lim_{k\to+\infty} \nabla f(x^k)=0
% \]
% and every accumulation point of $\{x^k\}$ is a stationary point of $f$.
%\end{theorem}
%In order to apply this Theorem, we need to verify that equations~\eqref{eq:cond_ang},~\eqref{eq:decrecimiento} and~\eqref{eq:cond_alf} hold. First we introduce the definition of the level set $N^\rho_0$.
In order to apply this Theorem, we need to verify that the angle condition and the feasibility conditions of the line search hold. First we introduce the definition of the level set $N^\rho_0$.
\begin{definition}
Let $\rho>0$ be a constant, the level set of the problem if given by
\[
N^\rho_0:= \{(u,w)+(d_u,d_w)\colon J_\gamma(\mathbf{y}(u),u,w)\leq J_\gamma(\mathbf{y}(u_0),u_0,w_0), \parallel (d_u,d_2)\parallel \leq \rho \}
\]
for a given $u_0,w_0$.
\end{definition}

Due to the continuous differentiability of $\mathbf{y}(u)$ and the structure of the objective function it is easy to prove that $J_\gamma(\mathbf{y}(u),u)$ is radially unbounded and continuous, therefore the level set $N^\rho_0$ is a compact set. Now, we prove that the angle condition holds.
% that claims the existence of a constants $M,m>0$ independent of $k$ such that
% \begin{equation}\label{eq:cond_1}
% m\parallel q\parallel^2\leq q^T\Pi_kq\leq M\parallel q\parallel^2.
% \end{equation}
% for all $k$
%The second proposition shows that the inequality~\eqref{eq:cond_1} guarantee  the directions $\{d_k\}$ satisfy the angle condition given in~\eqref{eq:cond_ang}.

\begin{proposition}\label{prop:mM}
Let $\Pi_k=\Pi(u_k)$ be defined in~\eqref{eq:Pimatrix}, then there exist constants $0~<~m~<~M$ independent of $k$ such that
\[
m\parallel q\parallel^2\leq q^T\Pi_kq\leq M\parallel q\parallel^2, \qquad \text{for all } q\in \mathbb{R}^n, \text{ for all } u_k\in N_0^\rho.
\]
\end{proposition}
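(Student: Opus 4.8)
The plan is to prove the two inequalities separately, exploiting throughout that $N_0^\rho$ is compact so that every iterate‑dependent block of $\Pi_k$ can be controlled uniformly in $k$. The lower bound will come almost for free from the positive‑definiteness already established, whereas the upper bound requires a block‑by‑block norm estimate of $\Pi$, the delicate ingredient being the uniform control of the inverse state linearization $\Xi^{-1}$.

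For the lower bound I would reuse the estimate obtained at the end of the proof of Proposition~\ref{prop: Pi is positive}: writing $q=(x,y)$ in the same block splitting, one has $q^T\Pi_k q \geq \lambda_{\min}(B^{-1})\|x\|^2 + \mu\|y\|^2 + \alpha\|Q_1^{1/2}Dx - Q_1^{1/2}y\|^2$. Discarding the last nonnegative term and using $\|q\|^2 = \|x\|^2+\|y\|^2$ gives $q^T\Pi_k q \geq m\|q\|^2$ with $m = \min\{\lambda_{\min}(B^{-1}),\mu\}$. Since neither $B$ nor $\mu$ depends on the iterate, $m>0$ is independent of $k$, which settles the left inequality.

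For the upper bound the task reduces to bounding $\|\Pi_k\|$ uniformly, which I would carry out block by block using \eqref{eq:Pimatrix}. The fixed matrices $B^{-1}, D, E, H, S, R, \Upsilon$ contribute constants. The diagonal blocks $Q_1,Q_2$ from \eqref{eq:Q1} and \eqref{eq:Q2} are bounded by a constant $C_\gamma$ depending only on the Huber parameter: in region $\mathcal{B}$ each diagonal entry equals $\gamma$, while in $\mathcal{A}$ and $\mathcal{I}$ the relevant Huber argument stays bounded away from zero ($|\,\cdot\,|\geq l_1$), so the terms $1/|\,\cdot\,|$ are bounded, the projection factor $|q_i|/\max\{1,|q_i|\}\leq 1$ keeps the mixed term under control, and $\theta_\gamma$ together with $1-\tfrac{\gamma}{2}\theta_\gamma^2$ are bounded on $\mathcal{I}$. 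The boundedness of $\mathbf{y}$ over $N_0^\rho$ follows from Lemma~\ref{lem:boundedness_y} together with the fact that $\|u\|$ is bounded on the compact level set. It then remains to control $\Psi$ and $\Xi^{-1}$: for $\Psi=H^TS^TR^{-1}SH-\mathbb{K}$ the matrix $\mathbb{K}$ of \eqref{eq:UP} satisfies $\|\mathbb{K}\|\leq 2\|\mathbb{U}\|_\infty\|\mathbf{p}\|_\infty$, and $\|\mathbf{p}\|$ is controlled through Lemma~\ref{lm:p_inf}, so $\|\Psi\|$ is uniformly bounded; the same lemma, read as a bound on $e_y(\mathbf{y},u)^{-T}$, delivers $\|\Xi^{-1}\|\leq\rho$. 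Applying the triangle inequality to the blocks of \eqref{eq:Pimatrix} then yields $\|\Pi_k\|\leq M$ with $M$ independent of $k$, whence $q^T\Pi_k q\leq M\|q\|^2$.

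The step I expect to be the main obstacle is the uniform control of $\|\Xi^{-1}\|$ (equivalently of $\|\mathbf{p}\|$), since the pointwise invertibility from Lemma~\ref{lm:e_y_invertible} does not by itself produce a bound independent of the iterate. The clean route is to verify that the hypothesis \eqref{eq: hyp bound adjoint} of Lemma~\ref{lm:p_inf} holds uniformly on the compact set $N_0^\rho$, which amounts to bounding $\|u\|$ there, and then to take $\rho$ from that lemma as the uniform constant. An alternative, more abstract argument would observe that $u\mapsto\Pi(u)$ is continuous on the compact set $N_0^\rho$ (by the continuous differentiability of $u\mapsto\mathbf{y}(u)$ from Proposition~\ref{prop:existence_burgers} and the continuity of the Huber derivatives), so that the smallest and largest eigenvalues of $\Pi$ attain a positive minimum and a finite maximum, respectively; this bypasses the explicit block estimates but requires checking continuity of the projected matrices $Q_1,Q_2$ across the region boundaries $\mathcal{A},\mathcal{B},\mathcal{I}$.
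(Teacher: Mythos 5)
Your proposal is correct and follows essentially the same route as the paper: the lower bound is read off from the positive-definiteness estimate of Proposition~\ref{prop: Pi is positive} with $m=\min\{\lambda_{\min}(B^{-1}),\mu\}$, and the upper bound is obtained block by block, bounding $Q_1,Q_2$ by $\gamma$, $\mathbf{y}$ via Lemma~\ref{lem:boundedness_y} on the compact level set, and $\Psi$ and $\Xi^{-1}$ through the a priori adjoint estimate of Lemma~\ref{lm:p_inf}. Your observation that the uniform bound on $\Xi^{-1}$ is the delicate point, resolved by checking that the hypothesis of Lemma~\ref{lm:p_inf} holds uniformly on $N_0^\rho$, matches exactly how the paper handles it.
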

\begin{proof}

We start recalling the structure of the matrix $\Pi$.
\[
\Pi=\left[\begin{array}{cc}
\Pi_{11}&\Pi_{12}\\\Pi_{21}&\Pi_{22}
\end{array}\right]=
\left[\begin{array}{cc}
B^{-1}+\alpha DQ_1D+\Upsilon^T\Xi^{-T}\Psi\Xi^{-1}\Upsilon& -\alpha D^TQ_1\\
-\alpha Q_1 D&\mu \mathbb{I}+ \alpha Q_1 +\beta E^TQ_2E
\end{array}
\right]
\]
From Proposition \ref{prop: Pi is positive} it immediately follows that
\begin{equation*}
  q^T\Pi_kq \geq m \|q\|^2, \forall q\in \mathbb{R}^n, \text{ with }m=\min\{\lambda_{\hbox{min}}(B^{-1}),\mu\}.
\end{equation*}

Concerning the upper bound, we get that
\begin{align}
  (q_1,q_2) \Pi \begin{pmatrix}
  q_1\\
  q_2
  \end{pmatrix}
  & = q_1^T B^{-1} q_1 + \alpha q_1^T D^T Q_1 D q_1- 2 \alpha q_1^T D^T Q_1 q_2+ \alpha q_2^T Q_1 q_2 +\mu q_2^Tq_2\nonumber\\
  & \hspace{2cm}+ \beta q_2^T E^T Q_2 E q_2 +q_1^T \Upsilon^T\Xi^{-T}\Psi\Xi^{-1}\Upsilon q_1 \nonumber\\
  & \leq C_B \|q_1\|^2 + \alpha \|Q_1^{1/2} Dq_1 - Q_1^{1/2} q_2\|^2 +\mu \|q_2\|^2 \nonumber\\
  & \hspace{2cm} + \beta \gamma \|E\|^2 \|q_2\|^2+ \frac{1}{(\Delta t)^2} \|\Psi\| \|\Xi^{-1}\|^2 \|q_1\|^2. \label{eq: upper bound Pi 1}
\end{align}
For the last term in the previous inequality we first get a bound on $\Psi_k$, which is given by
\[
\parallel \Psi_k \parallel \leq \parallel H^TS^TR^{-1}SH\parallel +\parallel \mathbb{K}_k\parallel.
\]
Using the definition of $\mathbb{K}_k$ given in~\eqref{eq:UP} and Lemma~ \ref{lm:p_inf},
\begin{align*}
\parallel \mathbb{K}_k\parallel &{}= 2\parallel \mathbb{U}\parallel \parallel \mathbf{p}_k\parallel \le 2\rho \parallel \mathbb U \parallel \parallel H^TS^TR^{-1}\parallel  \parallel SH\mathbf{y}_k-\mathbf{z}\parallel \\
%Furthermore, the last inequality was obtained using Lemma~ \ref{lm:p_inf}. Thereby,
% \parallel \mathbb{K}_k\parallel &{}\leq 2\rho \parallel \mathbb{U}\parallel\parallel H^TS^TR^{-1} \left[\parallel SH\parallel \parallel \mathbf{y}_k\parallel +\parallel \mathbf{z}\parallel \right]\\
&{}\leq 2\rho C_0 \parallel \mathbb{U}\parallel \parallel H^TS^TR^{-1} \parallel,
\end{align*}
where the last inequality was obtained using Lemma~\ref{lem:boundedness_y} and the fact that $u_k\in N^\rho_0$. Defining
\[
\xi:= 2 \rho C_0 \parallel \mathbb{U}\parallel \parallel H^TS^TR^{-1} \parallel
\]
we have that
\[
\parallel \Psi_k\parallel \leq \parallel H^TS^TR^{-1}SH\parallel + \xi=: C_{\Psi}.
\]
For the norm of $\Xi^{-1}$ we consider an a-priori estimate of the adjoint equation. Specifically, let $\mathbf v$ be solution of
\begin{equation}
  \Xi \mathbf v= \mathbb{E} \mathbf v+ \mathbb{Z}_k\mathbb{U} \mathbf v+ \hbox{diag }(\mathbb{U}\mathbf{y}_k) \mathbf v=\mathbf g.
\end{equation}
Proceeding as in the proof of Lemma \ref{lm:p_inf} we get that
\begin{equation*}
  \|\mathbf v\| \leq \rho \|\mathbf g\|,
\end{equation*}
with $\rho$ independent of $\mathbf y$ and $u$, and therefore $\|\Xi^{-1}\| \leq \rho$.

Inserting the last results in \eqref{eq: upper bound Pi 1} and taking into account that $\|Q_i\| \leq \gamma$, for $i=1,2$, we then obtain
\begin{align*}
q^T\Pi q&{}\leq \left(C_B+\gamma C_D + \widetilde{C}\right)\parallel q_1\parallel^2 + [\mu+ \gamma(\alpha+\beta C_E)] \parallel q_2 \parallel^2\\
&{}\leq \max\left\{\left(C_B+\gamma C_D + \widetilde{C}\right), \mu + \gamma(\alpha+\beta C_E)\right\}\parallel q\parallel^2,
\end{align*}
where $\widetilde C:= \frac{\rho^2 C_{\Psi}}{(\Delta t)^2}$. Therefore, the constant that we were looking for is
\[
M:=\max\{\left(C_B+\gamma C_D + \widetilde{C}\right), \mu + \gamma(\alpha+\beta C_E)\}
\]
\end{proof}
It is worth to remark that, from Proposition~\ref{prop:mM} it follows immediately that the descent directions $\{d^k\}$ generated by Algorithm~\ref{alg:newton} satisfy the angle condition.

Before presenting the Lemma that guarantees the uniformly continuity of the reduced gradient, we present the following intermediate results that allow us to write in a shorter way the main result of this subsection.
\begin{lemma}\label{lem:diff_y_bound}
Let $u_1,u_2\in N_0^\rho$. Furthermore, let $\mathbf{y}(u_1)$ and $\mathbf{y}(u_2)$ be solutions of the state equations $e(\mathbf{y}(u_1),u_1)=0$ and $e(\mathbf{y}(u_2),u_2)=0$, respectively.
Then the following bound holds
\[
\parallel y^j(u_1)-y^j(u_2)\parallel \leq C \parallel u_1-u_2\parallel,\qquad j=1,\ldots,N_t,
\]
where $C>0$ is a constant independent of $u_1,u_2$ and $\mathbf{y}$.
\end{lemma}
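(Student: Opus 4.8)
The plan is to adapt the energy estimate from Lemma~\ref{lem:boundedness_y}, applying it this time to the \emph{difference} of the two trajectories, and to close the recursion with the discrete Gronwall inequality. Set $d^j:=y^j(u_1)-y^j(u_2)$. For $j=1$ the first block of the state equation reads $y^1=u$, so that $d^1=u_1-u_2$ and hence $\|d^1\|=\|u_1-u_2\|$; this already produces the right-hand side of the claimed bound. For $j=2,\ldots,N_t$ I would subtract the two state equations written component-wise in time and rewrite the difference of the convective terms by adding and subtracting $\hbox{diag}(y_1^{j-1})Uy_2^j$, namely
\[
\hbox{diag}(y_1^{j-1})Uy_1^j-\hbox{diag}(y_2^{j-1})Uy_2^j=\hbox{diag}(y_1^{j-1})U\,d^j+\hbox{diag}(d^{j-1})U\,y_2^j .
\]
This yields the linearized difference equation
\[
\frac{d^j-d^{j-1}}{\Delta t}+\hbox{diag}(y_1^{j-1})U\,d^j+\hbox{diag}(d^{j-1})U\,y_2^j=0 .
\]

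Next I would test this identity with $(d^j)^T$, exactly as in Lemma~\ref{lem:boundedness_y}, to obtain
\[
\|d^j\|^2=(d^j)^Td^{j-1}-\Delta t\,(d^j)^T\hbox{diag}(y_1^{j-1})U\,d^j-\Delta t\,(d^j)^T\hbox{diag}(d^{j-1})U\,y_2^j .
\]
The three terms on the right are then estimated with Cauchy--Schwarz and the submultiplicativity of the matrix norm: the first by $\|d^j\|\,\|d^{j-1}\|$, the convective self-term by $\Delta t\,\|U\|\,\|y_1^{j-1}\|_\infty\,\|d^j\|^2$, and the cross term by $\Delta t\,\|U\|\,\|y_2^j\|\,\|d^j\|\,\|d^{j-1}\|$. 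Crucially, the factors $\|y_1^{j-1}\|$ and $\|y_2^j\|$ are \emph{uniformly} bounded over the compact level set $N_0^\rho$ thanks to the a priori estimate \eqref{eq: state bound} of Lemma~\ref{lem:boundedness_y}, so all of these constants are independent of $u_1,u_2$.

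The main obstacle is the convective self-term, which contributes a multiple of $\|d^j\|^2$ to the right-hand side and therefore cannot be treated as a lower-order perturbation. I would absorb it into the left-hand side by invoking a CFL-type smallness condition of the same nature as \eqref{eq: hyp bound adjoint}, ensuring $1-\Delta t\,\|U\|\,\|y_1^{j-1}\|_\infty\ge c>0$ uniformly on $N_0^\rho$. After dividing the resulting inequality by $\|d^j\|$ this leaves a one-step recursion of the form
\[
\|d^j\|\le\frac{1+\Delta t\,\|U\|\,\|y_2^j\|}{c}\,\|d^{j-1}\| ,
\]
whose coefficient is bounded by $\tfrac{1}{c}(1+\widehat{C}\Delta t)$, again uniformly over $N_0^\rho$.

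Finally I would iterate this recursion from $j=1$, or equivalently apply the discrete Gronwall inequality as in Lemmata~\ref{lem:boundedness_y} and~\ref{lm:p_inf}, to obtain $\|d^j\|\le C\,\|d^1\|=C\,\|u_1-u_2\|$ for every $j=1,\ldots,N_t$, with $C$ depending only on $c$, $\Delta t$, $N_t$, $\|U\|$ and the uniform state bound on $N_0^\rho$, hence independent of $u_1$, $u_2$ and $\mathbf{y}$. This is precisely the asserted estimate.
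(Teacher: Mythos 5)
Your argument is correct in its essentials, but it is a genuinely different route from the paper's. The paper disposes of this lemma in two lines of ``soft'' analysis: Proposition~\ref{prop:existence_burgers} (implicit function theorem) gives that $u\mapsto y^j(u)$ is continuously differentiable, the mean value theorem then yields $\|y^j(u_1)-y^j(u_2)\|\leq\|(y^j)'(\xi)\|\,\|u_1-u_2\|$, and compactness of the level set $N_0^\rho$ turns $\sup_\xi\|(y^j)'(\xi)\|$ into a uniform constant $C$. You instead run a hands-on discrete energy estimate on the difference $d^j=y^j(u_1)-y^j(u_2)$: subtract the two schemes, split the convective difference into a self-term and a cross-term, test with $(d^j)^T$, absorb the self-term via a CFL-type condition, and close with discrete Gronwall — exactly the template of Lemmata~\ref{lem:boundedness_y}, \ref{lm:p_inf} and~\ref{lem:diff_p_bound}. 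Your computations check out (the decomposition of the convective terms, the Cauchy--Schwarz bounds, and the one-step recursion are all correct, and the resulting constant indeed depends only on $c$, $\Delta t$, $N_t$, $\|U\|$ and the uniform state bound on $N_0^\rho$). The trade-off is that your proof buys an explicit, computable Lipschitz constant and makes no appeal to the (only locally justified) global differentiability of $\mathbf{y}(u)$, at the price of an additional smallness hypothesis $1-\Delta t\,\|U\|\,\|y_1^{j-1}\|\geq c>0$ that is not in the lemma's statement; since this is the same stability condition \eqref{eq: hyp bound adjoint} the paper already imposes for the adjoint bound and reuses verbatim in Lemma~\ref{lem:diff_p_bound}, it is consistent with the paper's standing assumptions, but you should state it explicitly as a hypothesis rather than invoke it mid-proof.
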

\begin{proof}
From Proposition~\ref{prop:existence_burgers} we know that $\mathbf{y}(u)=[y^1(u),y^2(u),\ldots, y^{N_t}(u)]$ is a continuously differentiable function on $\mathbb{R}^m$, that means that $y^j(u)$ is continuously differentiable for all $j=1,\ldots,N_t$. Using the mean value theorem we can conclude that
\[
\parallel y^j(u_1)-y^j(u_2)\parallel \leq \parallel (y^j)'(\xi)\parallel \parallel u_1-u_2\parallel
\]
for some $\xi$ that belongs to a neighborhood which contains $u_1,u_2$. Since $(y^j)'$ is continuous and $u_1,u_2$ belongs to the level set $N_0^\rho$ and this is compact, there exists $C>0$ that satisfies the required bound.\qedhere
\end{proof}
\begin{lemma}\label{lem:diff_p_bound}
Let $u_1,u_2\in N_0^\rho$. Furthermore, let $\mathbf{y}(u_1)$ and $\mathbf{y}(u_2)$ be solutions of the state equations $e(\mathbf{y}(u_1),u_1)=0$ and $e(\mathbf{y}(u_2),u_2)=0$, respectively. In addition let $\mathbf{p}(u_1),\mathbf{p}(u_2)$ be its associate adjoint states.
Then the following bound holds
\[
\parallel p^j(u_1)-p^j(u_2)\parallel \leq \tilde{C} \parallel u_1-u_2\parallel,\qquad j=1,\ldots, N_t
\]
where $\tilde{C}>0$ is a constant independent of $u_1,u_2$ and $\mathbf{y}$ or $\mathbf{p}$.
\end{lemma}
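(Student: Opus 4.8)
The plan is to exploit the linear (in $\mathbf{p}$) structure of the adjoint equation \eqref{eq:adjoint_state} together with the already-established Lipschitz dependence of the state on the control from Lemma~\ref{lem:diff_y_bound}. Writing $A_i := e_y(\mathbf{y}(u_i),u_i)^T$ and $b_i := H^TS^TR^{-1}(SH\mathbf{y}(u_i)-\mathbf{z})$ for $i=1,2$, the adjoint states solve $A_1\mathbf{p}(u_1)=b_1$ and $A_2\mathbf{p}(u_2)=b_2$. Subtracting and inserting $A_1\mathbf{p}(u_2)$, I would rearrange to the identity
\[
\mathbf{p}(u_1)-\mathbf{p}(u_2)=A_1^{-1}\bigl[(b_1-b_2)-(A_1-A_2)\mathbf{p}(u_2)\bigr],
\]
which reduces the claim to bounding the factors $\|A_1^{-1}\|$, $\|b_1-b_2\|$ and $\|(A_1-A_2)\mathbf{p}(u_2)\|$ uniformly over the level set $N_0^\rho$.

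The two Lipschitz-in-$u$ factors follow directly from Lemma~\ref{lem:diff_y_bound}. Since $b_1-b_2=H^TS^TR^{-1}SH(\mathbf{y}(u_1)-\mathbf{y}(u_2))$, summing the component-wise bounds of Lemma~\ref{lem:diff_y_bound} gives $\|b_1-b_2\|\le \|H^TS^TR^{-1}SH\|\,C'\|u_1-u_2\|$. For the operator difference I would use that $e_y=\mathbb{E}+\mathbb{Z}(\mathbf{y})\mathbb{U}+\hbox{diag}(\mathbb{U}\mathbf{y})$ depends on $\mathbf{y}$ only through $\mathbb{Z}(\mathbf{y})$ and $\hbox{diag}(\mathbb{U}\mathbf{y})$, both \emph{linear} in $\mathbf{y}$, so that
\[
A_1-A_2=\mathbb{U}^T\bigl[\mathbb{Z}(\mathbf{y}(u_1)-\mathbf{y}(u_2))\bigr]^T+\hbox{diag}\bigl(\mathbb{U}(\mathbf{y}(u_1)-\mathbf{y}(u_2))\bigr),
\]
whence $\|A_1-A_2\|\le c_1\|\mathbf{y}(u_1)-\mathbf{y}(u_2)\|\le c_1 C'\|u_1-u_2\|$ by Lemma~\ref{lem:diff_y_bound} again, with $c_1$ depending only on $\|\mathbb{U}\|$.

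The remaining factors are bounded uniformly using the compactness of $N_0^\rho$. For $\|A_1^{-1}\|=\|e_y(\mathbf{y}(u_1),u_1)^{-1}\|$ I would invoke the a-priori estimate already obtained in the proof of Proposition~\ref{prop:mM}, namely $\|\Xi^{-1}\|\le\rho$ with $\rho$ independent of $\mathbf{y}$ and $u$ (together with $\|A^{-T}\|=\|A^{-1}\|$ for the spectral norm). For $\|\mathbf{p}(u_2)\|$, Lemma~\ref{lm:p_inf} yields $\|\mathbf{p}(u_2)\|\le\rho\|H^TS^TR^{-1}(SH\mathbf{y}(u_2)-\mathbf{z})\|$, and since $u_2\in N_0^\rho$ the state $\mathbf{y}(u_2)$ is bounded by Lemma~\ref{lem:boundedness_y}, giving a uniform bound $\|\mathbf{p}(u_2)\|\le C_p$. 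Combining the four estimates in the displayed identity produces $\|\mathbf{p}(u_1)-\mathbf{p}(u_2)\|\le\tilde C\|u_1-u_2\|$ for the full vector, from which the component-wise bound on $\|p^j(u_1)-p^j(u_2)\|$ follows \emph{a fortiori}.

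The main obstacle is bookkeeping rather than anything conceptual: every constant must be shown independent of the particular controls $u_1,u_2$ and of the associated states and adjoints, which is guaranteed by the compactness of $N_0^\rho$ and the uniform estimates of Lemmas~\ref{lem:boundedness_y} and~\ref{lm:p_inf}. The one point requiring care is that Lemma~\ref{lm:p_inf} carries the structural hypothesis \eqref{eq: hyp bound adjoint}; I would either assume it holds throughout $N_0^\rho$ or, preferably, rely on the unconditional bound $\|\Xi^{-1}\|\le\rho$ from Proposition~\ref{prop:mM}, which does not require the sign condition and already suffices both for $\|A_1^{-1}\|$ and, via $\mathbf{p}(u_2)=A_2^{-1}b_2$, for the uniform bound on $\|\mathbf{p}(u_2)\|$.
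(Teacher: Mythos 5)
Your argument is correct, but it takes a genuinely different route from the paper. The paper proves this lemma the ``hard'' way: it subtracts the two adjoint systems, analyzes the difference component-wise in time, establishes a coercivity estimate of the form $(A\,\delta q^{j},\delta q^{j})\geq \tfrac{c}{\Delta t}\|\delta q^{j}\|^2$ at each time level (using Lemma~\ref{lem:boundedness_y} and the boundedness of $u_1$ on $N_0^\rho$), and then propagates the resulting recursion backward in time with the discrete Gronwall inequality, producing an explicit constant $\tilde C$. You instead work with the full space--time system at once, via the second resolvent identity $\mathbf{p}(u_1)-\mathbf{p}(u_2)=A_1^{-1}[(b_1-b_2)-(A_1-A_2)\mathbf{p}(u_2)]$, and reduce everything to four uniform bounds, each of which is already available: $\|b_1-b_2\|$ and $\|A_1-A_2\|$ are Lipschitz in $u$ by Lemma~\ref{lem:diff_y_bound} and the linearity of $\mathbb{Z}(\cdot)$ and $\hbox{diag}(\mathbb{U}\,\cdot)$, while $\|A_1^{-1}\|$ and $\|\mathbf{p}(u_2)\|$ are controlled by the a-priori adjoint estimates. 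This is cleaner and more modular, at the price of a less explicit constant. One caveat: your claim that the bound $\|\Xi^{-1}\|\leq\rho$ from Proposition~\ref{prop:mM} is ``unconditional'' is not quite accurate --- that estimate is itself obtained by ``proceeding as in the proof of Lemma~\ref{lm:p_inf}'' and therefore inherits the structural hypothesis \eqref{eq: hyp bound adjoint}; but the paper's own proof of the present lemma uses exactly the same condition (it is what makes $c=\nicefrac{1}{\Delta t}-\|U\|(K+\Delta t\|\mathbf f\|)$ positive), so you are on equal footing, and you at least flag the issue explicitly where the paper leaves it implicit.
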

\begin{proof}
From the analysis component--wise in time of two adjoint equations~\eqref{eq:adjoint_state} in the variables $u_1,u_2\in N^\rho_0$ and denoting by
\[
A=\left(\dfrac{1}{\Delta t}\mathbb{I}+\hbox{diag }(y^{j-1}(u_1))U^T+\hbox{diag }(Uy^j(u_1)) \right),
\]
we get the following inequality:
\begin{align*}
&{}\left[A(q^{N_t-j+1}(u_1)-q^{N_t-j+1}(u_2)),(q^{N_t-j+1}(u_1)-q^{N_t-j+1}(u_2))\right]\\
&{}\geq \dfrac{1}{\Delta t}\parallel q^{N_t-j+1}(u_1)-q^{N_t-j+1}(u_2)\parallel^2 \\
&{}\quad+ \left(\hbox{diag }(y^j(u_1))U^T (q^{N_t-j+1}(u_1)-q^{N_t-j+1}(u_2)),(q^{N_t-j+1}(u_1)-q^{N_t-j+1}(u_2))\right)\\
&{}\geq \dfrac{1}{\Delta t}\parallel q^{N_t-j+1}(u_1)-q^{N_t-j+1}(u_2)\parallel^2 - \parallel U\parallel \parallel \mathbf{y}(u_1)\parallel \parallel q^{N_t-j+1}(u_1)-q^{N_t-j+1}(u_2)\parallel^2\\
&{}\geq \left(\dfrac{1}{\Delta t}- \parallel U\parallel \left(\|u_1\| + \Delta t \|\mathbf{f}\|\right)\right) \parallel q^{N_t-j+1}(u_1)-q^{N_t-j+1}(u_2)\parallel^2\\
&{}\geq \dfrac{c}{\Delta t} \parallel q^{N_t-j+1}(u_1)-q^{N_t-j+1}(u_2)\parallel^2
\end{align*}
%{\color{red} no entiendo el ultimo paso. aclara porfa o sino modificale pasando al otro lado}

where the last inequalities were obtained using Lemma~\ref{lem:boundedness_y} and the fact that $u_1\in N_0^\rho$, that is, there exists $K>0$ such that $\|u_1\|\leq K$. Denoting $c=\nicefrac{1}{\Delta t}-\|U\|(K+\Delta t\|\mathbf{f}\|)$ we get the result. Therefore,
\begin{align*}
\dfrac{c}{\Delta t}\parallel q^{N_t-j+1}(u_1)&{}-q^{N_t-j+1}(u_2)\parallel \leq 2\parallel \mathbf{y}(u_1)-\mathbf{y}(u_2)\parallel \parallel U\parallel \parallel q^{N_t-j+1}(u_1)\parallel \\
&{}+ \dfrac{1}{\Delta t}\parallel q^{N_t-j}(u_1)-q^{N_t-j}(u_2)\parallel + \parallel H^TS^TR^{-1}SH\parallel \parallel \mathbf{y}(u_1)-\mathbf{y}(u_2)\parallel\\
&{}\mu\left[2\parallel U\parallel\rho \eta +\parallel H^TS^TR^{-1}SH\parallel \right]\parallel u_1-u_2\parallel + \dfrac{1}{\Delta t}\parallel q^{N_t-j}(u_1)-q^{N_t-j}(u_2)\parallel
\end{align*}
Using Lemma~\ref{lem:diff_y_bound} and the fact that $u_1,u_2\in N_0^\rho$ leads to:
\[
\parallel H^TS^TR^{-1}(SH\mathbf{y}-\mathbf{z})\parallel \leq c\parallel H^TS^TR^{-1}SH\parallel + \parallel H^TS^TR^{-1}\mathbf{z}\parallel=: \eta
\]
with $\eta>0$ a constant independent of $u_1,u_2$ and $\mathbf{y}$ or $\mathbf{p}$. Finally applying discrete Gronwall's inequality we get the result with
\[
\tilde{C}=C\exp(c-1)\left[\left(1+\dfrac{2\Delta t}{c}\right)\rho\eta\parallel U\parallel + 2 \parallel H^TS^TR^{-1}SH\parallel \right]. \qedhere
\]
\end{proof}

Next, we prove the uniform continuity of $\nabla f$, where $f$  is the reduced functional. %Due to the non-linearity of the state equation, we could not find an explicit expression for the reduced functional. For that reason we use the adjoint equation and get
\begin{align}\label{eq:grad}
\nabla f(u,w)&{}= \left[\begin{array}{c}
-e_u(\mathbf{y},u,w)^T p + \nabla_uJ(\mathbf{y},u,w)\\
-e_w(\mathbf{y},u,w)^T p + \nabla_wJ(\mathbf{y},u,w)
\end{array}\right]\\
&{}=\left[\begin{array}{c}
\dfrac{1}{\Delta t}p^1 + B^{-1}(u-u^b) + \alpha D^T h_\gamma(Du-w)\\
\mu w-\alpha h_\gamma(Du-w)+\beta E^Th_\gamma(Ew)
\end{array}
\right].
 \nonumber
\end{align}

\begin{theorem}\label{lem:Nabla_cont}
%  Let $u \in N_0^\rho$, where $N_0^\rho$ is the level set of the problem defined by
%  \[
%  N_0^\rho:=\{u+d\colon J_\gamma(\mathbf{y},u),w\leq J_\gamma(\mathbf{y}_0,u_0,w_0),\parallel d\parallel \leq \rho\},
%  \]
%  with $e(\mathbf{y}_0,u_0)=0$ and $\rho>0$.
 Let $\nabla f(u,w)$ be defined as in~\eqref{eq:grad}. Then , $\nabla f$ is uniformly continuous on the level set $N_0^\rho$.  %in the level set $N_0^\rho$, for $\Delta t$ small enough.
\end{theorem}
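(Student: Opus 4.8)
The plan is to establish that $\nabla f$ is in fact Lipschitz continuous on $N_0^\rho$, from which uniform continuity follows immediately; alternatively, since $N_0^\rho$ is compact, it would suffice to verify continuity and invoke the Heine--Cantor theorem. I would decompose $\nabla f(u,w)$ as in~\eqref{eq:grad} into its individual summands and treat each separately, using that a finite sum of Lipschitz maps is again Lipschitz, with Lipschitz constant bounded by the sum of the individual ones.

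The affine terms $B^{-1}(u-u^b)$ and $\mu w$ are trivially Lipschitz, with constants $\parallel B^{-1}\parallel$ and $\mu$. For the Huber contributions $\alpha D^T h_\gamma(Du-w)$, $\alpha h_\gamma(Du-w)$ and $\beta E^T h_\gamma(Ew)$, the key observation is that $h_\gamma$ is globally Lipschitz: inspecting the diagonal second derivative~\eqref{eq:2der_huber_c2} componentwise, one sees that each entry lies in $[0,\gamma]$ over the three regions $\mathcal A$, $\mathcal B$, $\mathcal I$ (in $\mathcal A$ the scalar expression vanishes, in $\mathcal B$ it equals $\gamma$, and in $\mathcal I$ it reduces to $\gamma^2\theta_\gamma\in[0,\gamma]$ by the defining inequality of $\mathcal I$), so that $\parallel h_\gamma(a)-h_\gamma(b)\parallel \le \gamma \parallel a-b\parallel$. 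Composing with the bounded linear maps $D$ and $E$ and pre-multiplying by $D^T$ or $E^T$ preserves the Lipschitz property, so each of these terms is Lipschitz in $(u,w)$ with constant controlled by $\gamma$, $\alpha$, $\beta$ and the norms of $D$ and $E$.

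The remaining, and genuinely nontrivial, term is the adjoint contribution $\tfrac{1}{\Delta t}p^1$, whose dependence on $u$ is only implicit: $u$ determines the state $\mathbf y(u)$ through Proposition~\ref{prop:existence_burgers}, which in turn enters the adjoint equation~\eqref{eq:adjoint_state} defining $\mathbf p(u)$. This is precisely the estimate I expect to be the main obstacle, but the work has already been carried out in Lemma~\ref{lem:diff_p_bound}, which yields $\parallel p^1(u_1)-p^1(u_2)\parallel \le \tilde C \parallel u_1-u_2\parallel$ for $u_1,u_2\in N_0^\rho$, with $\tilde C$ independent of the chosen points. Hence $\tfrac{1}{\Delta t}p^1$ is Lipschitz in $u$ as well.

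Collecting the estimates, $\nabla f$ is a finite sum of Lipschitz maps on $N_0^\rho$ and is therefore Lipschitz, with an explicit constant built from $\parallel B^{-1}\parallel$, $\mu$, $\gamma$, $\alpha$, $\beta$, $\parallel D\parallel$, $\parallel E\parallel$, $\tfrac{1}{\Delta t}$ and $\tilde C$; a Lipschitz map is uniformly continuous, which gives the claim. The only subtlety worth checking is that the constant from Lemma~\ref{lem:diff_p_bound} is genuinely uniform over $N_0^\rho$, which is guaranteed by the compactness of the level set established above together with the fact that the constants in Lemmas~\ref{lem:diff_y_bound} and~\ref{lem:diff_p_bound} are independent of $u_1,u_2$.
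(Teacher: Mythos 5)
Your proof is correct, but it takes a more quantitative route than the paper. The paper's own proof is a single line: it invokes the Heine theorem, arguing that a continuous map on the compact level set $N_0^\rho$ is automatically uniformly continuous (the text says ``$f(u,w)$ is continuous'' where it clearly means $\nabla f$, whose continuity follows from the continuous differentiability of $\mathbf{y}(u)$ and hence of $\mathbf{p}(u)$). You instead establish the stronger property of Lipschitz continuity term by term: the affine terms trivially, the Huber terms via the bound $0\le (h'_\gamma)_{ii}\le\gamma$ read off from~\eqref{eq:2der_huber_c2} (your case check on $\mathcal A$, $\mathcal B$, $\mathcal I$ is accurate), and the implicit adjoint term $\tfrac{1}{\Delta t}p^1$ via Lemma~\ref{lem:diff_p_bound}. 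What your approach buys is an explicit modulus of continuity and, more importantly, it actually uses the preparatory Lemmas~\ref{lem:diff_y_bound} and~\ref{lem:diff_p_bound}, which the paper states immediately before this theorem but then never invokes in its one-line proof --- so your argument is arguably the one those lemmas were written for. What the paper's route buys is brevity: compactness of $N_0^\rho$ plus continuity of $\mathbf{y}(u)$, $\mathbf{p}(u)$ and $h_\gamma$ already suffices, without any Lipschitz bookkeeping. Both arguments are sound; yours is the more self-contained of the two.
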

\begin{proof}
  The proof follows directly from the Heine Theorem since $f(u,w)$ is continuous over the level set $N_0^\rho$, which is compact. \qedhere
\end{proof}

\subsection{Polynomial line-search}
Concerning to the line--search procedure, we consider the polynomial backtracking algorithm proposed in~\cite{dennis1996numerical}. The main feature of this algorithm is that, unlike the usual backtracking procedure, we choose the step length $s_k$ by using a quadratic or cubic approximation of the objective function. Nevertheless, the step--length also satisfies the Armijo condition given by
\begin{equation} \label{eq: armijo}
  J_\gamma(\mathbf{y}_s,u_s,w_s)\leq J_\gamma(\mathbf{y},u,w)-c_1 s \nabla J_\gamma(\mathbf{y},u,w)^Td,
\end{equation}
where $u_s=u+sd$,  $\mathbf{y}_s$ is its associated state , $w_s=w+sd$ and $0<c_1<<1$ is a constant.
Using Lemma 2.2 from~\cite{hinze2008optimization} we can guarantee that there exist steps $s_k$ that satisfy the Armijo condition, no matter which backtraking rule we are using.

In particular, for the polynomial backtraking the following bounds for the step lengths hold. If in the first iteration the Armijo condition is not fulfilled the previous step--length is bounded by
\begin{equation}\label{eq:s_1_pol}
\tilde{s}<\dfrac{1}{2(1-c_1)}
\end{equation}
and if the Armijo condition is not fulfilled in the next iterations the following bound holds
\begin{equation}\label{eq:s_2_pol}
\tilde{s}<\dfrac{2}{3}s_{prev}
\end{equation}
where $s_{prev}$ is the previous  step--length.

The proof of the following proposition was developed following the techniques presented in~\cite{ulbrich2012nichtlineare}
\begin{proposition}
Let $\nabla f$ be uniformly continuous on the level set $N_0^\rho$.
If the iterations generated by Algorithm~\ref{alg:newton}, with $\{u_k,w_k\}_K$ convergent and $\{s_k\}$ satisfying the Armijo condition given by the polynomial backtracking line--search procedure, are such that
\begin{equation}\label{eq:condition_feas}
\parallel d^k\parallel \geq \varphi\left(\dfrac{-(\nabla f(u^k,w^k),d^k)}{\parallel d^k\parallel}\right), \quad\forall k\in K
\end{equation}
for $\varphi(\cdot)$ a non--decreasing and continuous function. Then $\{s_k\}_K$ satisfies the feasibility conditions of the line search.
\end{proposition}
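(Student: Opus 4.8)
The plan is to show that the accepted step lengths are \emph{efficient}, in the sense that each produces a reduction of the objective that can be bounded below by a forcing function of the normalized directional derivative $-(\nabla f(u_k,w_k),d^k)/\|d^k\|$; this is exactly what the feasibility hypothesis of \cite[Theorem 4.2]{de2015numerical} requires, and the existence of admissible steps is already ensured by Lemma 2.2 of \cite{hinze2008optimization}. First I would record that the accepted step $s_k$ satisfies the Armijo sufficient-decrease condition \eqref{eq: armijo}, so that, since $d^k$ is a descent direction by Proposition \ref{prop: Pi is positive},
\[
J_\gamma(\mathbf{y}_k,u_k,w_k)-J_\gamma(\mathbf{y}_{k+1},u_{k+1},w_{k+1})\geq c_1\,s_k\,\bigl(-(\nabla f(u_k,w_k),d^k)\bigr)\geq 0.
\]
It then remains only to bound $s_k$ from below in terms of the directional derivative.

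The core estimate is a lower bound on the last trial step $\tilde s$ that was rejected immediately before $s_k$ was accepted. By definition $\tilde s$ violates \eqref{eq: armijo}; applying the mean value theorem to $s\mapsto J_\gamma$ along the direction $d^k$ and rearranging, one obtains for some $\theta\in(0,1)$ that
\[
\bigl(\nabla f\bigl((u_k,w_k)+\theta\tilde s\,d^k\bigr)-\nabla f(u_k,w_k),\,d^k\bigr)>(1-c_1)\bigl(-(\nabla f(u_k,w_k),d^k)\bigr).
\]
The intermediate point lies in $N_0^\rho$ provided $\tilde s\,\|d^k\|\leq\rho$, so I may invoke the uniform continuity of $\nabla f$ on the compact level set (Theorem \ref{lem:Nabla_cont}). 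Bounding the left-hand side by Cauchy--Schwarz and the modulus of continuity $\omega$ of $\nabla f$ gives
\[
\omega\bigl(\tilde s\,\|d^k\|\bigr)>(1-c_1)\,\frac{-(\nabla f(u_k,w_k),d^k)}{\|d^k\|}.
\]
Since $\omega$ is nondecreasing with $\omega(0)=0$, this forces $\tilde s\,\|d^k\|$, and hence $\tilde s$, to be bounded below by a strictly positive, monotone quantity in the normalized directional derivative.

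Next I would transfer this bound from the rejected trial $\tilde s$ to the accepted step $s_k$. This is where the safeguarding bounds \eqref{eq:s_1_pol} and \eqref{eq:s_2_pol} of the polynomial backtracking enter: they guarantee that successive trial steps cannot collapse by more than a fixed factor, so that $s_k\geq\sigma\,\tilde s$ for a constant $\sigma\in(0,1)$ independent of $k$. Combining this with the decrease inequality of the first paragraph and with the lower bound on $\tilde s$, and finally using the hypothesis \eqref{eq:condition_feas} to control $\|d^k\|$ through the non-decreasing function $\varphi$, the reduction $J_\gamma(\mathbf{y}_k,u_k,w_k)-J_\gamma(\mathbf{y}_{k+1},u_{k+1},w_{k+1})$ is seen to dominate a genuine forcing function of $-(\nabla f(u_k,w_k),d^k)/\|d^k\|$. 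As $k$ ranges over $K$ this is precisely the feasibility condition of the line search.

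The step I expect to be the main obstacle is this last transfer. Because polynomial (quadratic/cubic) backtracking does not reduce the trial step by a constant geometric ratio, deducing $s_k\geq\sigma\,\tilde s$ requires a careful exploitation of \eqref{eq:s_1_pol}--\eqref{eq:s_2_pol} to rule out an arbitrarily severe contraction on the final, accepted step. A secondary technical difficulty is that $\nabla f$ is only assumed uniformly continuous rather than Lipschitz, so the whole argument must be phrased through the modulus of continuity $\omega$, and one has to check that $\omega$ still yields a legitimate positive and monotone forcing function rather than a merely qualitative bound.
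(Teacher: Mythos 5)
Your proposal is correct in substance, but it takes a genuinely different route from the paper's proof: you give a direct, quantitative ``efficient step'' argument that bounds the per-iteration decrease below by an explicit forcing function of $-(\nabla f(u^k,w^k),d^k)/\|d^k\|$, built from the modulus of continuity $\omega$ of $\nabla f$, whereas the paper argues by contradiction, assuming the normalized directional derivatives stay above some fixed $\varepsilon>0$ along a subsequence and deriving a uniform positive decrease $c_1\delta\theta$ that contradicts the convergence of $\{f(u^k,w^k)\}$. The contradiction format buys the paper a simplification you cannot use: with $\varepsilon$ fixed, uniform continuity only needs to supply a single threshold $\eta$ at the level $(1-c_1)\varepsilon$, so $\omega$ never has to be inverted; your direct version must construct a generalized inverse of $\omega$ and verify it is a legitimate (positive, nondecreasing, vanishing only at zero) forcing function, but in exchange it yields an explicit decrease estimate at every iteration rather than only the limiting implication. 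Beyond that, the two proofs use the same three ingredients --- the mean value theorem applied to the last rejected trial step, uniform continuity of $\nabla f$ on the compact fattened level set $N_0^\rho$ (Theorem~\ref{lem:Nabla_cont}), and the backtracking safeguards to pass from the rejected trial to the accepted step --- and both lean on the same unstated lower safeguard $s_{\mathrm{new}}\geq \sigma\, s_{\mathrm{prev}}$ of the Dennis--Schnabel scheme: the displayed bounds \eqref{eq:s_1_pol}--\eqref{eq:s_2_pol} are only \emph{upper} bounds on the new trial, and the paper invokes the lower one implicitly when it divides $s_k$ by $\nicefrac{2\sigma}{3}$, so the obstacle you flag is real but is shared with the paper rather than specific to your argument. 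Two small points to make explicit when you write this up: treat separately the iterations where $s_k=1$ is accepted immediately (no rejected trial $\tilde s$ exists), in which case the decrease is controlled directly through hypothesis \eqref{eq:condition_feas} as in the paper's first case; and handle the possibility $\tilde s\,\|d^k\|>\rho$, where the intermediate point may leave $N_0^\rho$ but where that inequality itself already supplies the required lower bound.
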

\begin{proof}
We start we some notation. The $k$--th descent direction $d^k$ can be decomposed in the following way $d^k=(d^k_u,d^k_w)^T$. Furthermore, we denote as $f(u,w)$ the reduced objective function and  $\nabla f(u,w)$ its  gradient. % given in~\eqref{eq:nablaf1}.

Since Algorithm~\ref{alg:newton} generates descent directions and \eqref{eq: armijo} is fulfilled, $\{f(u^k,w^k)\}_{k\in K}$ is monotonically decreasing. Therefore, it is enough to prove that
\[
f(u^k+s_kd^k,w^k+s_kd^k)-f(u^k,w^k)\xrightarrow{k\to+\infty} 0\Longrightarrow \dfrac{\nabla f(u^k,w^k)^T d^k}{\parallel d^k\parallel}\xrightarrow{k\to+\infty} 0,\quad\forall k\in K.
\]
%or equivalently,
%\[
%\left(\dfrac{\nabla f(u^k,w^k)^T d^k}{\parallel d^k\parallel}\right)_K \nrightarrow 0 \Longrightarrow f(u^k+s_kd_u^k,w^k+s_kd_w^k)-f(u^kw,^k)\nrightarrow 0
%\]

Next we will argue by contradiction. Therefore, assuming that
\[
\left(\dfrac{\nabla f(u^k,w^k)^T d^k}{\parallel d^k\parallel}\right)\nrightarrow 0,\qquad \forall k\in K
\]
there exists a subsequence $\{(u^k,w^k)\}_{k\in K'}$, from $\{(u^k,w^k)\}_{k\in K}$ %{\color{red}aqui no queda claro que es K'}
with $\varepsilon>0$ such that
\[
\dfrac{-\nabla f(u^k,w^k)^Td^k}{\parallel d^k\parallel}\geq \varepsilon
\]
for all $k\in K'\subset K$. Now, since~\eqref{eq:condition_feas} holds, it follows that for all $k\in K'$
\[
\parallel d^k\parallel \geq \varphi\left(\dfrac{-\nabla f(u^k,w^k)^Td^k}{\parallel d^k \parallel}\right)\geq \varphi (\varepsilon)=:\delta > \varphi(0)\geq 0
\]
Using the mean value Theorem, we know that for all $k\in K'$, there exists $\tau_k\in [0,s_k]$ such that
\begin{align}
\nonumber&{}\dfrac{f(u^k+s_kd_u^k,w^k+sd_w^k)-f(u^k,w^k)}{\parallel s_kd^k\parallel }-\dfrac{s_kc_1\nabla f(u^k,w^k)^Td^k}{\parallel s_k d^k\parallel}\\
&{}=\dfrac{\nabla f(u^k+\tau_k d_u^k,w^k+\tau_k d_w^k  )^Td^k}{\parallel d^k \parallel }-\dfrac{c_1 \nabla f(u^k,w^k)^Td^k}{\parallel d^k\parallel }\\
\nonumber&{}=\left(\dfrac{\nabla f(u^k+\tau_kd_u^k,w^k + \tau_kd^k_w)^Td^k-\nabla f(u^k,w^k)^Td^k}{\parallel d^k\parallel}\right)+(1-c_1)\dfrac{\nabla f(u^k,w^k)^Td^k}{\parallel d^k\parallel}\\
\nonumber&{}\leq \parallel \nabla f(u^k+\tau_kd_u^k,w^k+\tau_kd_w^k)-\nabla f(u^k,w^k)\parallel + (1-c_1)\dfrac{\nabla f(u^k,w^k)^Td^k}{\parallel d^k\parallel}\\
\label{eq:condaa}&{}\leq \parallel \nabla f(u^k+\tau_kd_u^k,w^k+\tau_kd_w^k)-\nabla f(u^k,w^k)\parallel - (1-c_1)\varepsilon
\end{align}
On the other hand, since $\nabla f$ is uniformly continuous on the compact level set $N_0^\rho$, there exists $\eta$ independent from $u^k$ and $w^k$ such that
\[
\parallel \nabla f(u^k +d_u,w^k+d_w)-\nabla f(u^k,w^k)\parallel < (1-c_1)\varepsilon\qquad\forall k\in K', d\colon \parallel d \parallel \leq \eta
\]
Jointly with~\eqref{eq:condaa} and the result above we had proved that the Armijo condition holds when $s_k\parallel d^k \parallel \leq \eta$.
Using the bounds obtained for the polynomial backtracking~\eqref{eq:s_1_pol} and~\eqref{eq:s_2_pol} we know that $s_k=1$ and the Armijo condition is fulfilled in the first iteration or we have two possible scenarios:
\begin{description}
\item[Armijo condition is fulfilled in the second iteration]
\[
s_k\leq \dfrac{1}{2(1-c_1)}=:\sigma <1\hbox{ and } s_k\parallel d^k\parallel > \eta
\]
\item[Armijo condition is fulfilled in a later iteration]
\[
s_k\leq \dfrac{2}{3}s_{prev}< \left(\dfrac{2\sigma}{3}\right)^n\hbox{ and } \dfrac{s_k}{\left(\nicefrac{2\sigma}{3}\right)}\parallel d^k\parallel > \eta
\]
\end{description}
Therefore, for all $k\in K'$ it follows that
\[
s_k\parallel d^k\parallel \geq \min\left\{\eta,\dfrac{2\sigma\eta}{3},\delta\right\}=: \theta>0
\]
Using again the Armijo condition we have
\begin{align*}
f(u^k,w^k)- f(u^k + s_kd_u^k,w^k+s_kd_w^k)&{}\geq -s_kc_1 \nabla f(u^k,w^k)^T d^k\\
&{}= c_1 \left(\dfrac{-\nabla f(u^k,w^k)^T d^k}{\parallel d^k\parallel }\right)(s_k\parallel d^k\parallel)\geq c_1 \delta\theta>0
\end{align*}
Consequently,
\[
f(u^k,w^k)- f(u^k + s_kd_u^k,w^k + s_kd_w^k)\nrightarrow 0.
\]
and we get the result.\qedhere
\end{proof}

Condition \eqref{eq:condition_feas} is fulfilled automatically for our algorithm since the matrices $\{\Pi_k\}$ satisfy the condition given in Proposition~\ref{prop:mM}, with $\varphi(x)=\dfrac{x}{M}$ as the non-decreasing and continuous function. In that case, using the notation given in the reduced system~\eqref{eq:Pi_system} we have that:
\begin{align*}
\dfrac{-\nabla f(u^k,w^k)^Td^k}{\parallel d^k\parallel}&{}=\dfrac{\nabla f(u^k,w^k)^T\Pi_k^{-1}\Pi_k\Pi_k^{-1}\nabla f(u_k,w^k)}{\parallel d^k\parallel},\\
&{}=\dfrac{(d^k)^T\Pi_kd^{k}}{\parallel d^k\parallel }\leq \dfrac{M\parallel d^k\parallel^2}{\parallel d^k\parallel }=M\|d^k\|
\end{align*}
Then,
\[
\varphi\left(\dfrac{-(\nabla f(u^k,w^k),d^k)}{\parallel d^k\parallel}\right) \leq \varphi(M\|d^k\|) = \| d^k\|.
\]
and $\nabla f$ is uniformly continuous thanks to Lemma~\ref{lem:Nabla_cont}. Thus, we can guarantee that the feasibility conditions in our algorithm are fulfilled and,
consequently, every accumulation point of the sequence $\{u^k,w^k\}$ generated by Algorithm~\ref{alg:newton} is a stationary point of problem~\eqref{eq:reg_discrete_problem_tgv}.

%%%%%%%%%%%%%%%%%%%%%%%%%%%%%%%
%%%%%%%%%%%%%%%%%%%%%%%%%%%%%%%
%%%%%%%%%%%%%%%%%%%%%%%%%%%%%%%

\section{Numerical Experiments}
In this section, we report on numerical experiments whose primary goal is to show the behavior of TGV concerning other regularizations for the solution of the data assimilation problem of the inviscid Burgers equation. Moreover, the performance of the proposed solution algorithm for the TGV data assimilation problem is experimentally verified.

Two main experiments have been designed for these purposes. The first experiment aims to verify the well-known staircase effect of total variation (TV) and also to show how the total generalized variation regularization (TGV) behaves in such cases. The second experiment has been constructed with the purpose of illustrating the performance of the proposed algorithm regarding its convergence, and its behavior with respect to the regularization parameters.

We perform the following steps for the construction of each experiment.
\begin{enumerate}
\item Set an exact solution (initial condition).
\item Solve the state equation with the exact solution and extract perfect observations.
\item Add a Gaussian noise to the exact solution for the background information.
\item Fix the parameters $\alpha$, $\beta$, $\gamma$ and $\mu$.
\end{enumerate}

The parameters for the TGV--regularization were chosen following the heuristic described in~\cite{de2017bilevel}, where $\dfrac{\beta}{\alpha}\in \dfrac{1}{n}(0.75,1.5).$
For the background matrix we set $B=(0.1)\mathbb{I}$, and we assume perfect observations, i.e., $R=\mathbb{I}$. Finally, as stopping criteria we use
\[
| u^{k}-u^{k-1}| <1e-3.
\]

\subsection{Comparison between TV and TGV regularization}
This experiment is designed with the purpose of showing the staircase effect of the TV regularization on the data assimilation problem and the way how the TGV regularization eliminates it. We solve problem~\eqref{eq:reg_discrete_optimality_system} with $n=50$ spatial discretization points and $N_t=150$ points for the time discretization. Furthermore, we use 25 observations from 7500 possible choices.
The exact solution for this experiment is given by the following expression:
\begin{figure}[!htb]
\centering
\begin{minipage}{.5\textwidth}
%u_ex=(x>=2.5).*(x<=5).*((2*x/5)-1)+(x>=5).*(x<=7.5).*((-4*x/5)+6);
\centering
\[
u_{\hbox{ex}}=\left\{
\begin{array}{ll}
\dfrac{2x}{5}-1&\hbox{if }2.5\leq x\leq 5,\\
-\dfrac{4x}{5}+6&\hbox{if }5\leq x\leq 7.5,\\
0&\hbox{otherwise.}
\end{array}
\right.
\]
%\caption{$dt=0.1$}
\label{fig:prob1_6_2}
\end{minipage}%
\begin{minipage}{0.5\textwidth}
\centering
\begin{figure}[H]
\begin{center}
\begin{tikzpicture}[scale=0.75]
\begin{axis}[xlabel={$\Omega$},
 legend pos=north east,
 legend style={font=\tiny,draw=none}]
\addplot[domain=0:2.5,blue] {0};
\addplot[domain=2.5:5,blue] {(2*x/5)-1};
\addplot[domain=5:7.5,blue] {(-4*x/5)+6};
\addplot[domain=7.5:10,blue] {0};
\draw[dotted] (axis cs: 5,1) -- (axis cs: 5,2);
\addplot[holdot] coordinates{(5,1)};
\addplot[soldot] coordinates{(5,2)};
%\legend{Exact solution}
\end{axis}
\end{tikzpicture}
\end{center}
\end{figure}
\end{minipage}
\caption{Exact solution }
\label{fig:func1_exacta}
\end{figure}

As quality measure we will use the SSIM (structural similarity index)~\cite{wang2004image}, typically used in image and signal processing. This index is given by the following expression
\[
SSIM(x,y)=\dfrac{(2\mu_x\mu_y+C_1)(2\sigma_{xy}+C_2)}{(\mu_x^2+\mu_y^2+C_1)(\sigma_x^2+\sigma_y^2+C_2)}
\]
where $\mu_x,\mu_y$ are the mean and $\sigma_x^2,\sigma_y^2$ are the variances of the vectors $x,y$, respectively. Furthermore, $\sigma_{xy}$ is the covariance between $x,y$ and the constants $C_1=k_1 L^2$, $C_2=k_2 L^2$ with $k_1=0.01$, $k_2=0.03$ and $L=2$. It is easy to note that $\hbox{SSIM}=1$ if the two functions are identical. Therefore, we will look for a set of parameters that guarantee us to be as close as possible to 1. Figure~\ref{fig:TV_TGV_SSIM} shows the value of the SSIM index as we vary the parameters for the TV and TGV regularizations.
\begin{figure}
\centering
\input{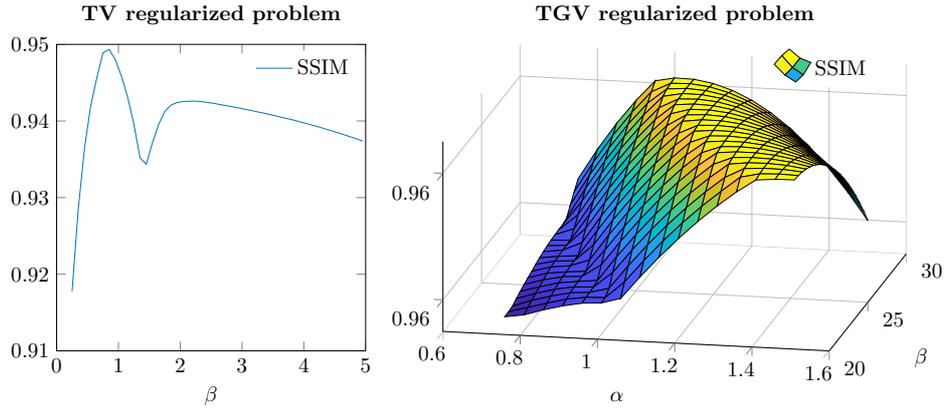}
\caption{SSIM index for the TV and TGV regularizations}
\label{fig:TV_TGV_SSIM}
\end{figure}

Now we choose the maximum value of SSIM for each problem and the solutions obtained for the best cases in terms of the SSIM index are presented in Figure~\ref{fig:TV_TGV_solutions}.
The best TV solution was obtained by setting $\beta=0.85$ and the algorithm converges after 21 iterations  with $\gamma=1e5$ and $SSIM=0.9495$. On the other hand, the best TGV solution was obtained with $\alpha=23.5$ and $\beta=0.611$ and the algorithm converges after 15 iterations with $\gamma=1e4$, $\mu=1e-10$ and the value of $SSIM=0.9581$. Furthermore, Figure~\ref{fig:state_vs_observations} shows the way the optimal state obtained with the TGV regularization fits with the given observations for 3 specific snapshots.
\begin{figure}
\centering
% This file was created by matlab2tikz.
%
%The latest updates can be retrieved from
%  http://www.mathworks.com/matlabcentral/fileexchange/22022-matlab2tikz-matlab2tikz
%where you can also make suggestions and rate matlab2tikz.
%
\definecolor{mycolor1}{rgb}{0.00000,0.44700,0.74100}%
\definecolor{mycolor2}{rgb}{0.85000,0.32500,0.09800}%
\begin{tikzpicture}[scale=0.7]

\begin{axis}[%
width=3in,
height=3in,
at={(0in,0.481in)},
scale only axis,
xmin=0,
xmax=10,
ymin=-0.1,
ymax=2,
axis background/.style={fill=white},
legend style={at={(0.01,0.75)}, anchor=south west, legend cell align=left, align=left, draw=white!15!white,font=\small},
title style={font=\bfseries},
 title={TV regularized problem}]
]
\addplot [color=blue]
  table[row sep=crcr]{%
.196078431372549	0.0321463660032439\\
0.392156862745098	0.0517960424650268\\
0.588235294117647	0.0522026614381039\\
0.784313725490196	0.052203122894656\\
0.980392156862745	0.0522026830959553\\
1.17647058823529	0.0522026903659775\\
1.37254901960784	0.0522020501033978\\
1.56862745098039	0.0522026534169827\\
1.76470588235294	0.0522022730996443\\
1.96078431372549	0.0463569267535216\\
2.15686274509804	0.00271484936557364\\
2.35294117647059	-0.0154232002524197\\
2.54901960784314	-0.015423788227555\\
2.74509803921569	-0.0154231441003094\\
2.94117647058824	0.222196888367394\\
3.13725490196078	0.222197898135283\\
3.33333333333333	0.222198137149364\\
3.52941176470588	0.563039344160285\\
3.72549019607843	0.56303940934267\\
3.92156862745098	0.563040112733704\\
4.11764705882353	0.563041075595357\\
4.31372549019608	0.562311378857078\\
4.50980392156863	0.564138096950054\\
4.70588235294118	0.564171004887146\\
4.90196078431373	0.832795362381735\\
};
\addplot [color=red,mark=diamond]
  table[row sep=crcr]{%
0.196078431372549	0\\
0.392156862745098	0\\
0.588235294117647	0\\
0.784313725490196	0\\
0.980392156862745	0\\
1.17647058823529	0\\
1.37254901960784	0\\
1.56862745098039	0\\
1.76470588235294	0\\
1.96078431372549	0\\
2.15686274509804	0\\
2.35294117647059	0\\
2.54901960784314	0.0196078431372551\\
2.74509803921569	0.0980392156862746\\
2.94117647058824	0.176470588235294\\
3.13725490196078	0.254901960784314\\
3.33333333333333	0.333333333333333\\
3.52941176470588	0.411764705882353\\
3.72549019607843	0.490196078431373\\
3.92156862745098	0.568627450980392\\
4.11764705882353	0.647058823529412\\
4.31372549019608	0.725490196078431\\
4.50980392156863	0.803921568627451\\
4.70588235294118	0.882352941176471\\
4.90196078431373	0.96078431372549\\
};
\addplot [color=blue]
  table[row sep=crcr]{%
5.09803921568628	1.50330615149446\\
5.29411764705882	1.50330720654404\\
5.49019607843137	1.50330804830467\\
5.68627450980392	1.50330849715953\\
5.88235294117647	1.26195069351949\\
6.07843137254902	1.26174891801921\\
6.27450980392157	1.50783110976155\\
6.47058823529412	1.50780028169074\\
6.66666666666667	1.2148306474393\\
6.86274509803922	0.705490135498072\\
7.05882352941177	0.475347952463893\\
7.25490196078431	0.203631711946308\\
7.45098039215686	-0.0102171574555555\\
7.64705882352941	-0.0102166691222573\\
7.84313725490196	-0.0102176208008309\\
8.03921568627451	-0.0102173724433222\\
8.23529411764706	-0.0102177362123882\\
8.43137254901961	-0.0102182914795476\\
8.62745098039216	-0.0102174874839479\\
8.82352941176471	-0.0102164349233521\\
9.01960784313725	-0.0101908171810018\\
9.2156862745098	-0.0101914838554064\\
9.41176470588235	-0.01019175668178\\
9.6078431372549	-0.0973274856990584\\
9.80392156862745	-0.234228695214752\\
};
\addlegendentry{TV Solution}

\addplot [color=red,mark=diamond]
  table[row sep=crcr]{%
5.09803921568628	1.92156862745098\\
5.29411764705882	1.76470588235294\\
5.49019607843137	1.6078431372549\\
5.68627450980392	1.45098039215686\\
5.88235294117647	1.29411764705882\\
6.07843137254902	1.13725490196078\\
6.27450980392157	0.980392156862744\\
6.47058823529412	0.823529411764706\\
6.66666666666667	0.666666666666666\\
6.86274509803922	0.509803921568627\\
7.05882352941177	0.352941176470588\\
7.25490196078431	0.196078431372548\\
7.45098039215686	0.0392156862745097\\
7.64705882352941	0\\
7.84313725490196	0\\
8.03921568627451	0\\
8.23529411764706	0\\
8.43137254901961	0\\
8.62745098039216	0\\
8.82352941176471	0\\
9.01960784313725	0\\
9.2156862745098	0\\
9.41176470588235	0\\
9.6078431372549	0\\
9.80392156862745	0\\
};
\addlegendentry{Exact Solution}
\end{axis}
\begin{axis}[%
width=3in,
height=3in,
at={(3.758in,0.481in)},
scale only axis,
xmin=0,
xmax=10,
ymin=-0.01,
ymax=2,
axis background/.style={fill=white},
legend style={at={(0.01,0.75)}, anchor=south west, legend cell align=left, align=left, draw=white!15!white, font=\small},
title style={font=\bfseries},
 title={TGV regularized problem}
]
\addplot [color=blue]
  table[row sep=crcr]{%
0.196078431372549	0.254256430929368\\
0.392156862745098	0.230587639445857\\
0.588235294117647	0.20691934920742\\
0.784313725490196	0.183251433713883\\
0.980392156862745	0.159584425609957\\
1.17647058823529	0.135919397851319\\
1.37254901960784	0.112256040940421\\
1.56862745098039	0.0885952691369372\\
1.76470588235294	0.0649356966801421\\
1.96078431372549	0.0412769707931848\\
2.15686274509804	0.0176203262022144\\
2.35294117647059	-0.00603360536521185\\
2.54901960784314	-0.029683332596686\\
2.74509803921569	0.0388393518023909\\
2.94117647058824	0.107365962072075\\
3.13725490196078	0.175894086297837\\
3.33333333333333	0.244423368939108\\
3.52941176470588	0.312954816101907\\
3.72549019607843	0.381485691270206\\
3.92156862745098	0.450017659728674\\
4.11764705882353	0.518551431156735\\
4.31372549019608	0.587087823094768\\
4.50980392156863	0.733749606432402\\
4.70588235294118	0.880414761157562\\
4.90196078431373	1.0270827519883\\
};
\addplot [color=red,mark=diamond]
  table[row sep=crcr]{%
0.196078431372549	0\\
0.392156862745098	0\\
0.588235294117647	0\\
0.784313725490196	0\\
0.980392156862745	0\\
1.17647058823529	0\\
1.37254901960784	0\\
1.56862745098039	0\\
1.76470588235294	0\\
1.96078431372549	0\\
2.15686274509804	0\\
2.35294117647059	0\\
2.54901960784314	0.0196078431372551\\
2.74509803921569	0.0980392156862746\\
2.94117647058824	0.176470588235294\\
3.13725490196078	0.254901960784314\\
3.33333333333333	0.333333333333333\\
3.52941176470588	0.411764705882353\\
3.72549019607843	0.490196078431373\\
3.92156862745098	0.568627450980392\\
4.11764705882353	0.647058823529412\\
4.31372549019608	0.725490196078431\\
4.50980392156863	0.803921568627451\\
4.70588235294118	0.882352941176471\\
4.90196078431373	0.96078431372549\\
};
\addplot [color=blue]
  table[row sep=crcr]{%
5.09803921568628	1.17375136050075\\
5.29411764705882	1.32041705343865\\
5.49019607843137	1.46707925289619\\
5.68627450980392	1.61373762183228\\
5.88235294117647	1.44332395440831\\
6.07843137254902	1.27290770470029\\
6.27450980392157	1.10248920127764\\
6.47058823529412	0.932065863339458\\
6.66666666666667	0.761639112395554\\
6.86274509803922	0.591210636224297\\
7.05882352941177	0.42078231483429\\
7.25490196078431	0.250355377684177\\
7.45098039215686	0.079931206942977\\
7.64705882352941	0.0161544813133475\\
7.84313725490196	-0.0476192304786482\\
8.03921568627451	-0.0522560192361415\\
8.23529411764706	-0.0568889865559912\\
8.43137254901961	-0.0615189311178032\\
8.62745098039216	-0.0661455562269677\\
8.82352941176471	-0.0707701307525338\\
9.01960784313725	-0.0753935237634316\\
9.2156862745098	-0.0800173732210846\\
9.41176470588235	-0.0846404701988046\\
9.6078431372549	-0.0892632509701809\\
9.80392156862745	-0.0938835593592331\\
};
\addlegendentry{TGV Solution}

\addplot [color=red,mark=diamond]
  table[row sep=crcr]{%
5.09803921568628	1.92156862745098\\
5.29411764705882	1.76470588235294\\
5.49019607843137	1.6078431372549\\
5.68627450980392	1.45098039215686\\
5.88235294117647	1.29411764705882\\
6.07843137254902	1.13725490196078\\
6.27450980392157	0.980392156862744\\
6.47058823529412	0.823529411764706\\
6.66666666666667	0.666666666666666\\
6.86274509803922	0.509803921568627\\
7.05882352941177	0.352941176470588\\
7.25490196078431	0.196078431372548\\
7.45098039215686	0.0392156862745097\\
7.64705882352941	0\\
7.84313725490196	0\\
8.03921568627451	0\\
8.23529411764706	0\\
8.43137254901961	0\\
8.62745098039216	0\\
8.82352941176471	0\\
9.01960784313725	0\\
9.2156862745098	0\\
9.41176470588235	0\\
9.6078431372549	0\\
9.80392156862745	0\\
};
\addlegendentry{Exact solution}

\end{axis}
\end{tikzpicture}%
\caption{Best solutions for TV (left) and TGV (right) regularizations}
\label{fig:TV_TGV_solutions}
\end{figure}
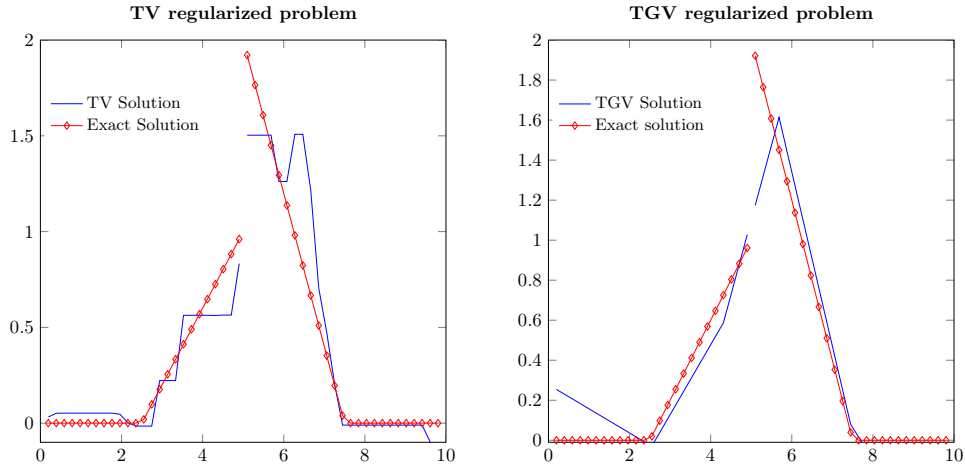

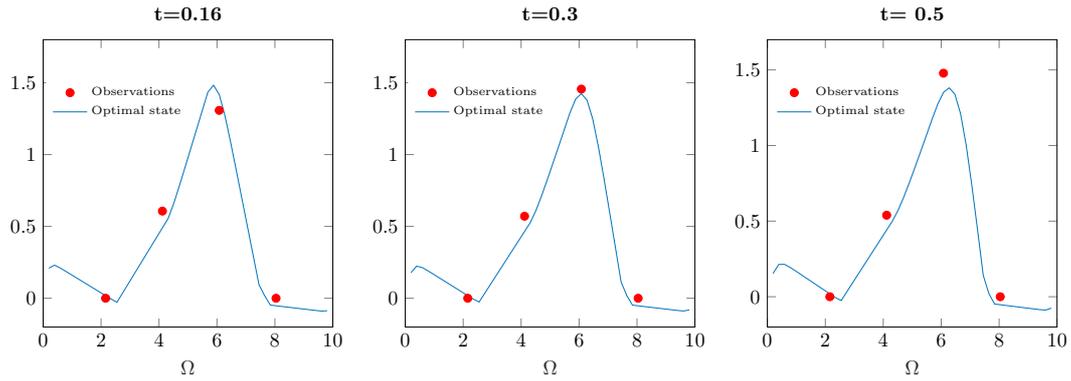
\begin{figure}
\centering
% This file was created by matlab2tikz.
%
%The latest updates can be retrieved from
%  http://www.mathworks.com/matlabcentral/fileexchange/22022-matlab2tikz-matlab2tikz
%where you can also make suggestions and rate matlab2tikz.
%
\pgfplotsset{compat=1.6}
\definecolor{mycolor1}{rgb}{0.00000,0.44700,0.74100}%
\pgfplotsset{soldot/.style={color=red,only marks,mark=*}}

\begin{tikzpicture}[scale=0.75]

\begin{axis}[%
width=2in,
height=2in,
at={(0in,0.771in)},
scale only axis,
xmin=0,
xmax=10,
xlabel style={font=\color{white!15!black}},
xlabel={$\Omega$},
ymin=-0.2,
ymax=1.8,
axis background/.style={fill=white},
title style={font=\bfseries},
title={t=0.16},
legend style={at={(0.01,0.7)}, anchor=south west, legend cell align=left, align=left, draw=white!15!white, font=\tiny}
]
\addplot[soldot] 
 table[row sep=crcr]{%
%    0.1961         0\\
    2.1569         0\\
    4.1176    0.6066\\
    6.0784    1.3078\\
    8.0392         0\\
};
\addplot [color=mycolor1]
  table[row sep=crcr]{%
0.196078431372549	0.209072177187739\\
0.392156862745098	0.230228936525204\\
0.588235294117647	0.210841601305604\\
0.784313725490196	0.186999323508575\\
0.980392156862745	0.162860237476178\\
1.17647058823529	0.138709584847728\\
1.37254901960784	0.114560299891838\\
1.56862745098039	0.0904136472324811\\
1.76470588235294	0.0662683991791708\\
1.96078431372549	0.0421240835161565\\
2.15686274509804	0.0179819096876187\\
2.35294117647059	-0.00615122892736066\\
2.54901960784314	-0.0280496035829286\\
2.74509803921569	0.0367016883271589\\
2.94117647058824	0.101456375332061\\
3.13725490196078	0.166212405335463\\
3.33333333333333	0.230969441178055\\
3.52941176470588	0.295728191474272\\
3.72549019607843	0.36048686727797\\
3.92156862745098	0.425246199825677\\
4.11764705882353	0.490006850595165\\
4.31372549019608	0.554769466464199\\
4.50980392156863	0.660670943080903\\
4.70588235294118	0.784635535374948\\
4.90196078431373	0.913600837212687\\
5.09803921568628	1.04371402525428\\
5.29411764705882	1.17406545865576\\
5.49019607843137	1.304462263746\\
5.68627450980392	1.43486576891284\\
5.88235294117647	1.48289702091551\\
6.07843137254902	1.41788773104681\\
6.27450980392157	1.27106091145765\\
6.47058823529412	1.08650200884411\\
6.66666666666667	0.890185448440238\\
6.86274509803922	0.691317953484788\\
7.05882352941177	0.49206186654087\\
7.25490196078431	0.29276632893473\\
7.45098039215686	0.0934715759437987\\
7.64705882352941	0.0171432065696621\\
7.84313725490196	-0.0478076497670634\\
8.03921568627451	-0.0524626147697968\\
8.23529411764706	-0.0571137506645621\\
8.43137254901961	-0.061761814434364\\
8.62745098039216	-0.0664065910921939\\
8.82352941176471	-0.0710493472883455\\
9.01960784313725	-0.0756910062042069\\
9.2156862745098	-0.0803329094549188\\
9.41176470588235	-0.084966900747103\\
9.6078431372549	-0.0893274850181374\\
9.80392156862745	-0.0869452400451674\\
};
\addlegendentry{Observations}
\addlegendentry{Optimal state}

\end{axis}

\begin{axis}[%
width=2in,
height=2in,
at={(2.5in,0.771in)},
scale only axis,
xmin=0,
xmax=10,
xlabel style={font=\color{white!15!black}},
xlabel={$\Omega$},
ymin=-0.2,
ymax=1.8,
axis background/.style={fill=white},
title style={font=\bfseries},
title={t=0.3},
legend style={at={(0.01,0.7)}, anchor=south west, legend cell align=left, align=left, draw=white!15!white, font=\tiny}
]
\addplot[soldot] 
table[row sep=crcr]{%
%	0.1961         0\\
    2.1569         0\\
    4.1176    0.5709\\
    6.0784    1.4560\\
    8.0392         0\\
   };
\addplot [color=mycolor1]
  table[row sep=crcr]{%
0.196078431372549	0.177524125142824\\
0.392156862745098	0.22342752927215\\
0.588235294117647	0.213492831035088\\
0.784313725490196	0.190754497889114\\
0.980392156862745	0.166263458417803\\
1.17647058823529	0.141616306080575\\
1.37254901960784	0.116961146449132\\
1.56862745098039	0.0923082525212149\\
1.76470588235294	0.067656957489887\\
1.96078431372549	0.0430066967235785\\
2.15686274509804	0.0183586452810539\\
2.35294117647059	-0.0062623260439875\\
2.54901960784314	-0.0265863292895785\\
2.74509803921569	0.0347870574521484\\
2.94117647058824	0.0961634032105985\\
3.13725490196078	0.157540945807745\\
3.33333333333333	0.218919370237862\\
3.52941176470588	0.280299179285187\\
3.72549019607843	0.341679181053848\\
3.92156862745098	0.403059638841272\\
4.11764705882353	0.464441058861332\\
4.31372549019608	0.525823964016043\\
4.50980392156863	0.610065162301083\\
4.70588235294118	0.712900650917351\\
4.90196078431373	0.824838160826502\\
5.09803921568628	0.940368114944192\\
5.29411764705882	1.05716279437852\\
5.49019607843137	1.17437222825406\\
5.68627450980392	1.29171077254393\\
5.88235294117647	1.38665145459797\\
6.07843137254902	1.42429501203527\\
6.27450980392157	1.37842700359808\\
6.47058823529412	1.24741148900378\\
6.66666666666667	1.05413686153641\\
6.86274509803922	0.828752650219049\\
7.05882352941177	0.591953475060202\\
7.25490196078431	0.352443770095666\\
7.45098039215686	0.112534430783238\\
7.64705882352941	0.0184259708370086\\
7.84313725490196	-0.0479975590559808\\
8.03921568627451	-0.0526708436430029\\
8.23529411764706	-0.0573402894309202\\
8.43137254901961	-0.0620066171821549\\
8.62745098039216	-0.0666696906263299\\
8.82352941176471	-0.0713307761557221\\
9.01960784313725	-0.0759907934631254\\
9.2156862745098	-0.0806493217417366\\
9.41176470588235	-0.0852582507866007\\
9.6078431372549	-0.0889193424142085\\
9.80392156862745	-0.0809618776419906\\
};
\addlegendentry{Observations}
\addlegendentry{Optimal state}
\end{axis}

\begin{axis}[%
width=2in,
height=2in,
at={(5in,0.771in)},
scale only axis,
xmin=0,
xmax=10,
xlabel style={font=\color{white!15!black}},
xlabel={$\Omega$},
ymin=-0.2,
ymax=1.7,
axis background/.style={fill=white},
title style={font=\bfseries},
title={t= 0.5},
legend style={at={(0.01,0.7)}, anchor=south west, legend cell align=left, align=left, draw=white!15!white, font=\tiny}
]
\addplot[soldot] 
table[row sep=crcr]{%
% 	0.1961         0\\
    2.1569         0\\
    4.1176    0.5392\\
    6.0784    1.4783\\
    8.0392         0\\
   };
\addplot [color=mycolor1]
  table[row sep=crcr]{%
0.196078431372549	0.154248690832525\\
0.392156862745098	0.213478230578008\\
0.588235294117647	0.214283926792817\\
0.784313725490196	0.194263926346919\\
0.980392156862745	0.169765503404633\\
1.17647058823529	0.144643999785804\\
1.37254901960784	0.11946462247144\\
1.56862745098039	0.0942839756055751\\
1.76470588235294	0.0691049591407052\\
1.96078431372549	0.0439270900124266\\
2.15686274509804	0.018751505826078\\
2.35294117647059	-0.00636734515466487\\
2.54901960784314	-0.0252681558343476\\
2.74509803921569	0.0330622847232938\\
2.94117647058824	0.091395322321582\\
3.13725490196078	0.149729431182116\\
3.33333333333333	0.20806431954806\\
3.52941176470588	0.266400345609868\\
3.72549019607843	0.32473670123507\\
3.92156862745098	0.383073420728189\\
4.11764705882353	0.441410855454344\\
4.31372549019608	0.499749421204557\\
4.50980392156863	0.571302882646853\\
4.70588235294118	0.658039693748156\\
4.90196078431373	0.75497959578543\\
5.09803921568628	0.857335263506883\\
5.29411764705882	0.962227780076291\\
5.49019607843137	1.06822210892322\\
5.68627450980392	1.17467121743865\\
5.88235294117647	1.2745619111107\\
6.07843137254902	1.35135562438436\\
6.27450980392157	1.3813279153104\\
6.47058823529412	1.33972591260107\\
6.66666666666667	1.21117519492721\\
6.86274509803922	1.0008000428815\\
7.05882352941177	0.73425035572149\\
7.25490196078431	0.441552889786501\\
7.45098039215686	0.141321587407469\\
7.64705882352941	0.0201793457134237\\
7.84313725490196	-0.0481889760714117\\
8.03921568627451	-0.0528807252013638\\
8.23529411764706	-0.0575686240409989\\
8.43137254901961	-0.0622533623001695\\
8.62745098039216	-0.066934879451501\\
8.82352941176471	-0.0716144354353461\\
9.01960784313725	-0.07629271057959\\
9.2156862745098	-0.0809632261314262\\
9.41176470588235	-0.0854873992814035\\
9.6078431372549	-0.0881405210012945\\
9.80392156862745	-0.0757490126896423\\
};
\addlegendentry{Observations}
\addlegendentry{Optimal state}

\end{axis}
\end{tikzpicture}%
\caption{Observations vs Optimal state}
\label{fig:state_vs_observations}
\end{figure}

\subsection{Performance of the algorithm}
This experiment is devoted to the study of the performance of the algorithm concerning its rate of convergence, global convergence, behavior with respect to the parameters and generation of descent directions. We use different exact solutions, shown in  Figure~\ref{fig:exact_solutions}.
\begin{figure}[!htb]
\centering
\begin{minipage}{.3\textwidth}
\centering
\begin{figure}[H]
\begin{center}
\begin{tikzpicture}[scale=0.75]
\begin{axis}[xlabel={$\Omega$},
 legend pos=north east,
 legend style={font=\tiny,draw=none},
 title style={font=\bfseries},
 title={Experiment 1}]
\addplot[domain=0:5,blue] {x/5};
\addplot[domain=5:10,blue] {(-2/5)*(x-10)};
\draw[dotted] (axis cs: 5,1) -- (axis cs: 5,2);
\addplot[holdot] coordinates{(5,1)};
\addplot[soldot] coordinates{(5,2)};
%\legend{Exact solution}
\end{axis}
\end{tikzpicture}
\end{center}
\end{figure}
\end{minipage}%
\begin{minipage}{0.3\textwidth}
\centering
\begin{figure}[H]
\begin{center}
\begin{tikzpicture}[scale=0.75]
\begin{axis}[xlabel={$\Omega$},
 legend pos=north east,
 legend style={font=\tiny,draw=none},
 title style={font=\bfseries},
title={Experiment 2}]
\addplot[domain=0:2.5,blue] {0};
\addplot[domain=2.5:5,blue] {(2*x/5-1)};
\addplot[domain=5:7.5,blue] {(-4*x/5+6)};
\addplot[domain=7.5:10,blue] {0};
\draw[dotted] (axis cs: 5,1) -- (axis cs: 5,2);
\addplot[holdot] coordinates{(5,1)};
\addplot[soldot] coordinates{(5,2)};
%\legend{Exact solution}
\end{axis}
\end{tikzpicture}
\end{center}
\end{figure}
\end{minipage}
\begin{minipage}{0.3\textwidth}
\begin{figure}[H]
\begin{center}
\begin{tikzpicture}[scale=0.75]
\begin{axis}[xlabel={$\Omega$},
 legend pos=north east,
 legend style={font=\tiny,draw=none},
 title style={font=\bfseries},
title={Experiment 3}]
\addplot[domain=0:2,blue] {3*x/2};
\addplot[domain=2:5,blue] {x-2};
\addplot[domain=5:8,blue] {x-5};
\addplot[domain=8:10,blue] {(-x+10)};
\draw[dotted] (axis cs: 2,0) -- (axis cs: 2,3);
\draw[dotted] (axis cs: 5,0) -- (axis cs: 5,3);
\draw[dotted] (axis cs: 8,2) -- (axis cs: 8,3);
\addplot[holdot] coordinates{(2,0) (5,0) (8,2)};
\addplot[soldot] coordinates{(2,3) (5,3) (8,3)};
%\legend{Exact solution}
\end{axis}
\end{tikzpicture}
\end{center}
\end{figure}
\end{minipage}
\caption{Exact solutions for the experiment}
\label{fig:exact_solutions}
\end{figure}

The first part of the experiment is concerned with the global convergence of the algorithm. For this purpose we use different exact solutions and provide the algorithm with different starting points.  In particular, we use 5 different initial points which are $u^0\equiv 0.5,1,2$, $u^0$ randomly generated with a uniform distribution in the interval $[0,1]$ and $u^0$ as the solution of the problem with the TV regularization. Figure~\ref{fig:global_convergence} shows that, in each case, the same value of the objective function is obtained when the algorithm stops. Moreover, this experiment allows us to show numerically that in all the cases the algorithm generates in fact descent directions. We use a fixed value of the parameters $\alpha,\beta$ as follows: for experiments 1 and 2 we use $\alpha=10$ and $\beta=0.2$ and for experiment 3, $\alpha=5,\beta=0.1$. Furthermore, we use the parameters  $\gamma=1e4$, $\mu=1e-10$.

\begin{figure}
\centering
% This file was created by matlab2tikz.
%
%The latest updates can be retrieved from
%  http://www.mathworks.com/matlabcentral/fileexchange/22022-matlab2tikz-matlab2tikz
%where you can also make suggestions and rate matlab2tikz.
%
\definecolor{mycolor1}{rgb}{0.00000,0.44700,0.74100}%
\definecolor{mycolor2}{rgb}{0.85000,0.32500,0.09800}%
\definecolor{mycolor3}{rgb}{0.92900,0.69400,0.12500}%
\definecolor{mycolor4}{rgb}{0.49400,0.18400,0.55600}%
\definecolor{mycolor5}{rgb}{0.46600,0.67400,0.18800}%
\begin{tikzpicture}

\begin{axis}[%
xlabel={Iterations},
width=1.5in,
height=1.5in,
at={(0in,0.822in)},
scale only axis,
xmin=4,
xmax=15,
ymin=20,
ymax=60,
axis background/.style={fill=white},
legend style={legend cell align=left, align=left, draw=white!15!white, font=\scriptsize},
title style={font=\bfseries},
title={Experiment 1}]
\addplot [color=mycolor1]
  table[row sep=crcr]{%
1	599.001825136983\\
2	290.299336265704\\
3	268.436296388154\\
4	55.0563125351357\\
5	35.3393727503547\\
6	34.0641434191888\\
7	27.6063426448655\\
8	26.2939417498091\\
9	25.8214580515111\\
10	25.5721678779217\\
11	25.4883879872968\\
12	25.3872254409836\\
13	25.3826056036218\\
};
\addlegendentry{$u^0\equiv 1$}

\addplot [color=mycolor2,mark=x]
  table[row sep=crcr]{%
1	601.601624875417\\
2	290.629070724788\\
3	268.697524469\\
4	55.0961437466675\\
5	34.1263151385787\\
6	33.3241611437591\\
7	28.1357048453114\\
8	26.4475161789633\\
9	26.716315341161\\
10	26.0032862862362\\
11	25.502873429737\\
12	25.3877673367025\\
13	25.3826160466234\\
};
\addlegendentry{$u^0\equiv 0.5$}

\addplot [color=mycolor3,mark=square]
  table[row sep=crcr]{%
1	986.965383076948\\
2	290.079331251316\\
3	271.073321311696\\
4	56.2867524566686\\
5	41.6872726805469\\
6	40.7081539624774\\
7	34.7548214796279\\
8	29.0401666225502\\
9	26.4039176890667\\
10	25.5097567719952\\
11	25.3965763867444\\
12	25.3828594054286\\
};
\addlegendentry{$u^0\equiv 2$}

\addplot [color=mycolor4,mark=*]
  table[row sep=crcr]{%
1	1181.11912700718\\
2	1691.3913526722\\
3	938.598782240268\\
4	72.1615890951217\\
5	40.4814534273788\\
6	30.28798531505\\
7	26.6104087970643\\
8	26.1643396065931\\
9	26.4278139056936\\
10	26.0378959322996\\
11	25.5033577889716\\
12	25.3877174824979\\
13	25.3826158160677\\
};
\addlegendentry{$u^0$ random}

\addplot [color=mycolor5,mark=diamond]
  table[row sep=crcr]{%
1	571.453641146676\\
2	389.918183170552\\
3	305.500920109603\\
4	58.0972558183096\\
5	36.5521533474039\\
6	29.1518000076786\\
7	27.9047801193893\\
8	26.3837532667287\\
9	27.447367252854\\
10	25.8874835193121\\
11	25.4959686913139\\
12	25.387252699631\\
13	25.3826159388096\\
};
\addlegendentry{$u^0$ TV sol.}

\end{axis}
\begin{axis}[%
xlabel={Iterations},
width=1.5in,
height=1.5in,
at={(2in,0.822in)},
scale only axis,
xmin=4,
xmax=14,
ymin=20,
ymax=60,
axis background/.style={fill=white},
legend style={legend cell align=left, align=left, draw=white!15!white, font=\scriptsize},
title style={font=\bfseries},
title={Experiment 2}]
\addplot [color=mycolor1]
  table[row sep=crcr]{%
1	690.973232229496\\
2	295.912295443652\\
3	269.109820591289\\
4	54.5603918759692\\
5	33.1401984767617\\
6	33.0868990047518\\
7	32.1832302339785\\
8	27.6760858741863\\
9	27.6098293284563\\
10	26.5480099302552\\
11	26.3471413645299\\
12	26.3232940453635\\
};
\addlegendentry{$u^0\equiv 1$}

\addplot [color=mycolor2,mark=x]
  table[row sep=crcr]{%
1	592.585328486896\\
2	296.112108301666\\
3	268.355252300249\\
4	54.5528088279234\\
5	33.1450471008864\\
6	33.0867524094159\\
7	31.9000072807735\\
8	27.3760840459489\\
9	27.233756451214\\
10	26.4929377886885\\
11	26.3288776955298\\
12	26.3234307855411\\
13	26.3226704051103\\
};
\addlegendentry{$u^0\equiv 0.5$}

\addplot [color=mycolor3,mark=square]
  table[row sep=crcr]{%
1	1280.77669531549\\
2	295.91222199074\\
3	278.715216502577\\
4	54.2282912709529\\
5	33.1593095117523\\
6	31.8136420317424\\
7	29.2796029740465\\
8	27.1825773742598\\
9	27.2001277582859\\
10	26.5316027281105\\
11	26.3485645026717\\
12	26.3234475855123\\
};
\addlegendentry{$u^0\equiv 2$}

\addplot [color=mycolor4,mark=*]
  table[row sep=crcr]{%
1	1152.93432575295\\
2	1668.33563343729\\
3	924.051964944719\\
4	70.6932105659687\\
5	45.412072686993\\
6	31.5367725510287\\
7	29.910186790793\\
8	27.0277582513395\\
9	26.3475956672324\\
10	26.3228465698942\\
};
\addlegendentry{$u^0$ random}

\addplot [color=mycolor5,mark=diamond]
  table[row sep=crcr]{%
1	573.139251486831\\
2	543.865523870716\\
3	345.080389595103\\
4	60.9894620136634\\
5	33.1090556229721\\
6	32.2512201372898\\
7	31.2511561403839\\
8	28.1097747777558\\
9	27.2979464519839\\
10	26.6206665500061\\
11	26.3555433175205\\
12	26.3238895976919\\
};
\addlegendentry{$u^0$ TV sol.}
\end{axis}
 
\begin{axis}[%
xlabel={Iterations},
width=1.5in,
height=1.5in,
at={(4in,0.822in)},
scale only axis,
xmin=7,
xmax=18,
ymin=45,
ymax=60,
axis background/.style={fill=white},
legend style={legend cell align=left, align=left, draw=white!15!white, font=\scriptsize},
title style={font=\bfseries},
title={Experiment 3},]
\addplot [color=mycolor1]
  table[row sep=crcr]{%
7	52.0316961910515\\
8	47.2831492939565\\
9	45.8583773937424\\
10	45.8591640533777\\
11	45.855069502879\\
12	45.8568617390389\\
13	45.8593107798821\\
14	45.8612909801481\\
15	45.8627728711408\\
16	45.8638711115312\\
17	45.8646005462306\\
};
\addlegendentry{$u^0\equiv 1$}
\addplot [color=mycolor2,mark=x]
  table[row sep=crcr]{%
7	57.7052547561828\\
8	48.694256852619\\
9	48.6155232647761\\
10	46.366169760463\\
11	45.9693848122852\\
12	45.8679546412013\\
};
\addlegendentry{$u^0\equiv 0.5$}

\addplot [color=mycolor3,mark=square]
  table[row sep=crcr]{%
7	52.3971957536433\\
8	47.289349807016\\
9	45.8587458558373\\
10	45.8593711032709\\
11	45.8552253408367\\
12	45.8573060944896\\
13	45.8598813795576\\
14	45.8618331920093\\
15	45.8632217374232\\
16	45.8642027355914\\
17	45.8648152312689\\
};
\addlegendentry{$u^0\equiv 2$}

\addplot [color=mycolor4,mark=*]
  table[row sep=crcr]{%
7	58.6360990318433\\
8	49.2321431920376\\
9	48.2645966314958\\
10	47.6685869838215\\
11	46.9262025224347\\
12	46.0799761348514\\
13	45.8466228477207\\
14	45.8480310521091\\
15	45.8494251554981\\
16	45.8507825239347\\
17	45.852087132515\\
%18	45.8533281328092\\
%19	45.8544986980644\\
%20	45.8555950957437\\
%21	45.8566159455527\\
%22	45.8575616283845\\
%23	45.8584338177162\\
%24	45.8592351100926\\
%25	45.859968735566\\
%26	45.86063833247\\
%27	45.8612477737935\\
};
\addlegendentry{$u^0$ random}

\addplot [color=mycolor5,mark=diamond]
  table[row sep=crcr]{%
1	435.948444519896\\
2	549.106072254828\\
3	259.07108360499\\
4	69.1219313740891\\
5	52.0174579673466\\
6	47.1198948060668\\
7	46.3801728896117\\
8	45.9689714475658\\
9	45.9118658075313\\
10	45.8814457353539\\
11	45.8656735457916\\
12	45.8656158384082\\
13	45.8656121506537\\
14	45.8656350368906\\
};
\addlegendentry{$u^0$ TV sol.}
\end{axis}
\end{tikzpicture}%
\caption{Global convergence of the algorithm}
\label{fig:global_convergence}
\end{figure}
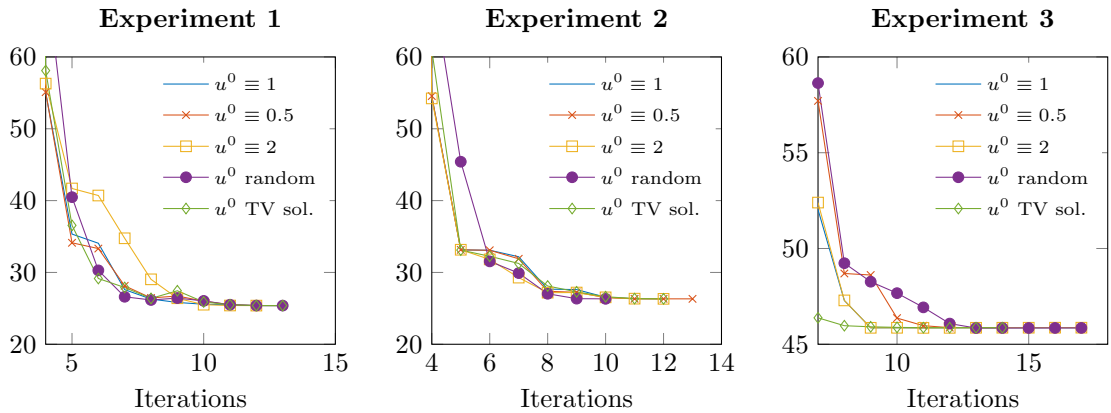

Another critical aspect in the study of the performance of the algorithm is the analysis of the rate of convergence. Since the proposed method is a second order method, it is natural to expect a local superlinear order, and that is what is analyzed in the following experiment. For this purpose, we solve the same problems as in the previous case but setting the initial value as $u^0\equiv 1$, and we vary the parameters $\alpha,\beta$. Figure~\ref{fig:superlinear_convergence} shows the decreasing of the residual computed as
\[
RES=\dfrac{\|u^{k}-u^\ast\|}{\|u^{k-1}-u^\ast\|},
\]
where we use $u^\ast$ as the final value to which the algorithm converges. Furtheremore, this Figure suggests the algorithm converges locally superlinearly, and since these results have been obtained by varying the values of $\alpha$ and $\beta$ we can also assure that this behavior is independent of the value of the parameters of the regularization.
\begin{figure}
\centering
% This file was created by matlab2tikz.
%
%The latest updates can be retrieved from
%  http://www.mathworks.com/matlabcentral/fileexchange/22022-matlab2tikz-matlab2tikz
%where you can also make suggestions and rate matlab2tikz.
%
\definecolor{mycolor1}{rgb}{0.00000,0.44700,0.74100}%
\definecolor{mycolor2}{rgb}{0.85000,0.32500,0.09800}%
\definecolor{mycolor3}{rgb}{0.46600,0.67400,0.18800}%
\begin{tikzpicture}[scale=0.6]

\begin{axis}[%
xlabel=Iterations,
yticklabel style = {font=\small},
width=2.5in,
height=2.5in,
at={(0in,0in)},
scale only axis,
xmin=0,
xmax=10,
ymode=log,
ymin=0.01,
ymax=1,
yminorticks=true,
axis background/.style={fill=white},
legend style={at={(0.05,0.01)}, anchor=south west, legend cell align=left, align=left, draw=white!15!white,font=\small},
title style={font=\bfseries},
title={Experiment 1}]
\addplot [color=mycolor1]
 table[row sep=crcr]{%
1	0.174501173221827\\
2	0.931649876416842\\
3	0.47455458922573\\
4	0.491782060091696\\
5	0.779512269883816\\
6	0.797760267148119\\
7	0.91282788156717\\
8	0.490094160498212\\
9	0.120705825554602\\
10	0.0437348175294836\\
11	0\\
};
\addlegendentry{$\alpha=5$, $\beta=0.1$}

\addplot [color=mycolor2,mark=*]
 table[row sep=crcr]{%
1	0.199206421533761\\
2	0.941368109097722\\
3	0.49840187423737\\
4	0.452545500382003\\
5	0.506343016654005\\
6	0.338228462118885\\
7	0.159751414913234\\
8	0.0296077356651341\\
9	0\\
};
\addlegendentry{$\alpha=7.5$, $\beta=0.1125$}

\addplot [color=mycolor3,mark=diamond]
   table[row sep=crcr]{%
1	0.230564496545264\\
2	0.948915738034386\\
3	0.573898693200439\\
4	0.652386699822468\\
5	0.698516424066236\\
6	0.708301309307832\\
7	0.373634364360073\\
8	0.368980011757095\\
9	0.272527343853006\\
10	0.0221011596817142\\
11	0\\
};
\addlegendentry{$\alpha=15$, $\beta=0.30$}
\end{axis}
\begin{axis}[%
xlabel=Iterations,
yticklabel style = {font=\small},
width=2.5in,
height=2.5in,
at={(3in,0in)},
scale only axis,
xmin=0,
xmax=12,
ymode=log,
ymin=0.01,
ymax=1,
yminorticks=true,
axis background/.style={fill=white},
legend style={at={(0.05,0.01)}, anchor=south west,legend cell align=left, align=left, draw=white!15!white,font=\small},
title style={font=\bfseries},
title={Experiment 2}]
\addplot [color=mycolor1]
   table[row sep=crcr]{%
1	0.203555453210146\\
2	0.804249399159003\\
3	0.546566868269708\\
4	0.680509324459661\\
5	0.74292297971891\\
6	0.746584013207724\\
7	0.640862379216379\\
8	0.917395568702667\\
9	0.585140295592688\\
10	0.321795913150692\\
11	0.224497799670839\\
12	0.0645593749761887\\
13	0\\
};
\addlegendentry{$\alpha=5$, $\beta=0.075$}

\addplot [color=mycolor2,mark=*]
   table[row sep=crcr]{%
1	0.160620069255196\\
2	0.618944761244025\\
3	0.489201421977967\\
4	0.624399755489374\\
5	0.421292665548904\\
6	0.389486288702267\\
7	0.211791036922554\\
8	0.0871899434428511\\
9	0.0306502409059147\\
10	0\\
};
\addlegendentry{$\alpha=7.5$, $\beta=0.1125$}

\addplot [color=mycolor3,mark=diamond]
  table[row sep=crcr]{%
1	0.174981260707609\\
2	0.876673809111089\\
3	0.465709214398852\\
4	0.430998462114148\\
5	0.558887814064777\\
6	0.552859124503897\\
7	0.326074105631621\\
8	0.0465168102973589\\
9	0.0195748413506587\\
10	0\\
};
\addlegendentry{$\alpha=17.5$, $\beta=0.35$}

\end{axis}
\begin{axis}[%
xlabel=Iterations,
yticklabel style = {font=\tiny},
width=2.5in,
height=2.5in,
at={(6in,0in)},
scale only axis,
xmin=0,
xmax=15,
ymode=log,
ymin=0.01,
ymax=1,
yminorticks=true,
axis background/.style={fill=white},
legend style={at={(0.05,0.01)}, anchor=south west, legend cell align=left, align=left, draw=white!15!white,font=\small},
title style={font=\bfseries},
title={Experiment 3}]
\addplot [color=mycolor1]
  table[row sep=crcr]{%
1	0.624175519473809\\
2	0.921145657399381\\
3	0.262016244982637\\
4	0.680951866686514\\
5	0.771616286413009\\
6	0.903682683190747\\
7	0.528763330124249\\
8	0.208349378311591\\
9	0.549254844238288\\
10	0.553690329796059\\
11	0.566118122503186\\
12	0.575091549477429\\
13	0.556457532459463\\
14	0.464380869686441\\
15	0.138630572924629\\
16	0\\
};
\addlegendentry{$\alpha=5$, $\beta=0.15$}

\addplot [color=mycolor2,mark=*]
   table[row sep=crcr]{%
1	0.669589225685003\\
2	0.712211167636499\\
3	0.525555562991837\\
4	0.552599365940038\\
5	0.597740336353764\\
6	0.718965712001551\\
7	0.660819042830584\\
8	0.730808192299391\\
9	0.20123665521268\\
10	0.0413662861638191\\
11	0.0128545202671186\\
12	0\\
};
\addlegendentry{$\alpha=17.5$, $\beta=0.2625$}

\addplot [color=mycolor3,mark=diamond]
   table[row sep=crcr]{%
1	0.651064235044329\\
2	0.738835861796462\\
3	0.538068733761443\\
4	0.555762810326979\\
5	0.749557991667042\\
6	0.906727175978125\\
7	0.316640701876493\\
8	0.14255368647612\\
9	0.0115944210063573\\
10	0\\
};
\addlegendentry{$\alpha=17.5$, $\beta=0.35$}

\end{axis}
\end{tikzpicture}%
\caption{Locally superlinear convergence of the algorithm}
\label{fig:superlinear_convergence}
\end{figure}
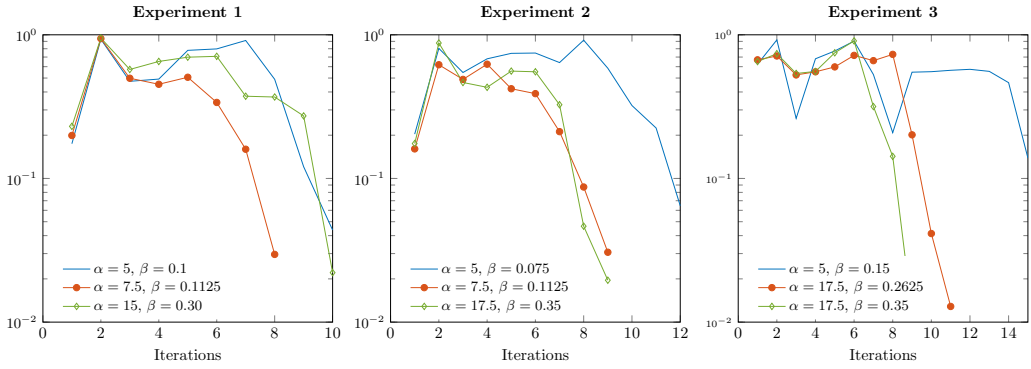

Finally, we study the behavior of the algorithm when we vary the value of $\mu$. From the theoretical point of view, the addition of this parameter is important to guarantee the generation of descent directions. However, we are also aware that adding this term is only an artifice. It is for this reason that the value of this parameter is important to obtain an equivalence with the original problem without losing the benefit that it has on the performance of the algorithm and also the principal aim of the last experiment.  We solve two experiments using as exact solutions the ones depicted in Figure~\ref{fig:exact_solutions_mu}. Both experiments were solved with $\alpha=2.5$, $\beta=$ and $\gamma=1e4$. The results are presented in Table~\ref{tab:varying_mu}, where \emph{NaN} means that the algorithm could not find a descent direction due to the singularity of the matrix. Adding the term involving the quadratic norm of $w$ let us assure that the matrix is positive definite and therefore nonsingular, that is, in any iteration, the algorithm finds a descent direction. Finally Table~\ref{tab:varying_mu} also shows that we can use small values for the parameter $\mu$ without changing the optimal value of the objective function.

\begin{figure}[!htb]
\begin{minipage}{0.3\textwidth}
\begin{figure}[H]
\begin{center}
\begin{tikzpicture}[scale=0.75]
\begin{axis}[xlabel={$\Omega$},
 legend pos=north east,
 legend style={font=\tiny,draw=none},
 title style={font=\bfseries},
 title={Experiment 3}]
\addplot[domain=0:2,blue] {3*x/2};
\addplot[domain=2:5,blue] {x-2};
\addplot[domain=5:8,blue] {x-5};
\addplot[domain=8:10,blue] {(-x+10)};
\draw[dotted] (axis cs: 2,0) -- (axis cs: 2,3);
\draw[dotted] (axis cs: 5,0) -- (axis cs: 5,3);
\draw[dotted] (axis cs: 8,2) -- (axis cs: 8,3);
\addplot[holdot] coordinates{(2,0) (5,0) (8,2)};
\addplot[soldot] coordinates{(2,3) (5,3) (8,3)};
%\legend{Exact solution}
\end{axis}
\end{tikzpicture}
\end{center}
%\caption{Exact solution for experiment 5}
\end{figure}
\end{minipage}
\centering
\begin{minipage}{0.3\textwidth}
\centering
\begin{figure}[H]
\begin{center}
\begin{tikzpicture}[scale=0.75]
\begin{axis}[xlabel={$\Omega$},
 legend pos=north east,
 legend style={font=\tiny,draw=none},
 title style={font=\bfseries},
 title={Experiment 4}]
\addplot[domain=0:2,blue] {(3*x/2)};
\addplot[domain=2:4,blue] {(x-2)};
\addplot[domain=4:5,blue] {(2)};
\addplot[domain=5:6,blue] {3*x-15};
\addplot[domain=6:8,blue] {(-3/4)*x+6};
\addplot[domain=8:10,blue] {0};
\draw[dotted] (axis cs: 2,0) -- (axis cs: 2,3);
\draw[dotted] (axis cs: 5,0) -- (axis cs: 5,2);
\draw[dotted] (axis cs: 6,3) -- (axis cs: 6,1.5);
\addplot[holdot] coordinates{(5,0) (2,0) (6,1.5)};
\addplot[soldot] coordinates{(5,2) (2,3) (6,3)};
%\legend{Exact solution}
\end{axis}
\end{tikzpicture}
\end{center}
%\caption{\scriptsize{Exact solution for experiment 4}}
\end{figure}
\end{minipage}
\caption{Exact solutions for the experiment}
\label{fig:exact_solutions_mu}
\end{figure}

\begin{table}[]
\centering
\caption{Results of the experiments with different values of $\mu$ }
\label{tab:varying_mu}
\begin{tabular}{|c|c|c|c|c|}
\hline
Experiment          & $\mu$	& iter 	& SSIM  	& F \\ \hline
\multirow{5}{*}{3}  & 0  	&  --  	& NaN		& NaN \\ \cline{2-5}
                    & 1e-6  &  13   & 0.9590	& 39.2468\\ \cline{2-5}
                    & 1e-8  &  12   & 0.9592	& 39.2434\\ \cline{2-5}
                    & 1e-10 &  12   & 0.9592	& 39.2434 \\ \cline{2-5}
                    & 1e-12 &  13   & 0.9592	& 39.2434\\ \hline
\multirow{5}{*}{4}  & 0  	& --   	& NaN		& NaN\\ \cline{2-5}
                    & 1e-6  & 12    & 0.9592	& 35.8368\\ \cline{2-5}
                    & 1e-8 	&  13   & 0.9594	& 35.8332\\ \cline{2-5}
                    & 1e-10 & 13    & 0.9594	& 35.8332\\ \cline{2-5}
                    & 1e-12 & 16    & 0.9594	& 35.8331\\ \cline{1-5}

\end{tabular}
\end{table}

\bibliographystyle{plain}
\bibliography{biblioTGVBurgers}
%\nocite{*}

\end{document}